\def\thm@space@setup{%
  \thm@preskip=0.45cm
  \thm@postskip=\thm@preskip 
}
\numberwithin{equation}{section}
\theoremstyle{plain}
\newtheorem{thm}{Theorem}[section]
\newtheorem{definition}[thm]{Definition}
\newtheorem{assumption}[thm]{Assumption}
\newtheorem{proposition}[thm]{Proposition}
\newtheorem{lemma}[thm]{Lemma}
\theoremstyle{remark}
\newtheorem{rem}{Remark}[section]
\newcommand{\Var}{\mathrm{Var}}
\newcommand{\Cov}{\mathrm{Cov}}
\newcommand\oneStepA{C}
\newcommand\oneStepB{A}
\newcommand\oneStepC{B}
\newcommand\iid{i.i.d.}
\setlist{  
	listparindent=\parindent,
	parsep=0pt,
}
\begin{document}

\begin{frontmatter}
\title{Scalable Monte Carlo Inference and Rescaled Local Asymptotic Normality}
\runtitle{Scalable MC Inference and RLAN}

\begin{aug}
\author{\fnms{Ning} \snm{Ning}\thanksref{e1}\ead[label=e1,mark]{patning@umich.edu}},
\author{\fnms{Edward L.} \snm{Ionides}\ead[label=e2]{ionides@umich.edu}}
\and
\author{\fnms{Ya'acov} \snm{Ritov}\ead[label=e3]{yritov@umich.edu}}

\address{Department of Statistics,
University of Michigan, Ann Arbor.
\printead{e1,e2,e3}}

\runauthor{Ning, Ionides and Ritov}


\end{aug}

\begin{abstract}
 In this paper, we generalize the property of local asymptotic normality (LAN) to an enlarged neighborhood, under the name of rescaled local asymptotic normality (RLAN). We obtain sufficient conditions for a regular parametric model to satisfy RLAN. We show that RLAN supports the construction of a statistically efficient estimator which maximizes a cubic approximation to the log-likelihood on this enlarged neighborhood. In the context of Monte Carlo inference, we find that this maximum cubic likelihood estimator can maintain its statistical efficiency in the presence of asymptotically increasing Monte Carlo error in likelihood evaluation. 
\end{abstract}
  
\begin{keyword}
\kwd{Monte Carlo}
\kwd{Local Asymptotic Normality}
\kwd{Big Data}
\kwd{Scalability}
\end{keyword}

\end{frontmatter}


\section{Introduction}
\label{sec:Introduction}
\subsection{Background and motivation}
\label{sec:Background_and_motivation}
The classical theory of asymptotics in statistics relies heavily
on certain local approximations to the logarithms of
likelihood ratios, where ``local" is meant to indicate that one looks at parameter values close to a point \citep{le2012asymptotics}. The classic theory of local asymptotic normality (LAN) of \citep{lecam86} concerns a ``local neighborhood" $\{\theta+t_n n^{-1/2}\}$ around a true parameter $\theta$ in an open subset $\Theta \subset \mathbb{R}$, where $t_n$ is a bounded constant and $n$ is the number of observations. 
We suppose the data are modeled as a real-valued sample $(Y_1,\cdots,Y_n)$, for $n\in \mathbb{N}$, from the probability distribution $P_{\theta}$ on the probability space $(\Omega, \mathcal{A}, \mathbb{\mu})$ where $\mu$ is a fixed $\sigma$-finite measure dominating $P_{\theta}$.
Let
\begin{align}
\label{eqn:density_def}
p(\theta)=p(\cdot\;;\theta)=\frac{dP_{\theta}}{d\mu}(\cdot), \quad\quad l(\theta)=\log p(\theta),
\end{align}
be the density and log-likelihood of $P_{\theta}$ respectively. 
Define the log-likelihood of $(Y_1,\cdots,Y_n)$ by
\begin{align}
\label{eqn:Sample_loglikelihood}
\mathbf{l}(\theta)=\sum_{i=1}^n l(Y_i\;;\theta).
\end{align}
The LAN states as follows:
	\begin{equation}
	\label{eq:lan0}
\mathbf{l}(\theta+t_n n^{-1/2})-\mathbf{l}(\theta)
= t_n  S_n(\theta) - \frac{1}{2}t_n^2\mathcal{I}(\theta) + o(1),
	\end{equation}
where $\mathcal{I}(\theta)$ is a finite positive constant, $S_n(\theta)\to N[0,\mathcal{I}(\theta)]$ in distribution, and $o(1)$ is an error term goes to zero in $P_{\theta}$ probability as $n$ goes to infinity.

We form a grid of equally spaced points with separation $n^{-1/2}$ over $\mathbb{R}$ and define $\theta_n^{\ast}$ as the midpoint of the interval into which $\widetilde{\theta}_n$ has fallen, where $\widetilde{\theta}_n$ is a uniformly $\sqrt{n}$-consistent estimator whose existence is established in Theorem $1$ on page $42$ of \cite{bickel1993efficient}.
Then $\theta_n^{\ast}$ is also uniformly $\sqrt{n}$-consistent. 
In practice, for $\theta^\ast_{j,n}=\theta^\ast_n+jn^{-1/2}$ where $j\in\{-1,0,1\}$, the quadratic polynomial of equation \eqref{eq:lan0} can be interpolated by $\big( \theta^\ast_{j,n}, \mathbf{l}(\theta^\ast_{j,n}) \big)_{j}$, and $S_n(\theta_n^{\ast})$ and ${\mathcal{I}}(\theta_n^{\ast})$ can be estimated. Here and in the sequel, we set $\theta^\ast_{0,n}=\theta^\ast_n$.
 The
one-step estimator $\widehat\theta^{\oneStepB}_n$ defined in \cite{bickel1993efficient}, using the estimated $S_n(\theta_n^{\ast})$ and ${\mathcal{I}}(\theta_n^{\ast})$,
\begin{align}
\label{eq:classical_onestep_est}
\widehat\theta^{\oneStepB}_n=&\theta_n^{\ast}+\sqrt{n}\times \frac{S_n(\theta_n^{\ast})}{{\mathcal{I}}(\theta_n^{\ast})},
\end{align}
maximizes the interpolated quadratic approximation to the log-likelihood.

The one-step estimator $\widehat\theta^{\oneStepB}_n$ can be generalized to $\widehat\theta^{\oneStepC}_n$, using the estimated $S_n(\theta_n^{\ast})$ and ${\mathcal{I}}(\theta_n^{\ast})$ generated through a quadratic fit to $\big( \theta^\ast_{j,n}, \mathbf{l}(\theta^\ast_{j,n}) \big)_{j\in \{-J, \cdots, J\}}$ with $J\geq 1$.
When the likelihood can be computed perfectly, there may be little reason to use $\widehat\theta^{\oneStepC}_n$ over $\widehat\theta^{\oneStepB}_n$.
However, when there is Monte Carlo uncertainty or other numerical error in the likelihood evaluation, then $\widehat\theta^{\oneStepC}_n$ with $J>1$ may be preferred.
Taking this idea a step further, we can construct a maximum smoothed likelihood estimator (MSLE), proposed in \citep{ionides2005maximum}, by maximizing a smooth curve fitted to the grid of log-likelihood evaluations $\big( \theta^\ast_{j,n}, \mathbf{l}(\theta^\ast_{j,n}) \big)_{j\in \{-J, \cdots, J\}}$. 
If the smoothing algorithm preserved quadratic functions then the MSLE is asymptotically equivalent to a one-step estimator under the LAN property, while behaving reasonably when the log-likelihood has a substantial deviation from a quadratic.  

The motivations of a rescaled LAN property arise from both the methodological side and the theoretical side: 
\begin{itemize}
\item Monte Carlo likelihood evaluations calculated by simulating from a model, are useful for constructing likelihood-based parameter estimates and confidence intervals, for complex models \citep{diggle84}. 
However, for large datasets any reasonable level of computational effort may result in a non-negligible numerical error in these likelihood evaluations, and the Monte Carlo methods come at the expense of ``poor scalability". 
Specifically, on the classical scale of $n^{-1/2}$, when the number of observations $n$ is large, 
the statistical signal in the likelihood function is asymptotically overcome by Monte Carlo noise in the likelihood evaluations, as the Monte Carlo variance growing with $n$. In line with \cite{ionides17} (page $4$), let the statistical error $\operatorname{SE_{stat}}$ stand for the uncertainty resulting from randomness in the data, viewed as a draw from a statistical model, and let the
Monte Carlo error $\operatorname{SE_{MC}}$ stand for the uncertainty resulting from implementing a Monte Carlo estimator of a statistical model. That is, in the context of Monte Carlo inference, we desire a general methodology to suffice $\operatorname{SE_{MC}^2}/\operatorname{SE_{stat}^2}\to 0$.

\item From a theoretical point of view, LAN has been found to hold in various situations other than regular independent identically distributed ({\iid}) parametric models, including semiparametric models \citep{bickel1996inference}, positive-recurrent Markov chains \citep{hopfner1990local}, stationary hidden Markov models \citep{bickel1996inference}, stochastic block models \citep{bickel2013asymptotic}, and regression models with long memory dependence \citep{hallin1999local}. There are very close linkages between LAN established in regular {\iid} parametric models and LAN established in models such as hidden Markov models and stochastic processes, for example Theorem $1.1$ in \citep{bickel1996inference}.
We anticipate that comparable results could be derived for RLAN.
To motivate future investigations of contexts where RLAN arises, it is necessary to rigorously establish RLAN as a worthwhile statistical property.
\end{itemize}
\begin{figure}[htbp!]
	\includegraphics[width=\textwidth]{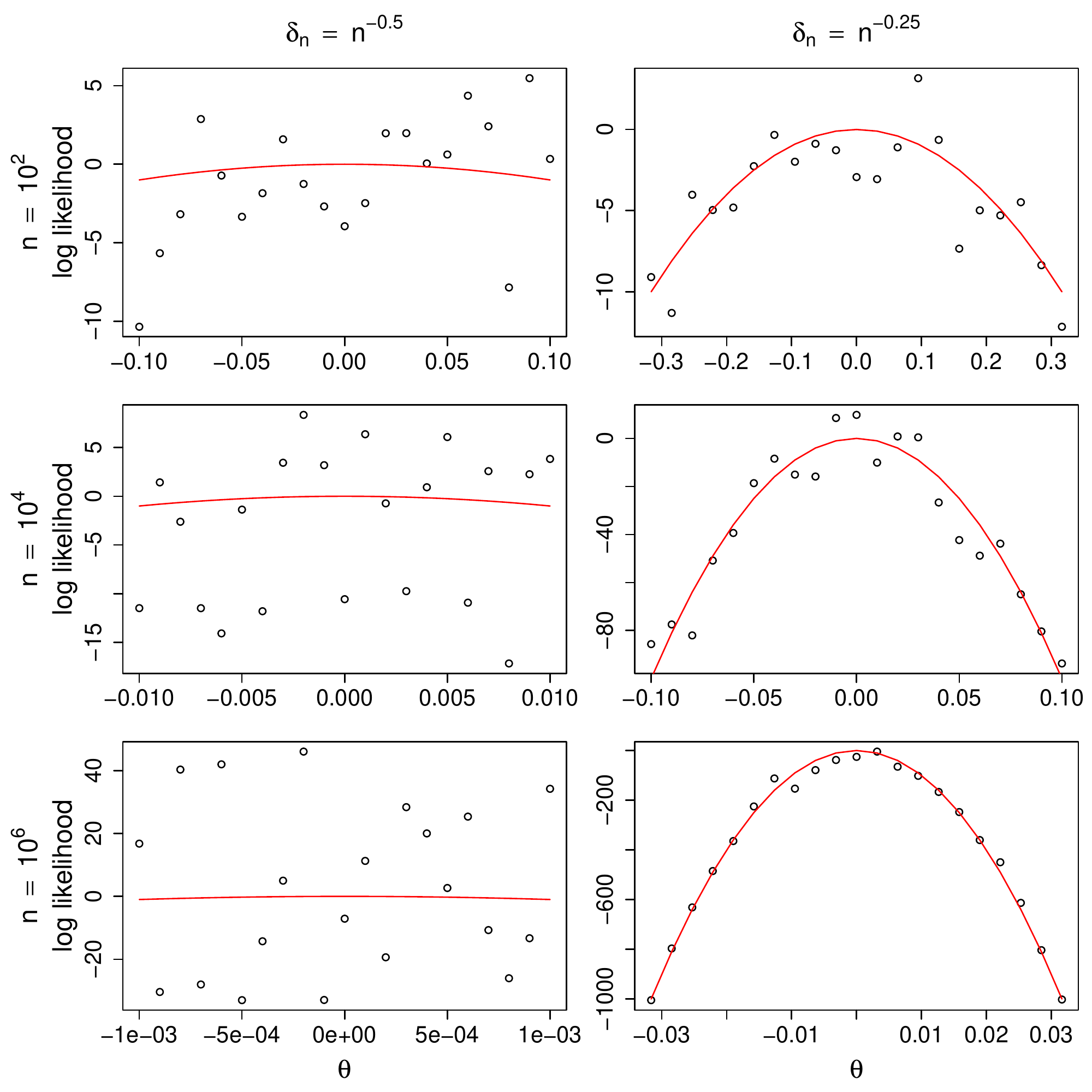}
	\caption{\footnotesize Illustration on the Monte Carlo estimation effects via different values of $n$ and $\delta_n$. The columns correspond to $\delta_n=n^{-0.5},n^{-0.25}$ and the rows correspond to $n=10^2,10^4,10^6$.
		The red solid curve is the log-likelihood given by $\mathbf{l}(\theta)=-n\theta^2$. The black circles are Monte Carlo log-likelihood evaluations, $\overline{\mathbf{l}}(\theta^\ast_{j,n})\sim N[\mathbf{l}(\theta^\ast_{j,n}),n/m]$ with a sample size $m=n^{1/2}$, evaluated at $21$ equally spaced values $\{\theta^\ast_{j,n}\}_j$ in the range $[-\delta_n,\delta_n]$.}
	\label{f:illustration}
\end{figure}

An idea on rescaling is to consider the $n^{-1/4}$ local neighborhoods instead, since the existence of a uniformly $\sqrt{n}$-consistent estimator implies the existence of a uniformly $n^{1/4}$-consistent estimator and then in the $n^{-1/4}$ local neighborhoods we have $\theta_n^{\ast}$ uniformly $n^{1/4}$-consistent. Figure \ref{f:illustration} verifies and illustrates the enlarged neighborhood idea, by showing a Gaussian likelihood function for $n$ observations that is evaluated by a Monte Carlo estimator having variance scaling linearly with $n/m$, where $m=\sqrt{n}$ is the number of Monte Carlo simulations per observation. Monte Carlo evaluations of the log-likelihood were conducted at a grid of points $\{\theta^\ast_{j,n} = \theta_n^\ast+j\delta_n\}_{j\in \{-10,-9,\dots,0,\dots,9,10\}}$, evenly spaced on $[-\delta_n,\delta_n]$, for $\delta_n=n^{-1/2}$ and $\delta_n=n^{-1/4}$ respectively. 
We see from Figure \ref{f:illustration} that on the classical scale of $n^{-1/2}$, when the number of observations $n$ is large, the statistical signal in the likelihood function is asymptotically overcome by Monte Carlo noise in the likelihood evaluations, as the Monte Carlo variance growing with $n$ even though there is just modest growth in the Monte Carlo effort $(m=n^{1/2}).$ However, on the $n^{-1/4}$ scale, the form of the likelihood surface is evident despite the growing Monte Carlo uncertainty. That is, the classical local $n^{-1/2}$ neighborhood does not provide a useful estimate in this limit, but the rescaled local $n^{-1/4}$ neighborhood enables a quadratic likelihood approximation to be successfully fitted. 
Then, we aim to establish the LAN property in the rescaled local $n^{-1/4}$ neighborhood, based on which, an estimator extended the classical one-step estimator in \citep{lecam86} and the MSLE in \citep{ionides2005maximum} can be designed.

\subsection{Our contributions}
\label{sec:Our_contributions}
The contributions of the paper are two-fold:
\begin{enumerate}
\item  We consider a new property:
\begin{definition} \label{def:RLAN}
	Let $\mathbf{P}:=\{P_{\theta}: \theta\in\Theta \}$ be a regular parametric model in the probability space $(\Omega, \mathcal{A}, \mathbb{\mu})$.
	We say that $\mathbf{P}$ has rescaled local asymptotic normality (RLAN) if  uniformly in $\theta \in K$ compact $\subset \Theta$ and $|t_n| \leq M$,
	\begin{equation}
	\label{eq:rlan}
	\mathbf{l}(\theta+t_n n^{-1/4})-\mathbf{l}(\theta)
	=
	n^{1/4} t_n  S_n(\theta) - \frac{1}{2} n^{1/2} t_n^2\mathcal{I}(\theta) + n^{1/4}t_n^3\mathcal{W}(\theta) + \mathcal{O}(1)
	\end{equation}
	and
	\begin{equation}
	\label{eq:lan}
	\mathbf{l}(\theta+t_n n^{-1/2})-\mathbf{l}(\theta)
	= t_n  S_n(\theta) - \frac{1}{2}t_n^2\mathcal{I}(\theta) + o(1),
	\end{equation}
	under $P_{\theta}$, with $\mathcal{I}(\theta)$ being a finite positive constant, $\mathcal{W}(\theta)$ being a finite constant, $S_n\to N[0,\mathcal{I}(\theta)]$ in distribution,  $\mathcal{O}(1)$ denoting an error term bounded in $P_{\theta}$ probability, and $o(1)$ denoting an error term converging to zero in $P_{\theta}$ probability.
\end{definition}
To develop the key ideas, we work in a one-dimensional parameter space.
However, the ideas naturally generalize to $\Theta \subset \mathbb{R}^d$ for $d\ge 1$.
The widely studied property of LAN \citep{lecam86,le2012asymptotics} is defined by \eqref{eq:lan}, so RLAN implies LAN.
The LAN property asserts a quadratic approximation to the log-likelihood function in a neighborhood with scale $n^{-1/2}$, whereas RLAN additionally asserts a cubic approximation on a  $n^{-1/4}$ scale.
	In Section~\ref{sec:rlan:rpm}, we present sufficient conditions for a sequence of {\iid} random variables to satisfy RLAN. 
	Complex dependence structures fall outside the  {\iid} theory of Section~\ref{sec:rlan:rpm}, except in the situation where there is also replication.
	Panel time series analysis via mechanistic models is one situation where  {\iid} replication arises together with model complexity requiring Monte Carlo approaches \citep{breto19,ranjeva19,ranjeva17}.\\

\item  Suppose RLAN (Definition \ref{def:RLAN}) holds. Then, with $\theta^\ast_{j,n} = \theta_n^\ast+jn^{-1/4}$ for $j$ in a finite set $\mathcal{J}$, we can write
\begin{equation}
\label{eqn:MCLEestimator_likeli}
\mathbf{l}(\theta^\ast_{j,n})= \beta_0+\beta_1 (j n^{-1/4}) + \beta_2 (j n^{-1/4})^2 + \beta_3 (j n^{-1/4})^3 + \epsilon_{j,n},
\end{equation}
where $\beta_1=\mathcal{O}(n^{1/2})$, $\beta_2=\mathcal{O}(n)$ and $\beta_3=\mathcal{O}(n)$, and $\epsilon_{j,n}=\mathcal{O}(1)$. In practice, 
the cubic polynomial of equation \eqref{eqn:MCLEestimator_likeli} can be interpolated by $\big( \theta^\ast_{j,n}, \mathbf{l}(\theta^\ast_{j,n}) \big)_{j\in \mathcal{J}}$, and $\{\beta_{\iota}\}_{\iota \in \{1,2,3\}}$ can be estimated.
Based on the linear least squares estimated $\{\beta_{\iota}\}_{\iota \in \{1,2,3\}}$, we can define 
 the maximum cubic log-likelihood estimator (MCLE) when it is finite, as
\begin{equation}
\label{eqn:MCLEestimator_def}
\widehat\theta^{\text{MCLE}}_n = \theta^\ast_n+n^{-1/4}
\underset{\chi\in\mathbb{R}}{\arg \max}\big\{\beta_1 (\chi n^{-1/4})+ \beta_2 (\chi n^{-1/4})^2 + \beta_3 (\chi n^{-1/4})^3 \big\}.
\end{equation}
The MCLE defined above is general, while in this paper we apply it in the context of Monte Carlo inference with {\iid} data samples, under the situation that one does not have access to the likelihood evaluation $\mathbf{l}(\theta^\ast_{j,n})$ but instead can obtain the Monte Carlo likelihood evaluation $\overline{\mathbf{l}}(\theta^\ast_{j,n})$. \\
%
%

We firstly illustrate how $\overline{\mathbf{l}}(\theta^\ast_{j,n})$ may be generated. We suppose the data are modeled as an {\iid} sequence $Y_1,\dots,Y_n$ drawn from a density
\begin{equation}
p(y\;;\theta)=p_Y(y\; ; \theta) = \int p_{Y|X}(y|x \; ; \theta)p_{X}(x\; ; \theta)\, dx.
\end{equation}
For each $Y_i$ and $\theta^\ast_{j,n}$, independent Monte Carlo samples $(X_{i,j}^{(1)},\cdots,X_{i,j}^{(m)})$ for $m\in \mathbb{N}$ are generated from an appropriate probability density function $q(\cdot\;;\theta^\ast_{j,n})$.  
Then, we approximate $p(Y_i\;; \theta^\ast_{j,n})$ with $\overline{p}(Y_i\;;\theta^\ast_{j,n})$
using an importance sampling evaluator,
$$\overline{p}(Y_i\;; \theta^\ast_{j,n})=\frac{1}{m}\sum_{\tau=1}^m p_{Y|X}(Y_i|X_{i,j}^{(\tau)}\;; \theta^\ast_{j,n})\frac{p_X(X_{i,j}^{(\tau)}\;; \theta^\ast_{j,n})}{q(X_{i,j}^{(\tau)}\;; \theta^\ast_{j,n})},$$
which is unbiased by construction.
We construct 
\begin{equation}
\label{eqn:MCestimator}
\overline{\mathbf{l}}(\theta^\ast_{j,n})=\sum_{i=1}^n\ln\overline{p}(Y_i\;; \theta^\ast_{j,n})
\end{equation}
as the estimated log-likelihood.\\

Recalling that $n$ is the number of observations, suppose that $m=m(n)$ is the number of Monte Carlo simulations per observation, and take $\mathcal{O}(\sqrt{n})\ll m(n)\ll \mathcal{O}(n)$.
Then the Monte Carlo log-likelihood theory gives that
\begin{equation}
\label{eqn:MCLEestimator_likeli2}
\overline{\mathbf{l}}(\theta^\ast_{j,n})=\mathbf{l}(\theta^\ast_{j,n})+\gamma(\theta^\ast_{j,n})+\widetilde{\epsilon}_{j,n},
\end{equation}
where $\widetilde{\epsilon}_{j,n}$ is {\iid} such that $\frac{m}{n}\widetilde{\epsilon}_{j,n}$ convergences in distribution to a normal distribution with mean zero and positive finite variance, 
and the bias term $\gamma(\theta^\ast_{j,n})$ satisfies 
\begin{equation}
\label{meta3}
\gamma(\theta^\ast_{j,n})=\gamma(\theta^\ast_{n})+ C_{\gamma}\frac{n}{m} j n^{-1/4}\big(1 +o(1)\big),
\end{equation}
where $C_{\gamma}$ is a finite constant. Plugging equation \eqref{eqn:MCLEestimator_likeli} in equation \eqref{eqn:MCLEestimator_likeli2}, we can obtain that 
\begin{align*}
\overline{\mathbf{l}}(\theta^\ast_{j,n})=&\beta_0+\beta_1 (j n^{-1/4}) + \beta_2 (j n^{-1/4})^2 + \beta_3 (j n^{-1/4})^3 + \epsilon_{j,n}+\gamma(\theta^\ast_{j,n})+\widetilde{\epsilon}_{j,n}.
\end{align*}
Organizing the terms in the above equation, we have the Monte Carlo meta model 
\begin{equation}
\label{meta1}
\begin{split}
\overline{\mathbf{l}}(\theta^\ast_{j,n})= \overline{\beta}_0+\overline{\beta}_1 (j n^{-1/4}) + \overline{\beta}_2 (j n^{-1/4})^2 + \overline{\beta}_3 (j n^{-1/4})^3+\overline{\epsilon}_{j,n},
\end{split}
\end{equation}
where $\theta^\ast_{j,n} = \theta_n^\ast+jn^{-1/4}$ for $j$ in a finite set $\mathcal{J}$, $\overline{\beta}_1=\mathcal{O}(n^{1/2})$, $\overline{\beta}_2=\mathcal{O}(n)$, $\overline{\beta}_3=\mathcal{O}(n)$, and
$\overline{\epsilon}_{j,n}$ is {\iid} such that $\frac{m}{n}\overline{\epsilon}_{j,n}$ convergences in distribution to a normal distribution having mean zero and positive finite variance.
\\

The proposed general methodology takes advantage of asymptotic properties of the likelihood function in an $n^{-1/4}$ neighborhood of the true parameter value. 
In Section~\ref{sec:mc}, we will see that $\widehat\theta^{\text{MCLE}}_n$ is efficient with the desired property that $\operatorname{SE_{MC}^2}/\operatorname{SE_{stat}^2}\to 0$ as the number of data samples $n\to\infty$. The statistically efficient simulation-based likelihood inference is achieved with a computational budget of size essentially $n^{3/2}$ for a dataset of $n$ observations and $\sqrt{n}$ Monte Carlo simulations per observation. In sum, despite substantial Monte Carlo uncertainties involved in the proposed general Monte Carlo based method, MCLE is efficient and able to  scale properly. 
 The proposed methodology sheds light on tackling ``poor scalability" issues in related Monte Carlo based approaches, such as the Monte Carlo adjusted profile methodology of \cite{ionides17} which has been used in various scientific studies \citep{pons-salort18,ranjeva19,ranjeva17,smith17}. However, the extension to profile likelihood estimation, analogous to the LAN-based profile likelihood theory of \citep{murphy00}, is beyond the scope of this paper.
 \end{enumerate}

\subsection{Organization of the paper}
The rest of the paper proceeds as follows:
Section~\ref{sec:rlan:rpm} derives the RLAN property in the context of a regular parametric model, leading to a theorem which is proved in Section~\ref{sec:rlan:rpm:proof};
In Section~\ref{sec:mc}, based on the RLAN property, we investigate the performance of the proposed MCLE in the context of Monte Carlo inference. The notations used throughout this paper are listed in Table~\ref{tab:TableOfNotation}.

\section{RLAN for regular parametric models} \label{sec:rlan:rpm}

In this section, we show that parametric models with sufficient regularity enjoy the RLAN property, for $n$ {\iid} observations.

\subsection{Model setup}
We suppose the data are modeled as a real-valued {\iid} sample $(Y_1,\cdots,Y_n)$, for $n\in \mathbb{N}$, from the probability distribution $P_{\theta}$ on the probability space $(\Omega, \mathcal{A}, \mathbb{\mu})$ where $\mu$ is a fixed $\sigma$-finite measure dominating $P_{\theta}$.
We seek to infer the unknown ``true'' parameter $\theta$ which is situated in an open subset $\Theta \subset \mathbb{R}$. 
We suppose the parameterization $\theta\rightarrow P_{\theta}$ has a density and log-likelihood which can be written as
$$p(\theta)=p(\cdot\;;\theta)=\frac{dP_{\theta}}{d\mu}(\cdot), \quad\quad l(\theta)=\log p(\theta).$$
For $\mathbf{P}=\{P_{\theta}: \theta\in\Theta \}$ being the set of all the probability measures induced by the parameter $\theta$ in the whole parameter set $\Theta$, we metrize $\mathbf{P}$ with the variational distance. Let $v: \mathbf{P}\rightarrow \mathbb{R}$ be a Euclidean parameter, and suppose that $v$ can be identified with the parametric function $q:\Theta\rightarrow \mathbb{R}$ defined by
$$q(\theta)=v(P_{\theta}).$$
Let $\left\lVert \cdot \right\rVert$ stand for the Hilbert norm in $L_2(\mu)$, i.e., $\left\lVert f \right\rVert^2=\int f^2 d\mu$.
It is convenient to view $\mathbf{P}$ as a subset of $L_2(\mu)$ via the embedding 
\begin{equation}
\label{eqn:density_def}
p(\cdot\;;\theta)\rightarrow s(\cdot\;;\theta):=\sqrt{p(\cdot\;;\theta)}.
\end{equation}
where $p(\cdot\;;\theta)$ is the density defined in equation \eqref{eqn:density_def}.

The generalization from the classical theory of LAN in the scale $\mathcal{O}(n^{-1/2})$ to RLAN in the scale $\mathcal{O}(n^{-1/4})$, requires additional smoothness assumptions. We start with the following definition.
\begin{definition}
\label{def:2nd_smooth_point}
We say that $\theta_0$ is a fourth-order regular point of the parametrization $\theta\rightarrow P_{\theta}$, if $\theta_0$ is an interior point of $\Theta$, and 
\begin{enumerate}
\item The map $\theta\rightarrow s(\theta)$ from $\Theta$ to $\mathcal{L}_2(\mu)$ is fourth-order differentiable at $\theta_0$: there exist first-order derivative $\dot{s}(\theta_0)$, second-order derivative $\ddot{s}(\theta_0)$, third-order derivative $\dddot{s}(\theta_0)$, and fourth-order derivative $\ddddot{s}(\theta_0)$
of elements of $\mathcal{L}_2(\mu)$ such that
\begin{align*}
&\left\lVert \frac{s(\theta_0+\delta_n t_n)-s(\theta_0)- \dot{s}(\theta_0) \delta_n t_n }{\delta_n^2} - \frac{1}{2}\ddot{s}(\theta_0) t_n^2 \right\rVert \rightarrow 0,\\
&\left\lVert \frac{s(\theta_0+\delta_n t_n)-s(\theta_0)- \dot{s}(\theta_0) \delta_n t_n -\frac{1}{2}\ddot{s}(\theta_0) \delta_n^2t_n^2}{\delta_n^3} - \frac{1}{3!}\dddot{s}(\theta_0)t_n^3 \right\rVert \rightarrow 0,\\
&\left\lVert \frac{s(\theta_0+\delta_n t_n)-s(\theta_0)- \dot{s}(\theta_0) \delta_n t_n -\frac{1}{2}\ddot{s}(\theta_0) \delta_n^2t_n^2-\frac{1}{3!}\dddot{s}(\theta_0)\delta_n^3 t_n^3}{\delta_n^4} - \frac{1}{4!}\ddddot{s}(\theta_0)t_n^4 \right\rVert \rightarrow 0,
\end{align*}
for any $\delta_n\rightarrow 0$ and $t_n$ bounded.
\item The variable $\dot{\mathbf{I}}(\theta):=2\frac{\dot{s}(\theta)}{s(\theta)}\mathbbm{1}_{\{s(\theta)>0\}}$ has non-zero second moment ${\mathcal{I}}(\theta):=E_{\theta}[\dot{\mathbf{I}}(\theta)]^2$ and non-zero fourth moment.
\item The variable $\ddot{\mathbf{I}}(\theta):=2\frac{\ddot{s}(\theta)}{s(\theta)}\mathbbm{1}_{\{s(\theta)>0\}}$ has non-zero second moment.
\end{enumerate} 
\end{definition}

\begin{assumption}
\label{model_assumption}
We assume the following:
\begin{enumerate} 
\item Every point of $\Theta$ is a fourth-order regular point.
\item The map $\theta \rightarrow \ddddot{s}(\theta)$ is continuous from $\Theta$ to $\mathcal{L}_2(\mu)$.
\item Define $\dddot{\mathbf{I}}(\theta):=2\frac{\dddot{s}(\theta)}{s(\theta)}\mathbbm{1}_{\{s(\theta)>0\}}$ and $\ddddot{\mathbf{I}}(\theta):=2\frac{\ddddot{s}(\theta)}{s(\theta)}\mathbbm{1}_{\{s(\theta)>0\}}$. We have
$$E_{\theta}|\dot{\mathbf{I}}(\theta)|^6<\infty, \quad E_{\theta}|\ddot{\mathbf{I}}(\theta)|^3<\infty,\quad E_{\theta}|\dddot{\mathbf{I}}(\theta)|^2<\infty, \quad E_{\theta}|\ddddot{\mathbf{I}}(\theta)|<\infty.$$
\end{enumerate} 
\end{assumption}

\begin{rem}
We have the following comments regarding Assumption \ref{model_assumption}:
\begin{enumerate}
	\item The conditions $E_{\theta}[\dot{\mathbf{I}}(\theta)]^2\neq 0$, $E_{\theta}[\dot{\mathbf{I}}(\theta)]^4\neq 0$ and $E_{\theta}[\ddot{\mathbf{I}}(\theta)]^2\neq 0$ hold unless for random variables that are zero almost sure. 
	\item The condition ($3$) in Assumption \ref{model_assumption} holds for all bounded random variables, such as truncated normal distributed random variables and finite discrete distributed random variables.
	\item The condition ($3$) in Assumption \ref{model_assumption} holds for some unbounded random variables at least, such as the centered normal distributed random variable with $\theta\in\{1,\sqrt{2},\sqrt{3}\}$, whose probability density function is given by
	$p(y\;;\theta)=\frac{1}{\sqrt{2\pi\theta^2}}e^{-\frac{y^2}{2\theta^2}}.$
	\item In this section, we suppose the data are modeled as a real-valued {\iid} sample $(Y_1,\cdots,Y_n)$. Assumption \ref{model_assumption} still may apply on stochastic process with very desired conditions, such as the basic Ornstein-Uhlenbeck process (\cite{uhlenbeck1930theory}) that is stationary, Gaussian, and Markovian, evolving as  
	$dX_t=-\rho X_tdt+\sigma dW_t$,
	where $\rho$ and $\sigma$ are finite constants, and $W_t$ is the standard Brownian motion with unit variance parameter. Its stationary distribution is the normal distribution with mean $0$ and variance $\theta^2=\frac{\sigma^2}{2\rho}$.
\end{enumerate}
\end{rem}

%

\subsection{The main result}
Define 
\begin{align}
\label{eqn:S_n_theta}
S_n(\theta)=&\frac{1}{\sqrt{n}}\sum_{i=1}^n \dot{\mathbf{I}}(Y_i\;;\theta),\\
\label{eqn:V_n_theta}
V_n(\theta)=&\frac{1}{\sqrt{n}}\sum_{i=1}^n \left[ \ddot{\mathbf{I}}(Y_i \;;\theta) -E_{\theta}\ddot{\mathbf{I}}(\theta) \right],\\
\label{eqn:U_n_theta}
U_n(\theta)=&\frac{1}{\sqrt{n}}\sum_{i=1}^n \left[\dot{\mathbf{I}}^2(Y_i\;;\theta) -{\mathcal{I}}(\theta)\right].
\end{align}
We have the following theorem for RLAN, whose rigorous proof is provided in Section \ref{sec:Proof_of_Main_Theorem}.


\begin{thm}
\label{thm:SLAN}
Suppose that $\mathbf{P}=\{P_{\theta}: \theta\in\Theta \}$ is a regular parametric model satisfying Assumption \ref{model_assumption}. When $\delta_n=\mathcal{O}(n^{-1/4})$, write
\begin{align*}
&\hspace*{-0.3cm} \mathbf{l}(\theta+\delta_n t_n)-\mathbf{l}(\theta)\\
=&t_n \left\{\sqrt{n}\delta_nS_n(\theta) \right\}+t_n^2 \left\{\sqrt{n}\delta_n^2\left[\frac{1}{2}V_n(\theta)-\frac{1}{4}U_n(\theta)\right]  -\frac{1}{2}n\delta_n^2{\mathcal{I}}(\theta)\right\}\\
&+t_n^3 \left\{n\delta_n^3 \left[\frac{1}{12}E_{\theta}[\dot{\mathbf{I}}^3(\theta)]-\frac{1}{8}E_{\theta}[\ddot{\mathbf{I}}(\theta) \dot{\mathbf{I}}(\theta)]+\frac{1}{6}E_{\theta}\left[\dddot{\mathbf{I}}(\theta)\right]\right]\right\}\\
&+t_n^4 \left\{ n \delta_n^4 \left[-\frac{1}{32}E_{\theta}[\dot{\mathbf{I}}^4(\theta)]-\frac{1}{16}E_{\theta}[\ddot{\mathbf{I}}(\theta)]^2-\frac{1}{12} E_{\theta}[\dddot{\mathbf{I}}(\theta)\dot{\mathbf{I}}(\theta)]+\frac{1}{24}E_{\theta}\left[\ddddot{\mathbf{I}}(\theta)\right]\right]\right\}\\
&+R_n(\theta,t_n).
\end{align*} 
Then uniformly in $\theta \in K$ compact $\subset \Theta$ and $|t_n| \leq M$, one has 
$R_n(\theta,t_n)\xrightarrow[]{p} 0$ in $P_{\theta}$ probability, and in the weak topology
$$\mathbf{L}_{\theta}\left(S_n(\theta)\right)\rightarrow N(0,{\mathcal{I}}(\theta)),$$
$$
\mathbf{L}_{\theta}\left(V_n(\theta)\right)\rightarrow N \left(0,\Var_{\theta}[\ddot{\mathbf{I}}(\theta) ]\right),$$
$$
\mathbf{L}_{\theta}\left(U_n(\theta)\right)\rightarrow N \left(0,\Var_{\theta}[\dot{\mathbf{I}}^2(\theta)]\right),$$
where $N(\mu,\sigma^2)$ is the normal distribution with mean $\mu$ and variance $\sigma^2$, and $\mathbf{L}_{\theta}$ is the law under $\theta$.
\end{thm}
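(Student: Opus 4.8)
The plan is to reduce the statement to the analysis of a sum of log square-root-density ratios and then match the resulting Taylor expansion, term by term, against the claimed polynomial in $t_n$. Writing $\Delta_n=\delta_n t_n$ and
\[
W_{n,i}=\frac{s(Y_i\,;\theta+\Delta_n)}{s(Y_i\,;\theta)}-1 ,
\]
the identity $p=s^2$ gives $\mathbf{l}(\theta+\Delta_n)-\mathbf{l}(\theta)=\sum_{i=1}^n 2\log(1+W_{n,i})$. The first step is to expand $s(\theta+\Delta_n)$ to fourth order in $L_2(\mu)$ using the fourth-order regularity of Definition~\ref{def:2nd_smooth_point}; dividing by $s(\theta)$ and recalling $\dot{\mathbf{I}}=2\dot{s}/s$, $\ddot{\mathbf{I}}=2\ddot{s}/s$, $\dddot{\mathbf{I}}=2\dddot{s}/s$, $\ddddot{\mathbf{I}}=2\ddddot{s}/s$, this represents $W_{n,i}$ as $\tfrac12\dot{\mathbf{I}}\Delta_n+\tfrac14\ddot{\mathbf{I}}\Delta_n^2+\tfrac1{12}\dddot{\mathbf{I}}\Delta_n^3+\tfrac1{48}\ddddot{\mathbf{I}}\Delta_n^4$ plus an $L_2(\mu)$ remainder $r_{n}$ with $\lVert r_{n}\rVert=o(\delta_n^4)$.

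Second, I would insert this into $2\log(1+W)=2W-W^2+\tfrac23W^3-\tfrac12W^4+O(W^5)$ and collect powers of $\Delta_n$. Summing over $i$ and recalling \eqref{eqn:S_n_theta}--\eqref{eqn:U_n_theta}, the coefficient of $\Delta_n$ produces $\sqrt{n}\,S_n(\theta)$, while the coefficient of $\Delta_n^2$ splits into a centred part $\sqrt{n}\,(\tfrac12V_n-\tfrac14U_n)$ and a mean part $n\,(\tfrac12E_\theta\ddot{\mathbf{I}}-\tfrac14\mathcal{I})$. Here the key simplification comes from differentiating $\int s^2\,d\mu=1$: the first derivative gives $E_\theta\dot{\mathbf{I}}=0$, the second gives $E_\theta\ddot{\mathbf{I}}=-\tfrac12\mathcal{I}$, so the mean part collapses to $-\tfrac12 n\mathcal{I}$, matching the quadratic term. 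The cubic and quartic coefficients are handled by the law of large numbers: their fluctuations enter multiplied by $\delta_n^3\sqrt{n}$ and $\delta_n^4\sqrt{n}$, both $o(1)$ when $\delta_n=\mathcal{O}(n^{-1/4})$, so only the expectations of the corresponding polynomials in the $\mathbf{I}$-variables survive, yielding the stated deterministic coefficients (the identities obtained by taking the third and fourth derivatives of $\int s^2\,d\mu=1$ can be used to rewrite these in the displayed form).

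The three limit laws then follow from the multivariate Lindeberg--L\'evy central limit theorem applied to the {\iid} triple $\big(\dot{\mathbf{I}},\,\ddot{\mathbf{I}}-E_\theta\ddot{\mathbf{I}},\,\dot{\mathbf{I}}^2-\mathcal{I}\big)(Y_i\,;\theta)$, whose coordinates are mean-zero with finite variances by the moment conditions of Assumption~\ref{model_assumption}(3); in particular $\Var_\theta\dot{\mathbf{I}}=\mathcal{I}$ gives the stated variance of $S_n$.

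The hard part will be controlling $R_n(\theta,t_n)$ and establishing uniformity. The Taylor remainder $r_{n}$ of $s$ is handled by the bridge $E_\theta[(r_n/s)^2]=\int r_n^2\,d\mu=\lVert r_n\rVert^2$, so its linear contribution to $\sum_i$ has mean bounded by $n\lVert r_n\rVert\lVert s\rVert=o(n\delta_n^4)=o(1)$ and variance $o(1)$, with the cross terms against the polynomial part controlled similarly. The tail of the logarithmic expansion beyond $W^4$ requires a Le~Cam-type truncation on the event $\{|W_{n,i}|>\tfrac12\}$, whose probability and contribution are rendered negligible by the sixth-, third-, second- and first-moment bounds in Assumption~\ref{model_assumption}(3); these bounds are exactly what force $\sum_i W_{n,i}^5$ and the higher remainders to be $o_p(1)$. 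Finally, uniformity over $\theta\in K$ compact and $|t_n|\le M$ is obtained by upgrading each $o(1)/o_p(1)$ estimate to a uniform one, using the $L_2(\mu)$-continuity of $\theta\mapsto\ddddot{s}(\theta)$ (Assumption~\ref{model_assumption}(2)) together with continuity of the relevant moments and a compactness argument. I expect the delicate interchange between the $L_2(\mu)$-differentiability of $s$, a statement about functions, and the in-probability behaviour of the random sums $\sum_i$ evaluated at the sample to be the principal technical obstacle.
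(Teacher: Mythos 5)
Your proposal follows essentially the same route as the paper's proof: an $L_2(\mu)$ fourth-order Taylor expansion of $\theta\mapsto s(\theta)$, a Le~Cam truncation on the event where the ratio increments are small (your $\{|W_{n,i}|>\tfrac12\}$ is the complement of the paper's $A_n$), term-by-term collection of powers of $\delta_n t_n$ with the centred parts handled by uniform CLTs and the deterministic parts by uniform laws of large numbers, the bridge $E_\theta[(r_n/s)^2]=\lVert r_n\rVert^2$ for the remainder, and the identities from differentiating $\int s^2\,d\mu=1$ to collapse the quadratic coefficient to $-\tfrac12\mathcal{I}(\theta)$. The only cosmetic difference is that the paper splits off the linear term by working with $T_n=W_n-\tfrac12\delta_nt_n\dot{\mathbf{I}}(\theta)$ and then bounds the cross terms $(T_{ni})^{\alpha}(\delta_nt_n\dot{\mathbf{I}})^{\beta}$ separately, whereas you expand $W_{n,i}$ as a single polynomial; the substance is the same.
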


\begin{rem}
\label{rem:LAN_RLAN}
The equation in Theorem \ref{thm:SLAN} with $\delta_n=\mathcal{O}(n^{-1/2})$ instead of $\delta_n=\mathcal{O}(n^{-1/4})$ implies the
classical LAN result (Proposition $2$ on page $16$ of \cite{bickel1993efficient}), which can be seen as follows:
\begin{enumerate}
\item For the $t_n$ term, 
$$\left\{\sqrt{n}\delta_nS_n(\theta) \right\}=\frac{2}{\sqrt{n}}\sum_{i=1}^n \frac{\dot{s}(Y_i\;;\theta)}{s(Y_i\;;\theta)}\mathbbm{1}_{\{s(Y_i\;;\theta)>0\}}.$$ 

\item For the $t_n^2$ term, 
\begin{align*}
&\hspace*{-1cm}\left\{\sqrt{n}\delta_n^2\left[\frac{1}{2}V_n(\theta)-\frac{1}{4}U_n(\theta)\right]- \frac{1}{2}n\delta_n^2 {\mathcal{I}}(\theta)\right\}\\
=& \frac{1}{2n}\sum_{i=1}^n \left[ \ddot{\mathbf{I}}(Y_i \;;\theta) -E_{\theta}\ddot{\mathbf{I}}(\theta) \right]-\frac{1}{4n}\sum_{i=1}^n \left[\dot{\mathbf{I}}^2(Y_i\;;\theta) -{\mathcal{I}}(\theta)\right]-\frac{1}{2}{\mathcal{I}}(\theta).
\end{align*}
By Chung's uniform strong law of large number and Lemma \ref{lem:ddot_I2_ui}, one can obtain that
$$\frac{1}{n}\sum_{i=1}^n \left[ \ddot{\mathbf{I}}(Y_i \;;\theta) -E_{\theta}[\ddot{\mathbf{I}}(\theta)]\right]\xrightarrow[]{a.s.} 0,$$
and 
$$\frac{1}{n}\sum_{i=1}^n \left[\dot{\mathbf{I}}^2(Y_i\;;\theta) -{\mathcal{I}}(\theta)\right] \xrightarrow[]{a.s.} 0,$$
uniformly in $\theta \in K$ compact $\subset \Theta$ and $|t_n| \leq M$.
Then the $t_n^2$ coefficient is asymptotically equivalent to 
$$-\frac{1}{2}{\mathcal{I}}(\theta)=-2E_{\theta}\left[ \frac{\dot{s}(Y_i\;;\theta)}{s(Y_i\;;\theta)}\mathbbm{1}_{\{s(Y_i\;;\theta)>0\}}\right]^2.$$

\item For the $t_n^3$ term, since $E_{\theta}[\dot{\mathbf{I}}^3(\theta)]$,  $E_{\theta}[\ddot{\mathbf{I}}(\theta) \dot{\mathbf{I}}(\theta)]$ and $E_{\theta}\left[\dddot{\mathbf{I}}(\theta)\right]$ are finite constants, and $n \delta_n^3 \rightarrow 0$ as $n\rightarrow \infty$,
$$\left\{n\delta_n^3 \left[\frac{1}{12}E_{\theta}[\dot{\mathbf{I}}^3(\theta)]-\frac{1}{8}E_{\theta}[\ddot{\mathbf{I}}(\theta) \dot{\mathbf{I}}(\theta)]+\frac{1}{6}E_{\theta}\left[\dddot{\mathbf{I}}(\theta)\right]\right]\right\}\rightarrow 0.$$

\item Similarly, for the $t_n^4$ term, since
$E_{\theta}[\dot{\mathbf{I}}^4(\theta)]$, $E_{\theta}[\ddot{\mathbf{I}}(\theta)]^2$,  $E_{\theta}\left[\ddddot{\mathbf{I}}(\theta)\right]$ and  $E_{\theta}[\dddot{\mathbf{I}}(\theta)\dot{\mathbf{I}}(\theta)]$ are finite constants, and $n \delta_n^4 \rightarrow 0$ as $n\rightarrow \infty$,
$$\left\{n \delta_n^4 \left[-\frac{1}{32}E_{\theta}[\dot{\mathbf{I}}^4(\theta)]-\frac{1}{16}E_{\theta}[\ddot{\mathbf{I}}(\theta)]^2-\frac{1}{12} E_{\theta}[\dddot{\mathbf{I}}(\theta)\dot{\mathbf{I}}(\theta)]+\frac{1}{24}E_{\theta}\left[\ddddot{\mathbf{I}}(\theta)\right]\right]\right\}\rightarrow 0.$$
\end{enumerate}
\end{rem}

\begin{rem}
Theorem \ref{thm:SLAN} implies the RLAN property in Definition \ref{def:RLAN}. The case $\delta_n=n^{-1/2}$ is already covered in Remark \ref{rem:LAN_RLAN}.
When $\delta_n=n^{-1/4}$, the terms $$t_n^2 \left\{\sqrt{n}\delta_n^2\left[\frac{1}{2}V_n(\theta)-\frac{1}{4}U_n(\theta)\right]\right\}= \mathcal{O}(1)$$ 
and 
$$t_n^4 \left\{n \delta_n^4 \left[-\frac{1}{32}E_{\theta}[\dot{\mathbf{I}}^4(\theta)]-\frac{1}{16}E_{\theta}[\ddot{\mathbf{I}}(\theta)]^2-\frac{1}{12} E_{\theta}[\dddot{\mathbf{I}}(\theta)\dot{\mathbf{I}}(\theta)]+\frac{1}{24}E_{\theta}\left[\ddddot{\mathbf{I}}(\theta)\right]\right]\right\}= \mathcal{O}(1).$$ 
Hence, we have, uniformly in $\theta \in K$ compact $\subset \Theta$ and $|t_n| \leq M$,
\begin{equation} \label{eq:rlan:revisited}
\mathbf{l}(\theta+\delta_n t_n)-\mathbf{l}(\theta)
=
\sqrt{n} \delta_n t_n  S_n - \frac{1}{2} n \delta_n^2 t_n^2\mathcal{I} + n\delta_n^3t_n^3\mathcal{W} + \mathcal{O}(1),
\end{equation}
where $\mathcal{I}={\mathcal{I}}(\theta)$ is a finite positive constant, $S_n\to N[0,\mathcal{I}]$ in distribution, and
$$\mathcal{W}=\left[\frac{1}{12}E_{\theta}[\dot{\mathbf{I}}^3(\theta)]-\frac{1}{8}E_{\theta}[\ddot{\mathbf{I}}(\theta) \dot{\mathbf{I}}(\theta)]+\frac{1}{6}E_{\theta}\left[\dddot{\mathbf{I}}(\theta)\right]\right]$$ is a finite constant.
  
\end{rem}

\section{Developing a proof of Theorem \ref{thm:SLAN}}
\label{sec:rlan:rpm:proof}
In this section, we work toward a proof of Theorem \ref{thm:SLAN}. Throughout this section, we suppose Assumption \ref{model_assumption} holds and consider $\delta_n=\mathcal{O}(n^{-1/4})$. 
We firstly use a truncation method on a Taylor series expansion of  $\mathbf{l}(\theta+\delta_n t_n)-\mathbf{l}(\theta)$ in Section \ref{sec:truncation}, and then conduct preliminary analysis in bounding $\mathbf{l}(\theta+\delta_n t_n)-\mathbf{l}(\theta)$ in Section \ref{sec:Preliminary_Analysis}, both of which prepare for the proof of Theorem \ref{thm:SLAN} in Section \ref{sec:Proof_of_Main_Theorem}. 

\subsection{A truncatated Taylor series remainder}
\label{sec:truncation}
Set 
\begin{equation}
\label{eqn:Tn}
T_n=\left\{\frac{s(\theta+\delta_n t_n)}{s(\theta)}-1-\frac{1}{2}\delta_n t_n \dot{\mathbf{I}}(\theta) \right\}\mathbbm{1}_{\{s(\theta)>0\}}.
\end{equation}
Let $\{T_{ni}\}_{i=1,\cdots, n}$ denote the $n$ {\iid} copies of $T_n$ corresponding to $Y_1,\cdots,Y_n$, and for $\eta \in (0,1)$ define
\begin{equation}
\label{eqn:An_classical}
A_n=\left\{\max_{1\leq i \leq n}\left|T_{ni}+\frac{1}{2}\delta_n t_n \dot{\mathbf{I}}(\theta)\right|<\eta\right\}.
\end{equation}
In the following, we use a truncation method similar to \cite{bickel1993efficient}, but our definition of $T_n$ differs from that in \cite{bickel1993efficient} (page $509$). 
Compared to the corresponding one in \cite{bickel1993efficient}, here $T_n$ additionally incorporates the first-order derivative of $s(\theta)$, since we have to resort to a higher order derivative of $s(\theta)$ for analysis on the $\delta_n=\mathcal{O}(n^{-1/4})$ scale.
By Proposition \ref{prop:A_complement_truncation} following, uniformly in $\theta\in K \subset \Theta$ for $K$ compact and $|t_n|\leq M$, $P_{\theta}(A_n^c)\rightarrow 0$ where $A_n^c$ is the complement of $A_n$. 
On the event $A_n$, we have
\begin{equation*}
	\mathbf{l}(\theta+\delta_n t_n)-\mathbf{l}(\theta)
	=\sum_{i=1}^n \log \left\{\frac{p(Y_i\;;\theta+\delta_n t_n)}{p(Y_i\;;\theta)}\right\}
	=2 \sum_{i=1}^n \log \left( T_{ni}+1+\frac{1}{2}\delta_n t_n \dot{\mathbf{I}}(Y_i\;;\theta)\right).	
\end{equation*}
By a Taylor expansion,
\begin{equation}
\begin{split}
\label{eqn:taylor_usage}
& \mathbf{l}(\theta+\delta_n t_n)-\mathbf{l}(\theta)	\\
&=2\sum_{i=1}^n \left(T_{ni}+\frac{1}{2}\delta_n t_n \dot{\mathbf{I}}(Y_i\;;\theta)\right)-\sum_{i=1}^n \left(T_{ni}+\frac{1}{2}\delta_n t_n \dot{\mathbf{I}}(Y_i\;;\theta)\right)^2\\
&\quad+\frac{2}{3}\sum_{i=1}^n \left(T_{ni}+\frac{1}{2}\delta_n t_n \dot{\mathbf{I}}(Y_i\;;\theta)\right)^3-\frac{1}{2}\sum_{i=1}^n \left(T_{ni}+\frac{1}{2}\delta_n t_n \dot{\mathbf{I}}(Y_i\;;\theta)\right)^4+R_n,
\end{split}
\end{equation}
where $$|R_n|\leq \frac{2C(\eta)}{5}\sum_{i=1}^n \left(T_{ni}+\frac{1}{2}\delta_n t_n \dot{\mathbf{I}}(Y_i\;;\theta)\right)^5,$$
for 
$C(\eta)< (1-\eta)^{-5}$ a finite constant that depends on $\eta$ only.

\subsection{Preliminary analysis}
\label{sec:Preliminary_Analysis}

In this subsection, we develop a sequence of lemmas and propositions needed for the proof in Section~\ref{sec:Proof_of_Main_Theorem}. Specifically, we conduct a series of preliminary analyses to bound the quantities $(T_{ni})^{\alpha}(\delta_n t_n \dot{\mathbf{I}}(Y_i\;;\theta))^{\beta}$ for $\alpha,\beta\in \{0,1,2,3,4,5\}$ such that $\alpha+\beta=5$. 

\begin{lemma}
\label{lem:Tn_second_moment} One has
\begin{align}
&E_{\theta}\left |T_n-\frac{1}{4} (\delta_n t_n)^2\ddot{\mathbf{I}}(\theta)\right|^2=o(\delta_n^4),\label{eqn::Tn_second_moment1}\\
&E_{\theta}\left |T_n-\frac{1}{4} (\delta_n t_n)^2\ddot{\mathbf{I}}(\theta)-\frac{1}{12}(\delta_nt_n)^3\dddot{\mathbf{I}}(\theta)\right|^2=o(\delta_n^6),\label{eqn::Tn_second_moment2}\\
&E_{\theta}\left |T_n-\frac{1}{4} (\delta_n t_n)^2\ddot{\mathbf{I}}(\theta)-\frac{1}{12}(\delta_nt_n)^3\dddot{\mathbf{I}}(\theta)-\frac{1}{48}(\delta_nt_n)^4\ddddot{\mathbf{I}}(\theta)\right|^2=o(\delta_n^8),\label{eqn::Tn_second_moment3}
\end{align}
as $\delta_n\rightarrow 0$, uniformly in $\theta\in K \subset \Theta$ for $K$ compact and $|t_n|\leq M$,
\end{lemma}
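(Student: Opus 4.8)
The plan is to reduce each of the three bounds to the corresponding $L_2(\mu)$-expansion of Definition \ref{def:2nd_smooth_point} through the identity $p(\theta)=s(\theta)^2$ together with the definitions $\dot{\mathbf{I}}(\theta)=2\dot s(\theta)/s(\theta)\,\mathbbm{1}_{\{s(\theta)>0\}}$, $\ddot{\mathbf{I}}(\theta)=2\ddot s(\theta)/s(\theta)\,\mathbbm{1}_{\{s(\theta)>0\}}$, and likewise for $\dddot{\mathbf{I}}$ and $\ddddot{\mathbf{I}}$. First I would multiply the bracket defining $T_n$, and each of its correction terms, by $s(\theta)$. Since $\tfrac12\delta_n t_n\dot{\mathbf{I}}(\theta)\,s(\theta)=\delta_n t_n\dot s(\theta)$, $\tfrac14(\delta_n t_n)^2\ddot{\mathbf{I}}(\theta)\,s(\theta)=\tfrac12(\delta_n t_n)^2\ddot s(\theta)$, $\tfrac{1}{12}(\delta_n t_n)^3\dddot{\mathbf{I}}(\theta)\,s(\theta)=\tfrac{1}{3!}(\delta_n t_n)^3\dddot s(\theta)$ and $\tfrac{1}{48}(\delta_n t_n)^4\ddddot{\mathbf{I}}(\theta)\,s(\theta)=\tfrac{1}{4!}(\delta_n t_n)^4\ddddot s(\theta)$, on $\{s(\theta)>0\}$ each centered quantity appearing in \eqref{eqn::Tn_second_moment1}--\eqref{eqn::Tn_second_moment3}, once multiplied by $s(\theta)$, becomes exactly a truncated Taylor remainder of the root density $s$ at $\theta$ with increment $\delta_n t_n$.

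Next, because $E_{\theta}|Z|^2=\int Z^2 s(\theta)^2\,d\mu$ and $Z\,s(\theta)$ is supported on $\{s(\theta)>0\}$, dropping the indicator only enlarges the integral, so each left-hand side is bounded above by the full squared $L_2(\mu)$-norm of the associated remainder. For \eqref{eqn::Tn_second_moment1} this norm is $\| s(\theta+\delta_n t_n)-s(\theta)-\delta_n t_n\dot s(\theta)-\tfrac12(\delta_n t_n)^2\ddot s(\theta)\|^2$, which is $o(\delta_n^4)$ by the first limit of Definition \ref{def:2nd_smooth_point}; the same reduction against the second and third limits yields $o(\delta_n^6)$ for \eqref{eqn::Tn_second_moment2} and $o(\delta_n^8)$ for \eqref{eqn::Tn_second_moment3}. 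This already delivers all three bounds pointwise in $\theta$.

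The main obstacle is to upgrade these pointwise statements to convergence uniform over $\theta\in K$ and $|t_n|\le M$, since Definition \ref{def:2nd_smooth_point} is stated one base point at a time. My plan is to bypass the pointwise expansion and instead write the remainders via Taylor's theorem with integral remainder in the Banach space $L_2(\mu)$. Because $\ddddot s$ is continuous by Assumption \ref{model_assumption}(2) and the lower-order derivative curves are continuous as antiderivatives of the next one (everywhere fourth-order regularity makes $\dddot s$, hence $\ddot s$, continuously differentiable), the fundamental theorem of calculus for $L_2(\mu)$-valued curves applies. For the top-order bound this gives, with $h=\delta_n t_n$, the representation $s(\theta+h)-\sum_{k=0}^{4}\tfrac{h^k}{k!}s^{(k)}(\theta)=\int_0^1\tfrac{(1-u)^3}{3!}h^4\,[\ddddot s(\theta+uh)-\ddddot s(\theta)]\,du$, whose norm is at most $\tfrac{|h|^4}{4!}\sup_{0\le u\le1}\|\ddddot s(\theta+uh)-\ddddot s(\theta)\|$. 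As $|h|\le M\delta_n\to0$, uniform continuity of $\ddddot s$ on a compact neighborhood of $K$ forces this supremum to $0$ uniformly in $(\theta,t_n)\in K\times[-M,M]$, giving the uniform $o(\delta_n^4)$ behind \eqref{eqn::Tn_second_moment3}; the analogous one-order-lower integral remainders, controlled by the moduli of continuity of $\dddot s$ and $\ddot s$, furnish the uniform $o(\delta_n^3)$ and $o(\delta_n^2)$ behind \eqref{eqn::Tn_second_moment2} and \eqref{eqn::Tn_second_moment1}. I expect the delicate point to be justifying the integral-remainder representation and the continuity of the intermediate derivatives from the one-directional differentiability of Definition \ref{def:2nd_smooth_point}; a fallback is a subsequence-contradiction argument on the compact set $K\times[-M,M]$ using the same continuity input.
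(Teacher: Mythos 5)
Your proposal is correct and follows essentially the same route as the paper's proof: both reduce each expectation to the squared $L_2(\mu)$-norm of a truncated Taylor remainder of $s$ by multiplying through by $s(\theta)$, write that remainder in integral form, and conclude by uniform continuity of $\ddot{s}$, $\dddot{s}$, $\ddddot{s}$ on compacts. Your added care about dropping the indicator (giving an inequality rather than the paper's stated equality) and about deriving the integral-remainder representation from the pointwise differentiability of Definition~\ref{def:2nd_smooth_point} are minor refinements of the same argument, not a different approach.
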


\begin{proof}
See Appendix \ref{app:Tn_second_moment}.
\end{proof}

\begin{lemma}
\label{lem:ddot_I2_ui}  One has
\begin{align}
&\lim_{\lambda \rightarrow \infty} \sup_{\theta\in K} E_{\theta}\left[|\ddot{\mathbf{I}}(\theta)|^2 \mathbbm{1}_{\left\{|\ddot{\mathbf{I}}(\theta)|\geq \lambda\right\}}\right]=0,\label{eqn:ddot_I2_ui_1}\\
&\lim_{\lambda \rightarrow \infty} \sup_{\theta\in K} E_{\theta}\left[|\dot{\mathbf{I}}(\theta)|^4 \mathbbm{1}_{\left\{|\dot{\mathbf{I}}(\theta)|\geq \lambda\right\}}\right]=0,\label{eqn:ddot_I2_ui_2}\\
&\lim_{\lambda \rightarrow \infty} \sup_{\theta\in K} E_{\theta}\left[\left |\ddddot{\mathbf{I}}(\theta)\right| \mathbbm{1}_{\left\{\left |\ddddot{\mathbf{I}}(\theta)\right|\geq \lambda\right\}}\right]=0,\label{eqn:ddot_I2_ui_3}\\
&\lim_{\lambda \rightarrow \infty} \sup_{\theta\in K} E_{\theta}\left[|\dddot{\mathbf{I}}(\theta)\dot{\mathbf{I}}(\theta)| \mathbbm{1}_{\left\{|\dddot{\mathbf{I}}(\theta)\dot{\mathbf{I}}(\theta)|\geq \lambda\right\}}\right]=0.\label{eqn:ddot_I2_ui_4}
\end{align}
\end{lemma}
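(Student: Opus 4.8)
The plan is to read each of the four displays as a uniform integrability claim over the compact set $K$, and to treat it according to which moment Assumption \ref{model_assumption}(3) makes available. The workhorse is the elementary truncation bound: for a random variable $Z$ and powers $k' > k \geq 0$ one has $E_\theta[|Z|^k \mathbbm{1}_{\{|Z| \geq \lambda\}}] \leq \lambda^{-(k'-k)} E_\theta[|Z|^{k'}]$, which reduces uniform integrability of $|Z|^k$ to a uniform-in-$\theta$ bound on the higher moment $E_\theta[|Z|^{k'}]$. Three of the four claims, namely \eqref{eqn:ddot_I2_ui_1}, \eqref{eqn:ddot_I2_ui_2}, and \eqref{eqn:ddot_I2_ui_4}, carry a higher moment and so fall directly into this scheme; the remaining one, \eqref{eqn:ddot_I2_ui_3}, has only a first moment and will require a separate argument exploiting the $\mathcal{L}_2(\mu)$ embedding $p(\theta) = s(\theta)^2$.

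First I would dispose of \eqref{eqn:ddot_I2_ui_1} and \eqref{eqn:ddot_I2_ui_2}. Taking $Z = \ddot{\mathbf{I}}(\theta)$ with $k = 2$, $k' = 3$ gives $E_\theta[|\ddot{\mathbf{I}}|^2 \mathbbm{1}_{\{|\ddot{\mathbf{I}}| \geq \lambda\}}] \leq \lambda^{-1} E_\theta|\ddot{\mathbf{I}}|^3$, and taking $Z = \dot{\mathbf{I}}(\theta)$ with $k = 4$, $k' = 6$ gives $E_\theta[|\dot{\mathbf{I}}|^4 \mathbbm{1}_{\{|\dot{\mathbf{I}}| \geq \lambda\}}] \leq \lambda^{-2} E_\theta|\dot{\mathbf{I}}|^6$; both right-hand moments are finite by Assumption \ref{model_assumption}(3). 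For \eqref{eqn:ddot_I2_ui_4} the summand is a product, so I would first manufacture a single higher moment by Hölder's inequality with conjugate exponents $4/3$ and $4$:
\begin{equation*}
E_\theta |\dddot{\mathbf{I}}(\theta)\dot{\mathbf{I}}(\theta)|^{3/2} \leq \big(E_\theta|\dddot{\mathbf{I}}(\theta)|^{2}\big)^{3/4}\big(E_\theta|\dot{\mathbf{I}}(\theta)|^{6}\big)^{1/4} < \infty,
\end{equation*}
again finite by Assumption \ref{model_assumption}(3); the truncation bound with $k = 1$, $k' = 3/2$ then yields an $O(\lambda^{-1/2})$ estimate. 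To upgrade the pointwise finite moments to $\sup_{\theta \in K}$ in each of these three cases I would invoke continuity of the relevant moment functions on $\Theta$ together with compactness of $K$, so that the supremum is finite; the $\lambda^{-c}$ prefactor then drives the uniform bound to zero.

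The delicate case, and the one I expect to be the main obstacle, is \eqref{eqn:ddot_I2_ui_3}, since only $E_\theta|\ddddot{\mathbf{I}}(\theta)| < \infty$ is assumed and the truncation trick has no higher moment to consume. Here I would exploit the square-root structure: because $P_\theta$ has $\mu$-density $s(\theta)^2$ and $\ddddot{\mathbf{I}}(\theta) = 2\ddddot{s}(\theta)/s(\theta)$ on $\{s(\theta) > 0\}$, writing $A_\lambda = \{|\ddddot{\mathbf{I}}(\theta)| \geq \lambda\}$ gives
\begin{equation*}
E_\theta\big[|\ddddot{\mathbf{I}}(\theta)|\,\mathbbm{1}_{A_\lambda}\big] = 2\int_{A_\lambda} |\ddddot{s}(\theta)|\, s(\theta)\, d\mu \leq 2\,\|\ddddot{s}(\theta)\|\,\big(P_\theta(A_\lambda)\big)^{1/2}
\end{equation*}
by Cauchy--Schwarz and $\int_{A_\lambda} s(\theta)^2 d\mu = P_\theta(A_\lambda)$. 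Bounding $P_\theta(A_\lambda) \leq \lambda^{-1} E_\theta|\ddddot{\mathbf{I}}(\theta)| \leq 2\lambda^{-1}\|\ddddot{s}(\theta)\|$ by Markov's inequality (using $\|s(\theta)\| = 1$) produces $E_\theta[|\ddddot{\mathbf{I}}|\mathbbm{1}_{A_\lambda}] \leq 2\sqrt{2}\,\lambda^{-1/2}\|\ddddot{s}(\theta)\|^{3/2}$. Finally, Assumption \ref{model_assumption}(2) makes $\theta \mapsto \ddddot{s}(\theta)$ continuous into $\mathcal{L}_2(\mu)$, so $\sup_{\theta \in K} \|\ddddot{s}(\theta)\| < \infty$ on the compact $K$, and the bound tends to $0$ uniformly as $\lambda \to \infty$. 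The subtle points I would watch are securing the uniformity over $K$ in the first three cases, which hinges on continuity (or local boundedness) of the moment maps, and confirming that the Cauchy--Schwarz/Markov chain for the fourth case is genuinely uniform; the latter is clean precisely because it reduces the whole estimate to the single continuous quantity $\|\ddddot{s}(\theta)\|$. The same $\mathcal{L}_2$ reduction incidentally gives an alternative, continuity-friendly route for \eqref{eqn:ddot_I2_ui_1} and \eqref{eqn:ddot_I2_ui_4}, whose even/product structure lets the density power cancel into a tail $\mathcal{L}_2$-norm.
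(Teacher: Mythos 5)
Your treatment of \eqref{eqn:ddot_I2_ui_3} is correct and, in fact, cleaner than the paper's: the two applications of Cauchy--Schwarz reduce everything to $\sup_{\theta\in K}\left\lVert \ddddot{s}(\theta)\right\rVert$, which is finite by Assumption \ref{model_assumption}(2) and compactness, and you get an explicit $\mathcal{O}(\lambda^{-1/2})$ rate that the paper's qualitative argument does not provide.

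The other three cases, however, contain a genuine gap. Your truncation bound reduces \eqref{eqn:ddot_I2_ui_1}, \eqref{eqn:ddot_I2_ui_2} and \eqref{eqn:ddot_I2_ui_4} to the claim $\sup_{\theta\in K}E_{\theta}|\ddot{\mathbf{I}}(\theta)|^3<\infty$, $\sup_{\theta\in K}E_{\theta}|\dot{\mathbf{I}}(\theta)|^6<\infty$ and $\sup_{\theta\in K}E_{\theta}|\dddot{\mathbf{I}}(\theta)|^2<\infty$ respectively, and you justify these suprema by ``continuity of the relevant moment functions.'' That continuity is neither assumed nor derivable from the model structure. Assumption \ref{model_assumption}(3) gives only \emph{pointwise} finiteness of these higher moments, and the regularity hypotheses supply continuity only of the maps $\theta\mapsto s(\theta),\dot{s}(\theta),\dots,\ddddot{s}(\theta)$ into $\mathcal{L}_2(\mu)$. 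Quantities such as $E_{\theta}|\ddot{\mathbf{I}}(\theta)|^3=8\int |\ddot{s}(\theta)|^3 s(\theta)^{-1}\mathbbm{1}_{\{s(\theta)>0\}}\,d\mu$ or $E_{\theta}|\dot{\mathbf{I}}(\theta)|^6$ are not expressible as $\mathcal{L}_2$ inner products of these continuous maps, so their continuity (or even local boundedness) in $\theta$ does not follow; nothing rules out $E_{\theta_n}|\ddot{\mathbf{I}}(\theta_n)|^3\to\infty$ along some $\theta_n\to\theta$ in $K$, in which case your bound $\lambda^{-1}\sup_{\theta\in K}E_{\theta}|\ddot{\mathbf{I}}(\theta)|^3$ is vacuous. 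Your closing remark that the ``$\mathcal{L}_2$ reduction'' also handles \eqref{eqn:ddot_I2_ui_1} and \eqref{eqn:ddot_I2_ui_4} does not repair this: it yields only $E_{\theta}\left[|\ddot{\mathbf{I}}(\theta)|^2\mathbbm{1}_{\{|\ddot{\mathbf{I}}(\theta)|\geq\lambda\}}\right]=4\int_{A_\lambda}\ddot{s}^2(\theta)\,d\mu\leq 4\left\lVert\ddot{s}(\theta)\right\rVert^2$, a bound that does not tend to zero; making the tail integral uniformly small is exactly the statement to be proved. The paper avoids this trap by arguing along convergent sequences $\theta_n\to\theta$ in $K$: it proves continuity of the \emph{second}-order quantities actually being truncated (e.g.\ $E_{\theta}[\ddot{\mathbf{I}}(\theta)]^2$ and $E_{\theta}[\dot{\mathbf{I}}(\theta)]^4$, which \emph{are} reducible to $\mathcal{L}_2$ pairings of the continuous derivative maps), and then invokes the convergence-of-moments criterion for uniform integrability (Lemma A.7.2.B of \cite{bickel1993efficient}) together with a compactness--contradiction argument. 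To salvage your route for \eqref{eqn:ddot_I2_ui_1}, \eqref{eqn:ddot_I2_ui_2} and \eqref{eqn:ddot_I2_ui_4} you would need either to strengthen Assumption \ref{model_assumption}(3) to local uniformity of the higher moments, or to switch to the paper's sequential uniform-integrability argument.
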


\begin{proof}
See Appendix \ref{app:ddot_I2_ui}.
\end{proof}

Recall that $\{T_{ni}\}_{i=1,\cdots, n}$ denote the $n$ {\iid} copies of $T_n$ corresponding to $Y_1,\cdots,Y_n$. Define
\begin{equation}
\label{eqn:An}
\widetilde{A}_n(\epsilon)=\big\{\max_{1\leq i \leq n}|T_{ni}|<\epsilon\big\},
\end{equation}
for every $\epsilon>0$.

\begin{proposition}
\label{prop:A_complement}
Uniformly in $\theta\in K \subset \Theta$ for $K$ compact and $|t_n|\leq M$,
$$P_{\theta}(\widetilde{A}_n^c)\rightarrow 0,$$
where $\widetilde{A}_n^c$ is the complement of $\widetilde{A}_n$. 
\end{proposition}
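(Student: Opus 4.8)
The plan is to reduce the statement about the maximum to a one-dimensional tail bound and then extract the decay from Lemmas \ref{lem:Tn_second_moment} and \ref{lem:ddot_I2_ui}. Since $T_{n1},\dots,T_{nn}$ are {\iid} copies of $T_n$, a union bound gives, for any fixed $\epsilon>0$,
\[
P_\theta(\widetilde{A}_n^c)=P_\theta\Big(\max_{1\le i\le n}|T_{ni}|\ge \epsilon\Big)\le n\,P_\theta(|T_n|\ge \epsilon),
\]
so it suffices to show $n\,P_\theta(|T_n|\ge \epsilon)\to 0$ uniformly over $\theta\in K$ and $|t_n|\le M$. Motivated by \eqref{eqn::Tn_second_moment1}, I would split $T_n=W_n+R_n$, where $W_n=\tfrac14(\delta_n t_n)^2\ddot{\mathbf{I}}(\theta)$ is the leading term and $R_n=T_n-W_n$ satisfies $E_\theta|R_n|^2=o(\delta_n^4)$ uniformly by Lemma \ref{lem:Tn_second_moment}. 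Using $\{|T_n|\ge\epsilon\}\subseteq\{|W_n|\ge\epsilon/2\}\cup\{|R_n|\ge\epsilon/2\}$, it then remains to bound the two resulting probabilities.

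For the remainder, Chebyshev's inequality yields $n\,P_\theta(|R_n|\ge\epsilon/2)\le (4/\epsilon^2)\,n\,E_\theta|R_n|^2=(4/\epsilon^2)\,n\cdot o(\delta_n^4)$. Because $\delta_n=\mathcal{O}(n^{-1/4})$ gives $n\delta_n^4=\mathcal{O}(1)$, I can write $n\cdot o(\delta_n^4)=(n\delta_n^4)\cdot o(1)\to 0$, and the uniformity in $\theta$ and $t_n$ is inherited from Lemma \ref{lem:Tn_second_moment}.

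The delicate part is the leading term, and this is where the naive approach fails: a plain Chebyshev bound using only $E_\theta|T_n|^2=\mathcal{O}(\delta_n^4)$ would give $n\,P_\theta(|T_n|\ge\epsilon)=\mathcal{O}(1)$, not $o(1)$. The resolution is to observe that $\{|W_n|\ge\epsilon/2\}$ forces $|\ddot{\mathbf{I}}(\theta)|$ above a growing threshold: since $|t_n|\le M$, one has $\{|W_n|\ge\epsilon/2\}\subseteq\{|\ddot{\mathbf{I}}(\theta)|\ge\lambda_n\}$ with $\lambda_n=2\epsilon/(M^2\delta_n^2)\to\infty$. Applying Markov's inequality against the truncated second moment,
\[
n\,P_\theta(|W_n|\ge\epsilon/2)\le \frac{n}{\lambda_n^2}\,E_\theta\big[|\ddot{\mathbf{I}}(\theta)|^2\,\mathbbm{1}_{\{|\ddot{\mathbf{I}}(\theta)|\ge\lambda_n\}}\big].
\]
Here $n/\lambda_n^2=M^4 n\delta_n^4/(4\epsilon^2)=\mathcal{O}(1)$, while the truncated second moment tends to zero uniformly in $\theta\in K$ by the uniform integrability \eqref{eqn:ddot_I2_ui_1} of Lemma \ref{lem:ddot_I2_ui} (since $\lambda_n\to\infty$). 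Hence $n\,P_\theta(|W_n|\ge\epsilon/2)\to 0$ uniformly, and combining with the remainder bound completes the proof. The main obstacle is precisely this last step: the second moment alone only buys a bounded quantity, and it is the uniform integrability of $\ddot{\mathbf{I}}(\theta)$---rather than a crude moment bound---that supplies the extra vanishing factor and, simultaneously, the required uniformity over the compact set $K$.
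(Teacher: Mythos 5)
Your proof is correct and follows essentially the same route as the paper's: the same union bound, the same decomposition of $T_n$ around $\tfrac14(\delta_n t_n)^2\ddot{\mathbf{I}}(\theta)$, Chebyshev plus Lemma \ref{lem:Tn_second_moment} for the remainder, and the uniform integrability \eqref{eqn:ddot_I2_ui_1} of Lemma \ref{lem:ddot_I2_ui} to turn the bounded quantity $n\delta_n^4$ into an $o(1)$ for the leading term. Your explicit identification of the growing threshold $\lambda_n=2\epsilon/(M^2\delta_n^2)$ is just a transparent restatement of the paper's truncated Chebyshev bound, so there is nothing substantively different to compare.
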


\begin{proof}
We firstly note that 
$$P_{\theta}(\widetilde{A}_n^c)\leq \sum_{i=1}^n P_{\theta}(|T_{ni}|\geq \epsilon)=nP_{\theta}(|T_{n}|\geq \epsilon).$$
Then, it suffices to show that
$$P_{\theta}(|T_{n}|\geq \epsilon)=o(1/n).$$
But
\begin{equation}
\begin{split}
\label{eqn:A_complement1}
P_{\theta}(|T_{n}|\geq \epsilon)
\leq & P_{\theta}\left(\left |T_n- \frac{1}{4}(\delta_n t_n)^2 \ddot{\mathbf{I}}(\theta)\right|\geq \frac{1}{2}\epsilon \right)
+P_{\theta}\left(\left |\frac{1}{4}(\delta_n t_n)^2 \ddot{\mathbf{I}}(\theta)\right|\geq \frac{1}{2}\epsilon \right)\\
\leq & \frac{4}{\epsilon^2}E_{\theta}\left |T_n- \frac{1}{4}(\delta_n t_n)^2 \ddot{\mathbf{I}}(\theta)\right|^2
+\frac{4}{\epsilon^2}E_{\theta}\left |\frac{1}{4}(\delta_n t_n)^2 \ddot{\mathbf{I}}(\theta)\right|^2 \mathbbm{1}_{\left\{\left |(\delta_n t_n)^2 \ddot{\mathbf{I}}(\theta)\right|\geq \epsilon\right\}}\\
\leq & o(\delta_n^4)+\frac{1}{4\epsilon^2}(\delta_n t_n)^4E_{\theta}|\ddot{\mathbf{I}}(\theta)|^2 \mathbbm{1}_{\left\{\left |(\delta_n t_n)^2 \ddot{\mathbf{I}}(\theta)\right|\geq \epsilon\right\}}\\
=&o(\delta_n^4),
\end{split}
\end{equation}
where the second to the last step is by Lemma \ref{lem:Tn_second_moment}, and the last step is by Lemma \ref{lem:ddot_I2_ui}.
\end{proof}

Recall that for $\eta \in (0,1)$
\begin{equation}
A_n=\left\{\max_{1\leq i \leq n}\left|T_{ni}+\frac{1}{2}\delta_n t_n \dot{\mathbf{I}}(\theta)\right|<\eta\right\}.
\end{equation}
\begin{proposition}
	\label{prop:A_complement_truncation}
	Uniformly in $\theta\in K \subset \Theta$ for $K$ compact and $|t_n|\leq M$,
	$$P_{\theta}(A_n^c)\rightarrow 0,$$
	where $A_n^c$ is the complement of $A_n$. 
\end{proposition}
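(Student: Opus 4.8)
The plan is to deduce this from the companion Proposition \ref{prop:A_complement} (which controls $\max_i |T_{ni}|$) by peeling off the extra linear term $\tfrac12 \delta_n t_n \dot{\mathbf{I}}(\theta)$ via the triangle inequality and a crude moment bound. First I would note that
$$\left|T_{ni} + \tfrac{1}{2}\delta_n t_n \dot{\mathbf{I}}(Y_i\;;\theta)\right| \le |T_{ni}| + \tfrac{1}{2}|\delta_n t_n|\,\big|\dot{\mathbf{I}}(Y_i\;;\theta)\big|,$$
so that if both $\max_i |T_{ni}| < \eta/2$ and $\max_i \tfrac{1}{2}|\delta_n t_n|\,|\dot{\mathbf{I}}(Y_i\;;\theta)| < \eta/2$ hold, then the maximum defining $A_n$ stays strictly below $\eta$. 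Using $|t_n|\le M$, this gives the inclusion
$$A_n^c \subseteq \Big\{\max_{1\le i\le n} |T_{ni}| \ge \tfrac{\eta}{2}\Big\} \cup \Big\{\max_{1\le i\le n} \big|\dot{\mathbf{I}}(Y_i\;;\theta)\big| \ge \tfrac{\eta}{M\delta_n}\Big\},$$
and it suffices to bound the probability of each event uniformly in $\theta\in K$ and $|t_n|\le M$.

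The first event is precisely $\widetilde{A}_n(\eta/2)^c$ in the notation of equation \eqref{eqn:An}, and its probability tends to zero uniformly by Proposition \ref{prop:A_complement}. For the second event I would apply a union bound and then Markov's inequality at the sixth moment, which is finite by Assumption \ref{model_assumption}(3):
$$P_\theta\Big(\max_{1\le i \le n}\big|\dot{\mathbf{I}}(Y_i\;;\theta)\big| \ge \tfrac{\eta}{M\delta_n}\Big) \le n\, P_\theta\Big(\big|\dot{\mathbf{I}}(\theta)\big| \ge \tfrac{\eta}{M\delta_n}\Big) \le \frac{n\,(M\delta_n)^6}{\eta^6}\, E_\theta\big|\dot{\mathbf{I}}(\theta)\big|^6.$$
Since $\delta_n=\mathcal{O}(n^{-1/4})$ forces $n\delta_n^6 = \mathcal{O}(n^{-1/2})\to 0$, and $\sup_{\theta\in K} E_\theta|\dot{\mathbf{I}}(\theta)|^6<\infty$ on the compact set $K$, this upper bound converges to zero uniformly. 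Combining the two pieces yields $P_\theta(A_n^c)\to 0$ uniformly, as claimed.

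The main obstacle I anticipate is not the probabilistic machinery, which is just the triangle inequality, a union bound, and Markov's inequality, but rather ensuring the sixth-moment bound is \emph{uniform} over $K$, i.e.\ that $\sup_{\theta\in K} E_\theta|\dot{\mathbf{I}}(\theta)|^6<\infty$. Assumption \ref{model_assumption}(3) only asserts finiteness pointwise in $\theta$, so to make the final display uniform I would need to upgrade it to a uniform-on-compacta bound, by the same continuity and uniform-integrability reasoning used in Lemma \ref{lem:ddot_I2_ui}. The decisive structural feature that makes the argument work is the enlarged scaling $\delta_n=\mathcal{O}(n^{-1/4})$: it is precisely the gap between $n\delta_n^6\to 0$ and the availability of a sixth moment that forces the maximal term to stay below $\eta$ with probability tending to one.
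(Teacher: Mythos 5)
Your proof is correct, and its skeleton (union bound over $i$, triangle-inequality split of $T_{ni}+\tfrac12\delta_n t_n\dot{\mathbf{I}}$ into the $T$-part and the linear part) matches the paper's; the difference lies entirely in how the linear term is killed. The paper bounds $P_\theta\bigl(|\tfrac12\delta_n t_n\dot{\mathbf{I}}(\theta)|\ge\tfrac\eta2\bigr)$ by a fourth-moment Markov bound restricted to the tail event, i.e.\ by a constant times $(\delta_n t_n)^4\,E_\theta\bigl[|\dot{\mathbf{I}}(\theta)|^4\mathbbm{1}_{\{|\delta_n t_n\dot{\mathbf{I}}(\theta)|\ge\eta\}}\bigr]$, and then invokes the uniform integrability of $|\dot{\mathbf{I}}|^4$ established in Lemma~\ref{lem:ddot_I2_ui} to upgrade the resulting $\mathcal{O}(\delta_n^4)=\mathcal{O}(1/n)$ to $o(1/n)$; this needs only the fourth moment but cannot dispense with the indicator, since a plain fourth-moment Markov bound gives $n\delta_n^4=\mathcal{O}(1)$, which is not enough. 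You instead spend two extra powers of $\delta_n$ by going to the sixth moment, obtaining $n\delta_n^6=\mathcal{O}(n^{-1/2})\to0$ with no uniform-integrability step at all — a more elementary argument, and one the paper itself uses verbatim in Proposition~\ref{prop:dotS_fifth}. The price, which you correctly flag, is that uniformity over $K$ now rests on $\sup_{\theta\in K}E_\theta|\dot{\mathbf{I}}(\theta)|^6<\infty$, whereas Assumption~\ref{model_assumption}(3) is only pointwise and Lemma~\ref{lem:ddot_I2_ui} establishes the requisite continuity/uniform-integrability only for the fourth moment; closing that would require a separate continuity argument for $\theta\mapsto E_\theta|\dot{\mathbf{I}}(\theta)|^6$ (the paper leaves the same gap in Proposition~\ref{prop:dotS_fifth}, so you are no worse off, but the paper's own proof of the present proposition avoids the issue entirely by staying at the fourth moment). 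Your use of Proposition~\ref{prop:A_complement} with $\epsilon=\eta/2$ for the $T$-part is a clean shortcut; the paper instead re-invokes the intermediate bound \eqref{eqn:A_complement1}, which amounts to the same thing.
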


\begin{proof}
	We firstly note that 
	$$P_{\theta}(A_n^c)\leq \sum_{i=1}^n P_{\theta}\left(\left|T_{ni}+\frac{1}{2}\delta_n t_n \dot{\mathbf{I}}(\theta)\right|\geq \eta\right)=nP_{\theta}\left(\left|T_{n}+\frac{1}{2}\delta_n t_n \dot{\mathbf{I}}(\theta)\right|\geq \eta\right).$$
	Then, it suffices to show that
	$$P_{\theta}\left(\left|T_{n}+\frac{1}{2}\delta_n t_n \dot{\mathbf{I}}(\theta)\right|\geq \eta\right)=o(1/n).$$
	But
	\begin{equation*}
	\begin{split}
	P_{\theta}\left(\left|T_{n}+\frac{1}{2}\delta_n t_n \dot{\mathbf{I}}(\theta)\right|\geq \eta\right)
	\leq & P_{\theta}\left(\left |T_n\right|\geq \frac{1}{2}\eta \right)
	+P_{\theta}\left(\left |\frac{1}{2}\delta_n t_n \dot{\mathbf{I}}(\theta)\right|\geq \frac{1}{2}\eta \right)\\
	\leq & o(\delta_n^4)+\frac{1}{\eta^2}(\delta_n t_n)^4E_{\theta}|\dot{\mathbf{I}}(\theta)|^4 \mathbbm{1}_{\left\{\left |\delta_n t_n \dot{\mathbf{I}}(\theta)\right|\geq \epsilon\right\}}\\
	=&o(\delta_n^4),
	\end{split}
	\end{equation*}
	where the second to the last step is by equation \eqref{eqn:A_complement1} and the last step is by Lemma \ref{lem:ddot_I2_ui}.
\end{proof}

\begin{proposition} 
\label{prop:sum_T_secondmoment}
For any $r\geq 0$, we have
$$\sum_{i=1}^n \left|T_{ni}^2-\frac{1}{16}\delta_n^4t_n^4 E_{\theta}[\ddot{\mathbf{I}}(\theta)]^2\right|^{1+r}\xrightarrow[]{p} 0,$$
uniformly in $|t_n| \leq M$ and in $\theta\in K \subset \Theta$ for $K$ compact,
in $P_{\theta}$ probability.
\end{proposition}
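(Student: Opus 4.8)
The plan is to treat the centering $c_n:=\tfrac{1}{16}\delta_n^4 t_n^4\,E_{\theta}[\ddot{\mathbf{I}}(\theta)]^2$ as the $L^2$ surrogate for $T_n$ delivered by Lemma~\ref{lem:Tn_second_moment}, observing that each summand $|T_{ni}^2-c_n|$ is of order $\delta_n^4=\mathcal{O}(n^{-1})$ while its maximum over $i$ vanishes. The elementary bound $|x|^{1+r}\le(\max_j|x_j|)^r|x|$ then yields
\begin{equation*}
\sum_{i=1}^n\big|T_{ni}^2-c_n\big|^{1+r}\le\Big(\max_{1\le i\le n}\big|T_{ni}^2-c_n\big|\Big)^{r}\,\sum_{i=1}^n\big|T_{ni}^2-c_n\big|,
\end{equation*}
so it suffices to establish two facts, both uniformly in $\theta\in K$ compact and $|t_n|\le M$: (i) the sum on the right is $\mathcal{O}_p(1)$, and (ii) the maximum converges to zero in $P_{\theta}$ probability. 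For $r>0$ the product is then $o_p(1)$, which is the claim.

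For (ii), I would note that $c_n=\mathcal{O}(\delta_n^4)\to0$ uniformly, using $\sup_{\theta\in K}E_{\theta}[\ddot{\mathbf{I}}(\theta)]^2<\infty$ (a consequence of the uniform integrability in Lemma~\ref{lem:ddot_I2_ui}). Proposition~\ref{prop:A_complement} gives $P_{\theta}(\widetilde A_n(\epsilon)^c)\to0$ for every fixed $\epsilon>0$, i.e.\ $\max_{i}|T_{ni}|\xrightarrow[]{p}0$ uniformly; hence $\max_i|T_{ni}^2-c_n|\le(\max_i|T_{ni}|)^2+c_n\xrightarrow[]{p}0$ uniformly. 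No moment control beyond this maximal bound is needed for (ii), which is why it comes essentially for free from the truncation machinery already developed.

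The substance lies in (i), which I would settle in $L^1$ and then pass to probability via Markov. Writing $T_n=\tfrac14\delta_n^2 t_n^2\ddot{\mathbf{I}}(\theta)+W_n$ with $E_{\theta}W_n^2=o(\delta_n^4)$ from Lemma~\ref{lem:Tn_second_moment}, equation~\eqref{eqn::Tn_second_moment1}, one has
\begin{equation*}
T_n^2-c_n=\tfrac{1}{16}\delta_n^4 t_n^4\big(\ddot{\mathbf{I}}^2(\theta)-E_{\theta}\ddot{\mathbf{I}}^2(\theta)\big)+\tfrac12\delta_n^2 t_n^2\,\ddot{\mathbf{I}}(\theta)W_n+W_n^2.
\end{equation*}
The first term is $\mathcal{O}(\delta_n^4)$ in $L^1$ once $\sup_{\theta\in K}E_{\theta}|\ddot{\mathbf{I}}^2(\theta)-E_{\theta}\ddot{\mathbf{I}}^2(\theta)|<\infty$ (Lemma~\ref{lem:ddot_I2_ui} together with the moment bounds in Assumption~\ref{model_assumption}); the cross term is $o(\delta_n^4)$ by Cauchy–Schwarz, since $\tfrac12\delta_n^2 t_n^2(E_{\theta}\ddot{\mathbf{I}}^2)^{1/2}(E_{\theta}W_n^2)^{1/2}=\delta_n^2\cdot o(\delta_n^2)$; and $E_{\theta}W_n^2=o(\delta_n^4)$. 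Thus $\sup_{\theta\in K,|t_n|\le M}E_{\theta}|T_n^2-c_n|=\mathcal{O}(\delta_n^4)$, so that $E_{\theta}\big[\sum_{i}|T_{ni}^2-c_n|\big]=n\,E_{\theta}|T_n^2-c_n|=\mathcal{O}(n\delta_n^4)=\mathcal{O}(1)$, giving (i).

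I expect the main obstacle to be the bookkeeping in (i): replacing $T_n^2$ by its leading $\delta_n^4$-surrogate with an $L^1$ error that is genuinely $o(\delta_n^4)$ and, crucially, \emph{uniform} in $\theta\in K$ and $|t_n|\le M$. This is precisely the role of the sharpened second-moment estimate of Lemma~\ref{lem:Tn_second_moment} and the uniform integrability of Lemma~\ref{lem:ddot_I2_ui}. Note finally that, because $c_n\to0$ and the sum in (i) is only tight (of exact order $n\delta_n^4$ at the scale $\delta_n\asymp n^{-1/4}$), it is the vanishing factor $(\max_i|\cdot|)^r$ with $r>0$ that upgrades tightness into convergence to zero.
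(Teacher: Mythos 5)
Your argument is correct and essentially sharp for $r>0$, but the proposition is stated for all $r\geq 0$, and your proof leaves the case $r=0$ open: the bound $\sum_i|T_{ni}^2-c_n|^{1+r}\le\big(\max_i|T_{ni}^2-c_n|\big)^r\sum_i|T_{ni}^2-c_n|$ degenerates to an identity when $r=0$, and your step (i) delivers only $\mathcal{O}_p(1)$ there, not $o_p(1)$. The paper handles $r=0$ by a finer, two-step centering: it first compares $T_{ni}^2$ with the \emph{random} quantity $\frac{1}{16}\delta_n^4t_n^4\,\ddot{\mathbf{I}}^2(Y_i\,;\theta)$ rather than with its mean, showing via equation \eqref{eqn::Tn_second_moment1} and Lemma A.$7.1$ of \cite{bickel1993efficient} that $n\,E_\theta\big|T_n^2-\frac{1}{16}\delta_n^4t_n^4\ddot{\mathbf{I}}^2(\theta)\big|=o(1)$, so that piece vanishes in $L^1$; the residual $\frac{1}{16}\delta_n^4t_n^4\sum_i\big(\ddot{\mathbf{I}}^2(Y_i\,;\theta)-E_\theta[\ddot{\mathbf{I}}(\theta)]^2\big)$ is then handled by Chung's uniform strong law together with Lemma \ref{lem:ddot_I2_ui}, since $n\delta_n^4=\mathcal{O}(1)$ multiplies an average converging to zero. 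That second centering is the idea missing from your write-up, and it is exactly what step (3) of the proof of Theorem \ref{thm:SLAN} consumes, where $\sum_i(T_{ni}^2-c_n)$ must be $o_p(1)$ and not merely tight.

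That said, your closing observation --- that $\sum_i|T_{ni}^2-c_n|$ is of exact order $n\delta_n^4$ --- is a sharp diagnosis of a real delicacy in the $r=0$ statement \emph{with absolute values inside the sum}: the law of large numbers gives $\frac1n\sum_i\big|\ddot{\mathbf{I}}^2(Y_i\,;\theta)-E_\theta[\ddot{\mathbf{I}}(\theta)]^2\big|\to E_\theta\big|\ddot{\mathbf{I}}^2(\theta)-E_\theta[\ddot{\mathbf{I}}(\theta)]^2\big|$, which is strictly positive unless $\ddot{\mathbf{I}}^2(\theta)$ is degenerate, so when $n\delta_n^4$ has a positive limit the second piece of the paper's own triangle-inequality decomposition does not tend to zero --- only its signed version does, which is what the paper's displayed almost-sure limit actually establishes. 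In other words, your route proves everything the paper uses downstream (tightness, the $r>0$ statement, and, once you append the signed LLN step just described, the signed $r=0$ convergence needed in Section \ref{sec:Proof_of_Main_Theorem}), but to close the gap against the proposition as literally written you must at minimum add the $\ddot{\mathbf{I}}^2(Y_i\,;\theta)$-centering argument, and you should flag that the absolute-value version at $r=0$ requires either $n\delta_n^4\to 0$ or a reformulation in terms of the signed sum.
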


\begin{proof}
See Appendix \ref{app:sum_T_secondmoment}
\end{proof}

\begin{proposition} 
\label{prop:T_3to5}
We have, for any $k\geq 1$,
	$$\sum_{i=1}^n  |T_{ni}|^{2+k} \xrightarrow[]{p} 0,$$
uniformly in $|t_n| \leq M$ and in $\theta\in K \subset \Theta$ for $K$ compact,
	in $P_{\theta}$ probability.
\end{proposition}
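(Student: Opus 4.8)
The plan is to avoid a direct moment estimate and instead combine the truncation event $\widetilde{A}_n(\epsilon)$ with the sharp second-moment control already in hand. The structural point is that a bound of the form $n\,E_\theta|T_n|^{2+k}\to 0$ is \emph{not} available for large $k$: since $|T_n|$ behaves like $\tfrac14\delta_n^2 t_n^2\,\ddot{\mathbf{I}}(\theta)$ by Lemma~\ref{lem:Tn_second_moment}, controlling $E_\theta|T_n|^{2+k}$ would require finiteness of $E_\theta|\ddot{\mathbf{I}}(\theta)|^{2+k}$, whereas Assumption~\ref{model_assumption} only supplies moments of $\ddot{\mathbf{I}}(\theta)$ up to order three. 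This is the main obstacle, and it is circumvented by first restricting to the event on which every $|T_{ni}|$ is uniformly small, so that the surplus factor $|T_{ni}|^{k}$ can simply be bounded by $\epsilon^k$.

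First I would fix $\epsilon>0$ and recall from Proposition~\ref{prop:A_complement} that $P_\theta(\widetilde{A}_n(\epsilon)^c)\to 0$ uniformly in $\theta\in K$ and $|t_n|\le M$. On $\widetilde{A}_n(\epsilon)$ we have $|T_{ni}|<\epsilon$ for every $i$, whence for each fixed $k\ge 1$,
$$\sum_{i=1}^n |T_{ni}|^{2+k}=\sum_{i=1}^n |T_{ni}|^2\,|T_{ni}|^k\le \epsilon^k\sum_{i=1}^n T_{ni}^2.$$
This reduces everything to controlling the second-moment sum $\sum_{i=1}^n T_{ni}^2$, which I would handle by Proposition~\ref{prop:sum_T_secondmoment} with $r=0$: the latter yields $\sum_{i=1}^n T_{ni}^2=\tfrac{n}{16}\delta_n^4 t_n^4\,E_\theta[\ddot{\mathbf{I}}(\theta)]^2+o_p(1)$ uniformly. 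Because $\delta_n=\mathcal{O}(n^{-1/4})$ forces $n\delta_n^4=\mathcal{O}(1)$, because $|t_n|\le M$, and because $\sup_{\theta\in K}E_\theta[\ddot{\mathbf{I}}(\theta)]^2<\infty$ by the uniform integrability of Lemma~\ref{lem:ddot_I2_ui}, the deterministic leading term is bounded uniformly; hence $\sum_{i=1}^n T_{ni}^2$ is bounded in $P_\theta$ probability, uniformly in $\theta$ and $t_n$.

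Finally I would combine the two ingredients through a standard two-parameter argument, taking care with the order of quantifiers. Given a target $\delta>0$ and tolerance $\zeta>0$, I would \emph{first} use the uniform boundedness in probability of $\sum_{i=1}^n T_{ni}^2$ to choose a constant $B$ (depending on $\zeta$ but not on $\epsilon$) with $P_\theta(\sum_{i=1}^n T_{ni}^2>B)<\zeta$ for all large $n$, and \emph{then} pick $\epsilon$ small enough that $\epsilon^k B<\delta$. Splitting over $\widetilde{A}_n(\epsilon)$ and its complement gives
$$P_\theta\!\left(\sum_{i=1}^n |T_{ni}|^{2+k}>\delta\right)\le P_\theta\big(\widetilde{A}_n(\epsilon)^c\big)+P_\theta\!\left(\epsilon^k\sum_{i=1}^n T_{ni}^2>\delta\right)\le P_\theta\big(\widetilde{A}_n(\epsilon)^c\big)+\zeta,$$
and letting $n\to\infty$ makes the first term vanish uniformly, leaving $\limsup_n\sup_{\theta,t_n}P_\theta(\cdot)\le\zeta$. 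Since $\zeta>0$ is arbitrary, this establishes the claimed uniform convergence in $P_\theta$ probability. The only subtlety to respect is precisely this ordering — $B$ must be fixed before $\epsilon$ so that $\epsilon^k B<\delta$ is legitimate — together with tracking the uniformity over $K$ and $|t_n|\le M$, which each invoked result already provides.
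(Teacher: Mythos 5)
Your argument is correct and is essentially the paper's own proof: both rest on the factorization $\sum_i |T_{ni}|^{2+k}\le\bigl(\max_i|T_{ni}|\bigr)^k\sum_i T_{ni}^2$, using Proposition~\ref{prop:A_complement} to make the first factor small and Proposition~\ref{prop:sum_T_secondmoment} (with $n\delta_n^4=\mathcal{O}(1)$) to bound the second in probability. Your write-up merely makes the order of quantifiers explicit where the paper compresses the same choice into the auxiliary function $a(\epsilon')$.
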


\begin{proof}
Let $a(\epsilon')$ be a real valued function on any $\epsilon'>0$ satisfying $\left(\frac{\epsilon'}{a(\epsilon')+\frac{1}{16}n\delta_n^4 t_n^4 E_{\theta}[\ddot{\mathbf{I}}(\theta)]^2}\right)\in (0,1)$. The proof can be completed by noting that by Propositions \ref{prop:A_complement} and \ref{prop:sum_T_secondmoment}, uniformly in $|t_n| \leq M$ and in $\theta\in K \subset \Theta$ for $K$ compact,
$$P_{\theta}\left(\sum_{i=1}^n T_{ni}^2> a(\epsilon')+\frac{1}{16}n\delta_n^4 t_n^4 E_{\theta}[\ddot{\mathbf{I}}(\theta)]^2\right)\rightarrow 0,$$
$$P_{\theta}\left(\max_{1\leq i \leq n} |T_{ni}|>\left(\frac{\epsilon'}{a(\epsilon')+\frac{1}{16}n\delta_n^4 t_n^4 E_{\theta}[\ddot{\mathbf{I}}(\theta)]^2}\right)^{1/k}\right)\rightarrow 0.$$
\end{proof}

\begin{proposition} \label{prop:dotS_fifth}
	We have, for $m\in \{5,6\}$,
	$$\sum_{i=1}^n \left|\left(\delta_n t_n \dot{\mathbf{I}}(Y_i\;;\theta)\right)^m \right| \xrightarrow[]{p} 0,$$
uniformly in $|t_n| \leq M$ and in $\theta\in K \subset \Theta$ for $K$ compact,
	in $P_{\theta}$ probability.
\end{proposition}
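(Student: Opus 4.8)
The plan is to reduce the claim to a first-moment estimate. Because each summand $\big|(\delta_n t_n \dot{\mathbf{I}}(Y_i\;;\theta))^m\big|$ is nonnegative, convergence to zero in $P_{\theta}$ probability follows from convergence of the expectation to zero, via Markov's inequality: for any $\epsilon > 0$,
\begin{equation*}
P_{\theta}\!\left( \sum_{i=1}^n \big|(\delta_n t_n \dot{\mathbf{I}}(Y_i\;;\theta))^m\big| > \epsilon \right) \leq \frac{1}{\epsilon}\, E_{\theta}\sum_{i=1}^n \big|(\delta_n t_n \dot{\mathbf{I}}(Y_i\;;\theta))^m\big|.
\end{equation*}
It therefore suffices to show the right-hand side vanishes uniformly over $\theta \in K$ and $|t_n| \leq M$.

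Next I would factor out the deterministic scaling. Since the $Y_i$ are {\iid} and $\delta_n, t_n$ are nonrandom,
\begin{equation*}
E_{\theta}\sum_{i=1}^n \big|(\delta_n t_n \dot{\mathbf{I}}(Y_i\;;\theta))^m\big| = n\,|\delta_n t_n|^m\, E_{\theta}|\dot{\mathbf{I}}(\theta)|^m.
\end{equation*}
For the moment factor, Assumption \ref{model_assumption}(3) supplies $E_{\theta}|\dot{\mathbf{I}}(\theta)|^6 < \infty$, which handles $m = 6$ directly and, by Lyapunov's inequality $E_{\theta}|\dot{\mathbf{I}}(\theta)|^5 \leq \big(E_{\theta}|\dot{\mathbf{I}}(\theta)|^6\big)^{5/6}$, also handles $m = 5$. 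For the deterministic factor, $|t_n| \leq M$ and $\delta_n = \mathcal{O}(n^{-1/4})$ give $n\,|\delta_n t_n|^m = \mathcal{O}(n^{\,1 - m/4})$, namely $\mathcal{O}(n^{-1/4})$ when $m = 5$ and $\mathcal{O}(n^{-1/2})$ when $m = 6$, both tending to zero. Pairing the vanishing prefactor with a finite moment closes the pointwise argument.

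The step I expect to be the main obstacle is the uniformity in $\theta$, since the argument above needs $\sup_{\theta \in K} E_{\theta}|\dot{\mathbf{I}}(\theta)|^6 < \infty$, whereas Assumption \ref{model_assumption}(3) only asserts finiteness at each fixed $\theta$ and Lemma \ref{lem:ddot_I2_ui} delivers uniform integrability of $|\dot{\mathbf{I}}(\theta)|^4$ rather than of the sixth power. I would obtain the required uniform sixth-moment bound by the same mechanism underlying Lemma \ref{lem:ddot_I2_ui}: using the continuity of $\theta \mapsto s(\theta)$ and its derivatives from Assumption \ref{model_assumption}(1) and (2) together with the compactness of $K$ to bound $\theta \mapsto E_{\theta}|\dot{\mathbf{I}}(\theta)|^6$ on $K$. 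Once this uniform bound is available, the prefactor $n\,|\delta_n t_n|^m$ depends on $t_n$ only through $|t_n|^m \leq M^m$, so the Markov bound is uniform simultaneously over $\theta \in K$ and $|t_n| \leq M$, which is exactly the asserted convergence.
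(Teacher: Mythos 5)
Your proof follows essentially the same route as the paper's: Markov's inequality reduces the claim to showing that $n\,|\delta_n t_n|^m\, E_{\theta}|\dot{\mathbf{I}}(\theta)|^m \to 0$, which follows from $n\delta_n^m=\mathcal{O}(n^{1-m/4})\to 0$ for $m\in\{5,6\}$ together with the sixth-moment bound of Assumption \ref{model_assumption} (and Lyapunov's inequality for $m=5$). Your additional attention to the uniformity over $\theta\in K$ of the moment bound is a point the paper's short proof passes over silently, and the continuity-plus-compactness argument you sketch is the appropriate way to secure it.
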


\begin{proof}
By Assumption \ref{model_assumption} and Markov inequality, for $|t_n| \leq M$, $\epsilon'>0$, and $m\in \{5,6\}$,
	\begin{equation*}
	\begin{split}
P_{\theta}\left( \sum_{i=1}^n \left|\delta_n t_n \dot{\mathbf{I}}(Y_i\;;\theta)\right|^m>\epsilon' \right) \leq & \frac{1}{\epsilon'} E_{\theta}\left( \sum_{i=1}^n \left|\delta_n t_n \dot{\mathbf{I}}(Y_i\;;\theta)\right|^m \right)\\
 \leq & \frac{n (\delta_n)^m M^m}{\epsilon'} E_{\theta}\left|\dot{\mathbf{I}}(Y_i\;;\theta)\right|^m\\
 \rightarrow & \;  0.
	\end{split}
	\end{equation*}
\end{proof}

\begin{proposition}
\label{prop:Tmulti_dotS}
We have, for any $l\geq 2$ and any $k\geq 1$,
	$$\sum_{i=1}^n \left| T_{ni}^l \left(\delta_n t_n \dot{\mathbf{I}}(Y_i\;;\theta)\right)^k \right| \xrightarrow[]{p} 0,$$
uniformly in $|t_n| \leq M$ and in $\theta\in K \subset \Theta$ for $K$ compact,
	in $P_{\theta}$ probability.
\end{proposition}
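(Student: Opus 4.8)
The plan is to separate the two factors by pulling the $\dot{\mathbf{I}}$-part out through a maximum and controlling the $T_{ni}$-part through its aggregate second moment, after first reducing to the high-probability event on which each $|T_{ni}|$ is small. Concretely, I would work on the event $\widetilde{A}_n=\widetilde{A}_n(1)$ from \eqref{eqn:An}, on which $\max_{i}|T_{ni}|<1$; by Proposition~\ref{prop:A_complement} we have $P_{\theta}(\widetilde{A}_n^c)\to 0$ uniformly in $\theta\in K$ and $|t_n|\le M$, so it suffices to establish the convergence on $\widetilde{A}_n$. On this event, since $l\ge 2$ gives $|T_{ni}|^{l-2}\le 1$, one has the pointwise bound $|T_{ni}|^l\le T_{ni}^2$, and hence
\[
\sum_{i=1}^n \bigl|T_{ni}^l\,(\delta_n t_n\dot{\mathbf{I}}(Y_i\;;\theta))^k\bigr| \;\le\; \Bigl(\max_{1\le i\le n}|\delta_n t_n\dot{\mathbf{I}}(Y_i\;;\theta)|\Bigr)^k \sum_{i=1}^n T_{ni}^2 .
\]
This reduces the problem to showing that the first factor tends to zero in probability while the second stays bounded in probability.

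For the second factor, I would invoke Proposition~\ref{prop:sum_T_secondmoment} with $r=0$, which gives $\sum_i \bigl|T_{ni}^2-\tfrac{1}{16}\delta_n^4 t_n^4 E_{\theta}[\ddot{\mathbf{I}}(\theta)]^2\bigr|\xrightarrow{p}0$. Writing $\sum_i T_{ni}^2$ as this vanishing sum plus $n\cdot\tfrac{1}{16}\delta_n^4 t_n^4 E_{\theta}[\ddot{\mathbf{I}}(\theta)]^2$, and noting that $n\delta_n^4=\mathcal{O}(1)$ (since $\delta_n=\mathcal{O}(n^{-1/4})$), $|t_n|\le M$, and $\sup_{\theta\in K}E_{\theta}[\ddot{\mathbf{I}}(\theta)]^2<\infty$ (which follows from the uniform integrability \eqref{eqn:ddot_I2_ui_1} in Lemma~\ref{lem:ddot_I2_ui}), I obtain $\sum_i T_{ni}^2=\mathcal{O}_p(1)$ uniformly in $\theta\in K$ and $|t_n|\le M$.

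For the first factor, it is enough to show the maximal bound $\max_{i}|\dot{\mathbf{I}}(Y_i\;;\theta)|=o_p(n^{1/4})$, since $\delta_n=\mathcal{O}(n^{-1/4})$ then forces $\max_{i}|\delta_n t_n\dot{\mathbf{I}}(Y_i\;;\theta)|\xrightarrow{p}0$, and raising to the $k$-th power preserves this for every $k\ge 1$. By a union bound and a truncated Markov inequality,
\[
\sup_{\theta\in K} P_{\theta}\Bigl(\max_{1\le i\le n}|\dot{\mathbf{I}}(Y_i\;;\theta)|>\epsilon\,n^{1/4}\Bigr) \;\le\; \frac{1}{\epsilon^4}\sup_{\theta\in K}E_{\theta}\bigl[|\dot{\mathbf{I}}(\theta)|^4\,\mathbbm{1}_{\{|\dot{\mathbf{I}}(\theta)|^4>\epsilon^4 n\}}\bigr],
\]
and the right-hand side tends to zero by the uniform integrability of $|\dot{\mathbf{I}}(\theta)|^4$ established in \eqref{eqn:ddot_I2_ui_2} of Lemma~\ref{lem:ddot_I2_ui}. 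Combining the two factors, the product $\bigl(\max_i|\delta_n t_n\dot{\mathbf{I}}|\bigr)^k\sum_i T_{ni}^2$ is $o_p(1)\cdot\mathcal{O}_p(1)=o_p(1)$ uniformly, and adding the bound $P_{\theta}(\widetilde{A}_n^c)\to 0$ completes the argument.

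I expect the main obstacle to be securing the uniform-in-$\theta$ maximal bound: a naive pointwise sixth-moment estimate would yield the sharper rate $\max_i|\dot{\mathbf{I}}(Y_i\;;\theta)|=o_p(n^{1/6})$, but what actually matters is that the weaker $o_p(n^{1/4})$ rate holds \emph{uniformly} over the compact $K$, and this is precisely what the uniform integrability of $|\dot{\mathbf{I}}(\theta)|^4$ in Lemma~\ref{lem:ddot_I2_ui} supplies. Keeping every $\mathcal{O}_p$/$o_p$ statement uniform in both $\theta\in K$ and $|t_n|\le M$ throughout—rather than merely pointwise—is the part that will require the most care.
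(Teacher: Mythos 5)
Your proof is correct, but it takes a genuinely different route from the paper's. The paper splits into the case $l=2$, $1\le k<6$, writes $T_{ni}^2$ as its deterministic limit $\tfrac{1}{16}\delta_n^4t_n^4E_{\theta}[\ddot{\mathbf{I}}(\theta)]^2$ plus a deviation, applies H\"older's inequality in the counting measure with exponents $\tfrac{6}{6-k}$ and $\tfrac{6}{k}$ to pair the deviation sum (controlled by Proposition~\ref{prop:sum_T_secondmoment} for general $r\ge 0$) with $\sum_i|\delta_n t_n\dot{\mathbf{I}}(Y_i\;;\theta)|^6$ (controlled by Proposition~\ref{prop:dotS_fifth}), handles the deterministic part by a Markov bound, and then bootstraps to general $l>2$ and $k\ge 6$ via maxima and Proposition~\ref{prop:A_complement}. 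You instead truncate once on $\widetilde{A}_n(1)$ to reduce every $l\ge 2$ to $l=2$, pull the entire $\dot{\mathbf{I}}$-factor out through $\max_i|\delta_n t_n\dot{\mathbf{I}}(Y_i\;;\theta)|^k$, and are left with $\sum_i T_{ni}^2=\mathcal{O}_p(1)$, which needs only the $r=0$ case of Proposition~\ref{prop:sum_T_secondmoment}. The price is that you must prove the new uniform maximal bound $\max_i|\dot{\mathbf{I}}(Y_i\;;\theta)|=o_p(n^{1/4})$, and your derivation of it---union bound plus the truncated Markov inequality fed by the uniform integrability \eqref{eqn:ddot_I2_ui_2}---is exactly right; as you note, uniformity in $\theta$ is the point where a naive pointwise moment bound would not suffice. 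The payoff is a single unified argument with no case split on $k$, no H\"older step, and no appeal to Proposition~\ref{prop:dotS_fifth}; the paper's version, by contrast, reuses more of its existing machinery. Both arguments are sound and both keep the required uniformity in $\theta\in K$ and $|t_n|\le M$.
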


\begin{proof}
	See Appendix \ref{app:Tmulti_dotS}.
\end{proof}

\begin{proposition}
\label{prop:4th_LLN}
	We have, uniformly in $|t_n| \leq M$ and in $\theta\in K \subset \Theta$ for $K$ compact,
	\begin{align*}
	&\sum_{i=1}^n \left(\delta_n t_n \dot{\mathbf{I}}(Y_i\;;\theta)\right)^4-n\delta_n^4 t_n^4 E_{\theta}[\dot{\mathbf{I}}(\theta)]^4 \xrightarrow[]{a.s.} 0,\\
	&\sum_{i=1}^n \left(\delta_n t_n\right)^4 \ddddot{\mathbf{I}}(Y_i\;;\theta)-n\delta_n^4 t_n^4 E_{\theta}\left[\ddddot{\mathbf{I}}(\theta) \right] \xrightarrow[]{a.s.} 0,\\
&\sum_{i=1}^n \left(\delta_n t_n\right)^4 \dddot{\mathbf{I}}(Y_i\;;\theta)\dot{\mathbf{I}}(Y_i\;;\theta)-n\delta_n^4 t_n^4 E_{\theta}[\dddot{\mathbf{I}}(\theta)\dot{\mathbf{I}}(\theta)] \xrightarrow[]{a.s.} 0,\\
&\sum_{i=1}^n \left(\delta_n t_n\right)^4 \ddot{\mathbf{I}}(Y_i\;;\theta)\dot{\mathbf{I}}^2(Y_i\;;\theta)-n\delta_n^4 t_n^4 E_{\theta}[\ddot{\mathbf{I}}(\theta)\dot{\mathbf{I}}^2(\theta)] \xrightarrow[]{a.s.} 0.
	\end{align*}
\end{proposition}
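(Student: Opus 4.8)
The plan is to treat all four displays at once, since each has the common shape
\[
\sum_{i=1}^n (\delta_n t_n)^4\, g(Y_i\;;\theta)-n\delta_n^4 t_n^4\, E_\theta[g(\theta)],
\qquad g\in\bigl\{\dot{\mathbf{I}}^4,\ \ddddot{\mathbf{I}},\ \dddot{\mathbf{I}}\dot{\mathbf{I}},\ \ddot{\mathbf{I}}\dot{\mathbf{I}}^2\bigr\},
\]
and none of these integrands depends on $t_n$. Factoring out the deterministic scalar $\delta_n^4 t_n^4$ rewrites each expression as
\[
n\delta_n^4 t_n^4\left[\frac{1}{n}\sum_{i=1}^n g(Y_i\;;\theta)-E_\theta[g(\theta)]\right].
\]
Because $\delta_n=\mathcal{O}(n^{-1/4})$ forces $n\delta_n^4=\mathcal{O}(1)$ and $|t_n|\le M$ forces $t_n^4\le M^4$, the prefactor $n\delta_n^4 t_n^4$ is bounded uniformly in $n$, in $t_n$, and in $\theta\in K$. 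Hence each claimed convergence reduces to the uniform strong law of large numbers
\[
\frac{1}{n}\sum_{i=1}^n g(Y_i\;;\theta)-E_\theta[g(\theta)]\xrightarrow[]{a.s.}0,
\qquad\text{uniformly in }\theta\in K,
\]
for each of the four integrands $g$; uniformity in $t_n$ is then automatic, since $t_n$ enters only through the bounded multiplier $t_n^4$.

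First I would verify that each $g$ is $P_\theta$-integrable over the compact $K$ using Assumption \ref{model_assumption}(3). For $g=\ddddot{\mathbf{I}}$ this is the condition $E_\theta|\ddddot{\mathbf{I}}(\theta)|<\infty$ directly; for $g=\dot{\mathbf{I}}^4$ it follows from $E_\theta|\dot{\mathbf{I}}(\theta)|^6<\infty$; for $g=\dddot{\mathbf{I}}\dot{\mathbf{I}}$ a Cauchy--Schwarz bound gives $E_\theta|\dddot{\mathbf{I}}\dot{\mathbf{I}}|\le\bigl(E_\theta|\dddot{\mathbf{I}}|^2\bigr)^{1/2}\bigl(E_\theta|\dot{\mathbf{I}}|^2\bigr)^{1/2}<\infty$; and for $g=\ddot{\mathbf{I}}\dot{\mathbf{I}}^2$ a H\"older bound with exponents $(3,3/2)$ gives $E_\theta|\ddot{\mathbf{I}}\dot{\mathbf{I}}^2|\le\bigl(E_\theta|\ddot{\mathbf{I}}|^3\bigr)^{1/3}\bigl(E_\theta|\dot{\mathbf{I}}|^3\bigr)^{2/3}<\infty$, all covered by Assumption \ref{model_assumption}(3). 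With integrability in place I would invoke Chung's uniform strong law of large numbers, exactly as in the $t_n^2$ analysis of Remark \ref{rem:LAN_RLAN}, to obtain the uniform almost-sure convergence of the centered averages on $K$, and then multiply by the bounded factor $n\delta_n^4 t_n^4$ to conclude.

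The main obstacle is the uniformity in $\theta$, not the pointwise law. To feed Chung's theorem I must certify its regularity hypotheses for each integrand: joint measurability in $(y,\theta)$, continuity of $\theta\mapsto g(Y\;;\theta)$, and a locally integrable envelope near each $\theta\in K$. Continuity of the integrands follows from the existence and continuity of the maps $\theta\mapsto\dot s(\theta),\ddot s(\theta),\dddot s(\theta),\ddddot s(\theta)$ guaranteed by the fourth-order regularity of Definition \ref{def:2nd_smooth_point} together with Assumption \ref{model_assumption}(2), while the dominating envelope can be assembled from the same moment bounds of Assumption \ref{model_assumption}(3), with the uniform-integrability statements of Lemma \ref{lem:ddot_I2_ui} controlling the tails uniformly over $K$. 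Once these conditions are checked, the four displays all follow from the single reduction above.
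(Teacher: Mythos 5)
Your proposal is correct and follows essentially the same route as the paper: factor out the uniformly bounded scalar $n\delta_n^4 t_n^4$, verify integrability of each integrand from Assumption \ref{model_assumption}(3), and invoke Chung's uniform strong law together with Lemma \ref{lem:ddot_I2_ui}. The only cosmetic difference is in the fourth term, where you use H\"older with exponents $(3,3/2)$ while the paper uses the elementary bound $\ddot{\mathbf{I}}\dot{\mathbf{I}}^2\leq \frac{1}{2}\ddot{\mathbf{I}}^2+\frac{1}{2}\dot{\mathbf{I}}^4$; both suffice under the stated moment conditions.
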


\begin{proof}
We complete the proof by  noting that, by
Chung's uniform strong law of large number which can be seen in Theorem A.$7.3$ in \cite{bickel1993efficient} and Lemma \ref{lem:ddot_I2_ui}, one has
\begin{align*}
&\left(\frac{1}{n}\sum_{i=1}^n \left(\dot{\mathbf{I}}(Y_i\;;\theta)\right)^4
-  E_{\theta}[\dot{\mathbf{I}}(\theta)]^4\right) \xrightarrow[]{a.s.} 0,\\
&\left(\frac{1}{n}\sum_{i=1}^n \ddddot{\mathbf{I}}(Y_i\;;\theta)
-  E_{\theta}\left[\ddddot{\mathbf{I}}(\theta) \right]\right) \xrightarrow[]{a.s.} 0,\\
&\left(\frac{1}{n}\sum_{i=1}^n \dddot{\mathbf{I}}(Y_i\;;\theta)\dot{\mathbf{I}}(Y_i\;;\theta)
-  E_{\theta}[\dddot{\mathbf{I}}(\theta)\dot{\mathbf{I}}(\theta)]\right) \xrightarrow[]{a.s.} 0.
\end{align*}
Furthermore, noting that $\ddot{\mathbf{I}}(Y_i\;;\theta)\dot{\mathbf{I}}^2(Y_i\;;\theta)\leq \frac{1}{2}\ddot{\mathbf{I}}^2(Y_i\;;\theta)+\frac{1}{2}\dot{\mathbf{I}}^4(Y_i\;;\theta)$,
 $E_{\theta}\ddot{\mathbf{I}}^2(Y_i\;;\theta)<\infty$ and $E_{\theta}\dot{\mathbf{I}}^4(Y_i\;;\theta)<\infty$ (Assumption \ref{model_assumption}), by 
 Chung's uniform strong law of large number and Lemma \ref{lem:ddot_I2_ui}, one has
 $$\left(\frac{1}{n}\sum_{i=1}^n \ddot{\mathbf{I}}(Y_i\;;\theta)\dot{\mathbf{I}}^2(Y_i\;;\theta)
 -  E_{\theta}[\ddot{\mathbf{I}}(\theta)\dot{\mathbf{I}}^2(\theta)]\right) \xrightarrow[]{a.s.} 0.$$
\end{proof}

\begin{proposition}
\label{prop:summation_Tn}
We have $$\sum_{i=1}^n \bigg(T_{ni} - \frac{1}{4}\delta_n^2 t_n^2 \ddot{\mathbf{I}}(Y_i \;;\theta)-\frac{1}{12}(\delta_n t_n)^3\dddot{\mathbf{I}}(Y_i \;;\theta)-\frac{1}{48}(\delta_n t_n)^4\ddddot{\mathbf{I}}(Y_i \;;\theta)\bigg) \xrightarrow[]{p} 0,$$
uniformly in $|t_n| \leq M$ and in $\theta\in K \subset \Theta$ for $K$ compact,
in $P_{\theta}$ probability.
\end{proposition}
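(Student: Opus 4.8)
The plan is to recognize that the summand, once we subtract its expectation, is a centered i.i.d.\ array whose second moment has already been controlled by Lemma \ref{lem:Tn_second_moment}, so that a direct mean-plus-variance argument via Chebyshev's inequality suffices. Write
$$W_{ni} = T_{ni} - \frac{1}{4}\delta_n^2 t_n^2 \ddot{\mathbf{I}}(Y_i\;;\theta) - \frac{1}{12}(\delta_n t_n)^3 \dddot{\mathbf{I}}(Y_i\;;\theta) - \frac{1}{48}(\delta_n t_n)^4 \ddddot{\mathbf{I}}(Y_i\;;\theta),$$
so that $W_{n1},\dots,W_{nn}$ are i.i.d.\ copies of a single variable $W_n$ and the target is $\sum_{i=1}^n W_{ni}\xrightarrow[]{p}0$. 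The crucial input is equation \eqref{eqn::Tn_second_moment3} of Lemma \ref{lem:Tn_second_moment}, which supplies $E_{\theta}[W_n^2]=o(\delta_n^8)$, uniformly in $\theta\in K$ compact $\subset \Theta$ and $|t_n|\leq M$.

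First I would bound the mean. By the Cauchy--Schwarz inequality, $|E_{\theta}[W_n]|\le (E_{\theta}[W_n^2])^{1/2}=o(\delta_n^4)$, so the expectation of the full sum satisfies $|E_{\theta}[\sum_{i=1}^n W_{ni}]| = n\,|E_{\theta}[W_n]| \le n\cdot o(\delta_n^4)$. Since $\delta_n=\mathcal{O}(n^{-1/4})$ forces $n\delta_n^4=\mathcal{O}(1)$, this quantity is $o(1)$ and converges to zero, uniformly over $K$ and $|t_n|\le M$.

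Next I would bound the variance. Independence gives $\Var_{\theta}[\sum_{i=1}^n W_{ni}]=n\,\Var_{\theta}[W_n]\le n\,E_{\theta}[W_n^2]=n\cdot o(\delta_n^8)$; since $n\delta_n^8=\mathcal{O}(n^{-1})$, this tends to zero uniformly as well. Combining the two estimates, for any fixed $\epsilon>0$ the mean is eventually smaller than $\epsilon/2$ uniformly, so Chebyshev's inequality yields
$$P_{\theta}\left(\left|\sum_{i=1}^n W_{ni}\right|>\epsilon\right)\le \frac{4\,\Var_{\theta}[\sum_{i=1}^n W_{ni}]}{\epsilon^2}\to 0,$$
uniformly in $\theta\in K$ and $|t_n|\le M$, which is the asserted convergence in probability.

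The only genuine subtlety -- and the place where the $n^{-1/4}$ rescaling is essential -- is the bookkeeping of exponents: one needs the fourth-order Taylor remainder for $s(\theta)$, captured by the $o(\delta_n^8)$ second-moment bound of Lemma \ref{lem:Tn_second_moment}, rather than any lower-order remainder. Summing $n$ terms amplifies by a factor $n$, and only the $o(\delta_n^8)$ rate is small enough that \emph{both} $n\cdot o(\delta_n^4)$ (controlling the mean) and $n\cdot o(\delta_n^8)$ (controlling the variance) vanish under $\delta_n=\mathcal{O}(n^{-1/4})$. Since all the heavy analytic work has been deferred to Lemma \ref{lem:Tn_second_moment}, no further estimation is required here, and uniformity is inherited directly from the uniform statement of that lemma.
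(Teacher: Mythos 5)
Your proof is correct and takes essentially the same route as the paper: both arguments defer all the analytic work to the second-moment bound $E_\theta|W_n|^2=o(\delta_n^8)$ of Lemma \ref{lem:Tn_second_moment} and then sum over the $n$ i.i.d.\ terms, with the paper applying Markov's inequality to $E_\theta\left|\sum_i W_{ni}\right|\le n\,E_\theta|W_n|\le n\,(E_\theta|W_n|^2)^{1/2}=n\cdot o(\delta_n^4)=o(1)$ via Jensen. Your split into a Cauchy--Schwarz bound on the mean plus a Chebyshev bound on the variance is a minor variation that proves the same thing (and incidentally shows the fluctuation is $o(n^{-1})$ in variance, more than is needed); the exponent bookkeeping and the uniformity argument match the paper's.
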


\begin{proof}
For any $\epsilon'>0$, by Markov inequality, Jensen's inequality, and Lemma \ref{lem:Tn_second_moment},
\begin{align*}
& P_{\theta}\left\{\left|\sum_{i=1}^n \bigg(T_{ni} - \frac{1}{4}\delta_n^2 t_n^2 \ddot{\mathbf{I}}(Y_i \;;\theta)-\frac{1}{12}(\delta_n t_n)^3\dddot{\mathbf{I}}(Y_i \;;\theta)-\frac{1}{48}(\delta_n t_n)^4\ddddot{\mathbf{I}}(Y_i \;;\theta)\bigg)\right|>\epsilon'\right\}\\
&\leq \frac{n}{\epsilon'}E_{\theta}\left|T_n-\frac{1}{4} (\delta_n t_n)^2 \ddot{\mathbf{I}}(\theta)-\frac{1}{12}(\delta_n t_n)^3\dddot{\mathbf{I}}(\theta)-\frac{1}{48}(\delta_n t_n)^4\ddddot{\mathbf{I}}(\theta)\right|
\\
& \rightarrow \;  0.
\end{align*}

\end{proof}

\begin{proposition}
\label{prop:Tn1_DotS2}
We have
	$$\sum_{i=1}^n \left| \left(T_{ni}- \frac{1}{4}\delta_n^2 t_n^2 \ddot{\mathbf{I}}(Y_i \;;\theta)\right) \left(\delta_n t_n \dot{\mathbf{I}}(Y_i\;;\theta)\right)^2 \right| \xrightarrow[]{p} 0,$$
uniformly in $|t_n| \leq M$ and in $\theta\in K \subset \Theta$ for $K$ compact,
	in $P_{\theta}$ probability.
\end{proposition}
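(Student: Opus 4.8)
The plan is to split off the quadratic part of $T_{ni}$ and apply Cauchy--Schwarz. Write $R_{ni} := T_{ni} - \tfrac{1}{4}\delta_n^2 t_n^2 \ddot{\mathbf{I}}(Y_i\;;\theta)$, so that the summand is $|R_{ni}|\,(\delta_n t_n)^2\dot{\mathbf{I}}^2(Y_i\;;\theta)$. Pulling out the deterministic factor $(\delta_n t_n)^2$ and applying the Cauchy--Schwarz inequality in the index $i$ gives
\[
\sum_{i=1}^n \left| R_{ni}\left(\delta_n t_n \dot{\mathbf{I}}(Y_i\;;\theta)\right)^2 \right|
= \delta_n^2 t_n^2 \sum_{i=1}^n |R_{ni}|\,\dot{\mathbf{I}}^2(Y_i\;;\theta)
\le \delta_n^2 t_n^2 \left(\sum_{i=1}^n R_{ni}^2\right)^{1/2}\left(\sum_{i=1}^n \dot{\mathbf{I}}^4(Y_i\;;\theta)\right)^{1/2}.
\]
This factorizes the problem into one piece controlled by the second-moment bound on $R_n$ (Lemma \ref{lem:Tn_second_moment}) and one piece controlled by a law of large numbers for $\dot{\mathbf{I}}^4$.

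For the first factor, equation \eqref{eqn::Tn_second_moment1} of Lemma \ref{lem:Tn_second_moment} states $E_{\theta}R_n^2 = E_{\theta}\big|T_n - \tfrac14(\delta_n t_n)^2\ddot{\mathbf{I}}(\theta)\big|^2 = o(\delta_n^4)$, uniformly over $\theta\in K$ compact and $|t_n|\le M$. Since the $R_{ni}$ are {\iid} copies of $R_n$, one has $E_{\theta}\sum_i R_{ni}^2 = n\,o(\delta_n^4)$, and Markov's inequality then yields $\sum_i R_{ni}^2 = o_p(n\delta_n^4)$ uniformly, i.e. $\big(\sum_i R_{ni}^2\big)^{1/2} = o_p(n^{1/2}\delta_n^2)$. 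For the second factor, the uniform strong law of large numbers (Chung's SLLN combined with the uniform integrability \eqref{eqn:ddot_I2_ui_2} of Lemma \ref{lem:ddot_I2_ui}, exactly as invoked in the proof of Proposition \ref{prop:4th_LLN}) gives $\tfrac1n\sum_i \dot{\mathbf{I}}^4(Y_i\;;\theta) \to E_{\theta}[\dot{\mathbf{I}}^4(\theta)]$ almost surely and uniformly, with $E_{\theta}[\dot{\mathbf{I}}^4(\theta)]<\infty$ by Assumption \ref{model_assumption} (which supplies $E_{\theta}|\dot{\mathbf{I}}(\theta)|^6<\infty$). Hence $\big(\sum_i \dot{\mathbf{I}}^4(Y_i\;;\theta)\big)^{1/2} = \mathcal{O}_p(n^{1/2})$ uniformly.

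Combining the two factors, the right-hand side of the displayed bound is at most
\[
\delta_n^2 t_n^2 \cdot o_p\big(n^{1/2}\delta_n^2\big)\cdot \mathcal{O}_p\big(n^{1/2}\big) = t_n^2\, o_p\big(n\delta_n^4\big),
\]
which is $o_p(1)$ uniformly because $n\delta_n^4 = \mathcal{O}(1)$ when $\delta_n=\mathcal{O}(n^{-1/4})$ and $t_n^2\le M^2$. The only delicate point is the bookkeeping of the powers of $n$ and $\delta_n$: the argument closes precisely because Lemma \ref{lem:Tn_second_moment} supplies a genuine $o(\delta_n^4)$ rather than merely $\mathcal{O}(\delta_n^4)$, so that the borderline product $n\delta_n^4=\mathcal{O}(1)$ is multiplied by a vanishing factor. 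I would also verify that each estimate invoked (the second-moment bound and the SLLN) is uniform in $(\theta,t_n)$, so that the resulting $o_p(1)$ is uniform as required; this uniformity is inherited directly from the uniform statements of Lemma \ref{lem:Tn_second_moment}, Lemma \ref{lem:ddot_I2_ui}, and the uniform SLLN.
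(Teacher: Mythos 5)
Your proof is correct and rests on the same decomposition ($T_{ni}-\tfrac14\delta_n^2t_n^2\ddot{\mathbf{I}}$ paired with $(\delta_nt_n\dot{\mathbf{I}})^2$) and the same key estimate (equation \eqref{eqn::Tn_second_moment1} of Lemma \ref{lem:Tn_second_moment}) as the paper; the only difference is that you apply Cauchy--Schwarz pathwise over the index $i$ and then need the uniform SLLN to control $\sum_i\dot{\mathbf{I}}^4(Y_i\;;\theta)=\mathcal{O}_p(n)$, whereas the paper first applies Markov's inequality to the whole sum and then H\"older's inequality inside the expectation, so that only the finiteness of $E_{\theta}[\dot{\mathbf{I}}^4(\theta)]$ is required rather than a law of large numbers. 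Both routes close for the identical reason you identify: the genuine $o(\delta_n^4)$ from Lemma \ref{lem:Tn_second_moment} beats the borderline scaling $n\delta_n^4=\mathcal{O}(1)$.
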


\begin{proof}
For any $\epsilon'>0$, by Markov inequality, H\"older's inequality, and Lemma \ref{lem:Tn_second_moment},
\begin{align*}
&\hspace*{-1.5cm} P_{\theta}\left\{\sum_{i=1}^n \left| \left(T_{ni}- \frac{1}{4}\delta_n^2 t_n^2 \ddot{\mathbf{I}}(Y_i \;;\theta)\right) \left(\delta_n t_n \dot{\mathbf{I}}(Y_i\;;\theta)\right)^2 \right|>\epsilon'\right\}\\
\leq &\frac{n}{\epsilon'}E_{\theta}\left|\left(T_{n}- \frac{1}{4}\delta_n^2 t_n^2 \ddot{\mathbf{I}}(\theta)\right) \left(\delta_n t_n \dot{\mathbf{I}}(\theta)\right)^2\right|\\
\leq & \frac{n}{\epsilon'}\left[E_{\theta}\left(T_{n}- \frac{1}{4}\delta_n^2 t_n^2 \ddot{\mathbf{I}}(\theta)\right)^2\right]^{1/2}\left[E_{\theta} \left(\delta_n t_n \dot{\mathbf{I}}(\theta)\right)^4\right]^{1/2}
\\
\rightarrow & \;  0.
\end{align*}
\end{proof}

\begin{proposition}
\label{prop:Tn1_DotS1}
We have
	$$\sum_{i=1}^n \left| \left(T_{ni}- \frac{1}{4}\delta_n^2 t_n^2 \ddot{\mathbf{I}}(Y_i \;;\theta)-\frac{1}{12}(\delta_n t_n)^3\dddot{\mathbf{I}}(Y_i \;;\theta)\right) \left(\delta_n t_n \dot{\mathbf{I}}(Y_i\;;\theta)\right) \right| \xrightarrow[]{p} 0,$$
uniformly in $|t_n| \leq M$ and in $\theta\in K \subset \Theta$ for $K$ compact,
	in $P_{\theta}$ probability.
\end{proposition}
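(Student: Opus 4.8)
The plan is to follow the same Markov--Cauchy--Schwarz strategy used in the proof of Proposition \ref{prop:Tn1_DotS2}, but to invoke the sharper second-moment bound \eqref{eqn::Tn_second_moment2} in place of \eqref{eqn::Tn_second_moment1} from Lemma \ref{lem:Tn_second_moment}. First I would fix $\epsilon'>0$ and use Markov's inequality together with the \iid\ structure of the summands to bound $P_{\theta}\big\{\sum_{i=1}^n|\cdots|>\epsilon'\big\}$ by $\frac{n}{\epsilon'}E_{\theta}\big|\big(T_{n}-\frac{1}{4}\delta_n^2 t_n^2\ddot{\mathbf{I}}(\theta)-\frac{1}{12}(\delta_n t_n)^3\dddot{\mathbf{I}}(\theta)\big)\big(\delta_n t_n\dot{\mathbf{I}}(\theta)\big)\big|$, reducing the problem to a single expectation with the factor $n$ out front.

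Next I would split this expectation by the Cauchy--Schwarz inequality into the product of $\big[E_{\theta}\big(T_{n}-\frac{1}{4}\delta_n^2 t_n^2\ddot{\mathbf{I}}(\theta)-\frac{1}{12}(\delta_n t_n)^3\dddot{\mathbf{I}}(\theta)\big)^2\big]^{1/2}$ and $\big[E_{\theta}\big(\delta_n t_n\dot{\mathbf{I}}(\theta)\big)^2\big]^{1/2}$. By Lemma \ref{lem:Tn_second_moment}, equation \eqref{eqn::Tn_second_moment2}, the first factor is $o(\delta_n^3)$, uniformly on $K$ and over $|t_n|\le M$. For the second factor, since $E_{\theta}[\dot{\mathbf{I}}(\theta)]^2=\mathcal{I}(\theta)$ is finite and uniformly bounded over the compact set $K$ under the regularity assumptions, and $|t_n|\le M$, this factor is $O(\delta_n)$ uniformly.

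Multiplying the two bounds, the expectation is $o(\delta_n^4)$, so the right-hand side is at most $\frac{1}{\epsilon'}\cdot n\cdot o(\delta_n^4)=\frac{1}{\epsilon'}\,o(n\delta_n^4)$. Because $\delta_n=\mathcal{O}(n^{-1/4})$ gives $n\delta_n^4=O(1)$, this quantity is $o(1)$ and converges to zero, which establishes the claimed convergence in $P_{\theta}$ probability, uniformly in $\theta\in K$ and $|t_n|\le M$.

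The only point requiring care---and the reason the statement carries the additional subtracted term $\frac{1}{12}(\delta_n t_n)^3\dddot{\mathbf{I}}(\theta)$ relative to Proposition \ref{prop:Tn1_DotS2}---is the bookkeeping of powers of $\delta_n$. The companion factor here supplies only a single power $\delta_n t_n\dot{\mathbf{I}}(\theta)$, hence $O(\delta_n)$ after taking the $L_2$ norm, one power short of the situation in Proposition \ref{prop:Tn1_DotS2}. To keep the product at $o(\delta_n^4)$, exactly the rate needed to beat the factor $n$, one must extract the improved rate $o(\delta_n^3)$ from the first factor; this is precisely what the refined expansion \eqref{eqn::Tn_second_moment2}, with the cubic correction subtracted, delivers. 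Relying only on \eqref{eqn::Tn_second_moment1} would yield $n\cdot o(\delta_n^3)=o(n^{1/4})$, which does not vanish, so the choice of the correct bound from Lemma \ref{lem:Tn_second_moment} is the crux of the argument.
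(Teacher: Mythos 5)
Your proposal is correct and matches the paper's own proof essentially verbatim: both use Markov's inequality to reduce to $n$ times a single expectation, then Cauchy--Schwarz (H\"older) with the refined bound \eqref{eqn::Tn_second_moment2} giving $o(\delta_n^3)$ for the first factor and the finite Fisher information giving $\mathcal{O}(\delta_n)$ for the second, so that $n\cdot o(\delta_n^4)=o(1)$. Your closing remark correctly identifies why the cubic correction term must be subtracted, which the paper leaves implicit.
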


\begin{proof}
	For any $\epsilon'>0$, by Markov inequality, H\"older's inequality, and Lemma \ref{lem:Tn_second_moment},
	\begin{align*}
	&\hspace*{-1.5cm} P_{\theta}\left\{\sum_{i=1}^n \left| \left(T_{ni}- \frac{1}{4}\delta_n^2 t_n^2 \ddot{\mathbf{I}}(Y_i \;;\theta)-\frac{1}{12}(\delta_n t_n)^3\dddot{\mathbf{I}}(Y_i \;;\theta)\right) \left(\delta_n t_n \dot{\mathbf{I}}(Y_i\;;\theta)\right) \right|>\epsilon'\right\}\\
	\leq &\frac{n}{\epsilon'}E_{\theta}\left|\left(T_{n}- \frac{1}{4}\delta_n^2 t_n^2 \ddot{\mathbf{I}}(\theta)-\frac{1}{12}(\delta_n t_n)^3\dddot{\mathbf{I}}(\theta)\right) \left(\delta_n t_n \dot{\mathbf{I}}(\theta)\right)\right|\\
	\leq & \frac{n}{\epsilon'}\left[E_{\theta}\left(T_{n}- \frac{1}{4}\delta_n^2 t_n^2 \ddot{\mathbf{I}}(\theta)-\frac{1}{12}(\delta_n t_n)^3\dddot{\mathbf{I}}(\theta)\right)^2\right]^{1/2}\left[E_{\theta} \left(\delta_n t_n \dot{\mathbf{I}}(\theta)\right)^2\right]^{1/2}
	\\
	\rightarrow & \;  0.
	\end{align*}
\end{proof}

\begin{proposition}
\label{prop:Tn1_DotS34}
We have, for any $k\geq 3$,
	$$\sum_{i=1}^n \left| T_{ni} \left(\delta_n t_n \dot{\mathbf{I}}(Y_i\;;\theta)\right)^k \right| \xrightarrow[]{p} 0,$$
uniformly in $|t_n| \leq M$ and in $\theta\in K \subset \Theta$ for $K$ compact,
	in $P_{\theta}$ probability.
\end{proposition}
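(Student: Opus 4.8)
The plan is to reduce the claim to the two facts that a maximum of rescaled scores vanishes and that a lower-order sum stays bounded, thereby sidestepping the single Cauchy--Schwarz step used in Propositions~\ref{prop:Tn1_DotS2} and \ref{prop:Tn1_DotS1}. That step would estimate $\sum_i T_{ni}(\delta_n t_n\dot{\mathbf{I}}(Y_i\;;\theta))^k$ by $n(E_\theta T_n^2)^{1/2}\big(E_\theta(\delta_n t_n\dot{\mathbf{I}}(\theta))^{2k}\big)^{1/2}$ and so would call for the moment $E_\theta|\dot{\mathbf{I}}(\theta)|^{2k}$; this is available only up to order six, so the direct route succeeds for $k=3$ but breaks down for $k\ge 4$, which is exactly the regime the fifth-order Taylor remainder in \eqref{eqn:taylor_usage} forces upon us.

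First I would record the deterministic inequality, valid for every realization and every $k\ge 3$,
$$\sum_{i=1}^n\Big|T_{ni}\big(\delta_n t_n\dot{\mathbf{I}}(Y_i\;;\theta)\big)^k\Big|\le\Big(\max_{1\le j\le n}\big|\delta_n t_n\dot{\mathbf{I}}(Y_j\;;\theta)\big|\Big)^{k-2}\sum_{i=1}^n\Big|T_{ni}\big(\delta_n t_n\dot{\mathbf{I}}(Y_i\;;\theta)\big)^2\Big|,$$
obtained by factoring $|\delta_n t_n\dot{\mathbf{I}}(Y_i\;;\theta)|^{k}=|\delta_n t_n\dot{\mathbf{I}}(Y_i\;;\theta)|^{2}\,|\delta_n t_n\dot{\mathbf{I}}(Y_i\;;\theta)|^{k-2}$ and bounding the last factor by its maximum, which is legitimate since $k-2\ge 1$. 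It then suffices to show, uniformly in $\theta\in K$ compact $\subset\Theta$ and $|t_n|\le M$, that the maximum factor converges to zero in $P_\theta$ probability while the residual sum is bounded in $P_\theta$ probability.

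For the maximum factor, a union bound, Markov's inequality, and the sixth-moment hypothesis of Assumption~\ref{model_assumption} give, for each $c>0$,
$$P_\theta\Big(\max_{1\le j\le n}\big|\delta_n t_n\dot{\mathbf{I}}(Y_j\;;\theta)\big|>c\Big)\le n\,P_\theta\big(|\dot{\mathbf{I}}(\theta)|>c/(M\delta_n)\big)\le\frac{n\delta_n^6 M^6}{c^6}\,E_\theta|\dot{\mathbf{I}}(\theta)|^6,$$
and since $\delta_n=\mathcal{O}(n^{-1/4})$ we have $n\delta_n^6=\mathcal{O}(n^{-1/2})\to 0$, so $\max_{1\le j\le n}|\delta_n t_n\dot{\mathbf{I}}(Y_j\;;\theta)|\xrightarrow[]{p}0$ and hence its $(k-2)$-th power does as well. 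For the residual sum, Markov's inequality followed by Cauchy--Schwarz yields
$$E_\theta\sum_{i=1}^n\Big|T_{ni}\big(\delta_n t_n\dot{\mathbf{I}}(Y_i\;;\theta)\big)^2\Big|\le n\,(E_\theta T_n^2)^{1/2}\big(E_\theta(\delta_n t_n\dot{\mathbf{I}}(\theta))^4\big)^{1/2},$$
where \eqref{eqn::Tn_second_moment1} together with $E_\theta[\ddot{\mathbf{I}}(\theta)]^2<\infty$ gives $E_\theta T_n^2=\mathcal{O}(\delta_n^4)$ and $E_\theta|\dot{\mathbf{I}}(\theta)|^4<\infty$ gives the second factor as $\mathcal{O}(\delta_n^2)$, so the expectation is $\mathcal{O}(n\delta_n^4)=\mathcal{O}(1)$; this forces the residual sum to be bounded in probability. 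Combining the vanishing factor with the stochastically bounded factor proves the proposition for all $k\ge 3$.

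I expect the main obstacle to be uniformity over the compact set $K$: the residual-sum bound and the power reductions inherit uniformity from Lemma~\ref{lem:Tn_second_moment} and the uniform integrability statements of Lemma~\ref{lem:ddot_I2_ui}, but the maximum estimate above needs $\sup_{\theta\in K}E_\theta|\dot{\mathbf{I}}(\theta)|^6<\infty$, a uniform sixth-moment control that is only asserted pointwise in Assumption~\ref{model_assumption}; securing it (by continuity of $\theta\mapsto E_\theta|\dot{\mathbf{I}}(\theta)|^6$ on $K$, or by an extension of Lemma~\ref{lem:ddot_I2_ui}) is the one step requiring care, while the remaining estimates are routine.
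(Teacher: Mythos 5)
Your proof is correct, but it takes a different route from the paper's. The paper applies Young's inequality $|ab|\le\tfrac12 a^2+\tfrac12 b^2$ with the exponent split $k=(k-\tfrac52)+\tfrac52$, bounding the sum by $\tfrac12\sum_i T_{ni}^2|\delta_n t_n\dot{\mathbf{I}}(Y_i\;;\theta)|^{2k-5}+\tfrac12\sum_i|\delta_n t_n\dot{\mathbf{I}}(Y_i\;;\theta)|^5$, and then simply cites Propositions \ref{prop:Tmulti_dotS} and \ref{prop:dotS_fifth}; you instead factor out $(\max_j|\delta_n t_n\dot{\mathbf{I}}(Y_j\;;\theta)|)^{k-2}$ and pair a vanishing maximum with a residual sum that you show is $\mathcal{O}_p(1)$ via Cauchy--Schwarz on $E_\theta|T_n(\delta_n t_n\dot{\mathbf{I}}(\theta))^2|$. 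Both arguments are sound; yours is more self-contained (it does not route through the H\"older machinery of Proposition \ref{prop:Tmulti_dotS}) and makes explicit why moments beyond order six are never needed, whereas the paper's is a two-line reduction to already-established results. Your diagnosis of why the naive Cauchy--Schwarz of Propositions \ref{prop:Tn1_DotS2} and \ref{prop:Tn1_DotS1} fails for $k\ge 4$ is accurate. On the uniformity worry you raise: it is real but no worse than what the paper itself incurs in Proposition \ref{prop:dotS_fifth}, which likewise uses $E_\theta|\dot{\mathbf{I}}(\theta)|^6$ pointwise; moreover you can avoid the sixth moment entirely in the maximum estimate by writing $P_\theta(\max_j|\delta_n t_n\dot{\mathbf{I}}(Y_j\;;\theta)|>c)\le n(M\delta_n/c)^4\,E_\theta\big[|\dot{\mathbf{I}}(\theta)|^4\mathbbm{1}_{\{|\dot{\mathbf{I}}(\theta)|\ge c/(M\delta_n)\}}\big]$, which tends to zero uniformly on $K$ by equation \eqref{eqn:ddot_I2_ui_2} of Lemma \ref{lem:ddot_I2_ui} because $n\delta_n^4=\mathcal{O}(1)$ and the truncation level diverges. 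With that substitution every step of your argument is uniform in $\theta\in K$ and $|t_n|\le M$.
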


\begin{proof}
Note that
\begin{equation*}
\begin{split}
\sum_{i=1}^n \left| T_{ni} \left(\delta_n t_n \dot{\mathbf{I}}(Y_i\;;\theta)\right)^k \right|
 \leq & \frac{1}{2}\sum_{i=1}^n (T_{ni})^2 \left|\delta_n t_n \dot{\mathbf{I}}(Y_i\;;\theta)\right|^{2k-5}  +\frac{1}{2}\sum_{i=1}^n \left|\delta_n t_n \dot{\mathbf{I}}(Y_i\;;\theta)\right|^{5}.
\end{split}
\end{equation*}
By Proposition \ref{prop:Tmulti_dotS} and Proposition \ref{prop:dotS_fifth}, we complete the proof.
\end{proof}

\begin{proposition}
\label{prop:U_CLT}
	Define 
$$U(\theta)=U(Y_i\;;\theta)=\dot{\mathbf{I}}^2(Y_i\;;\theta)-{\mathcal{I}}(\theta).$$ 
We have
\begin{equation}
\label{eqn:prop_CLT}
\mathbf{L}_{\theta}\left(\frac{1}{\sqrt{n}}\sum_{i=1}^n U(Y_i\;;\theta)\right)\rightarrow N(0,\Var_{\theta}[\dot{\mathbf{I}}^2(\theta)]),
\end{equation}
uniformly in $\theta \in K$ for compact $K \in \Theta$ in the weak topology, where $N(0,\Var_{\theta}[\dot{\mathbf{I}}^2(\theta)])$ is the
normal distribution with mean $0$ and covariance matrix $\Var_{\theta}[\dot{\mathbf{I}}^2(\theta)]$. 
\end{proposition}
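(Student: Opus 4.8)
The plan is to read \eqref{eqn:prop_CLT} as a \emph{uniform} central limit theorem for the sum of the {\iid} summands $U(Y_i\;;\theta)=\dot{\mathbf{I}}^2(Y_i\;;\theta)-\mathcal{I}(\theta)$, and to reduce it to a uniform Lindeberg--Feller condition that is supplied by the moment and uniform-integrability machinery already established. First I would record the elementary facts. Since $\mathcal{I}(\theta)=E_\theta[\dot{\mathbf{I}}(\theta)]^2$ is by definition the second moment of $\dot{\mathbf{I}}(\theta)$, each summand is centered, $E_\theta[U(\theta)]=0$, and its variance is exactly $\sigma^2(\theta):=\Var_\theta[\dot{\mathbf{I}}^2(\theta)]=E_\theta[\dot{\mathbf{I}}^4(\theta)]-\mathcal{I}^2(\theta)$. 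Thus $\frac{1}{\sqrt n}\sum_i U(Y_i\;;\theta)$ is a normalized sum of {\iid} mean-zero variables with the correct target variance, and the only genuine issue is uniformity in $\theta\in K$.

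Next I would establish the two ingredients needed for a uniform CLT. \textbf{Uniform control of the variance.} From the uniform integrability of $\{|\dot{\mathbf{I}}(\theta)|^4:\theta\in K\}$ in Lemma~\ref{lem:ddot_I2_ui} (equation \eqref{eqn:ddot_I2_ui_2}) one gets $\sup_{\theta\in K}E_\theta[\dot{\mathbf{I}}^4(\theta)]<\infty$, hence $\sup_{\theta\in K}\sigma^2(\theta)<\infty$; together with pointwise non-degeneracy of $\dot{\mathbf{I}}^2(\theta)$ (guaranteed by the non-zero second and fourth moment conditions of Definition~\ref{def:2nd_smooth_point}) and continuity of $\theta\mapsto\sigma^2(\theta)$ (which follows from the smoothness in Assumption~\ref{model_assumption} together with the uniform integrability just used), compactness of $K$ yields $0<\underline{\sigma}^2:=\inf_{\theta\in K}\sigma^2(\theta)\le\sup_{\theta\in K}\sigma^2(\theta)<\infty$. \textbf{Uniform Lindeberg condition.} Using $U^2(\theta)\le 2\dot{\mathbf{I}}^4(\theta)+2\mathcal{I}^2(\theta)$ and again \eqref{eqn:ddot_I2_ui_2}, the family $\{U^2(\theta):\theta\in K\}$ is uniformly integrable, so $\sup_{\theta\in K}E_\theta[U^2(\theta)\mathbbm{1}_{\{U^2(\theta)>\lambda\}}]\to0$ as $\lambda\to\infty$. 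Taking the threshold $\lambda=\epsilon^2 n\,\underline{\sigma}^2$ and using $\sigma^2(\theta)\ge\underline{\sigma}^2$ gives, for every $\epsilon>0$,
$$\sup_{\theta\in K}\frac{1}{\sigma^2(\theta)}E_\theta\!\left[U^2(\theta)\,\mathbbm{1}_{\{|U(\theta)|>\epsilon\sqrt n\,\sigma(\theta)\}}\right]\le \frac{1}{\underline{\sigma}^2}\sup_{\theta\in K}E_\theta\!\left[U^2(\theta)\,\mathbbm{1}_{\{U^2(\theta)>\epsilon^2 n\,\underline{\sigma}^2\}}\right]\longrightarrow0,$$
which is the Lindeberg condition, holding uniformly over $K$.

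Finally I would invoke the Lindeberg--Feller CLT with the uniform bounds above to conclude convergence of the standardized sum $\frac{1}{\sqrt n\,\sigma(\theta)}\sum_i U(Y_i\;;\theta)$ to $N(0,1)$ uniformly in $\theta\in K$, and then undo the standardization by $\sigma(\theta)$; since $\sigma(\theta)$ is bounded and bounded away from zero on $K$, this transfers to uniform weak convergence of $\frac{1}{\sqrt n}\sum_i U(Y_i\;;\theta)$ to $N(0,\sigma^2(\theta))$, e.g.\ measured in the Lévy metric. Alternatively, if one strengthens the moment control to a uniform sixth moment $\sup_{\theta\in K}E_\theta|\dot{\mathbf{I}}(\theta)|^6<\infty$ (available from Assumption~\ref{model_assumption}(3) under an additional uniform-integrability argument for $|\dot{\mathbf{I}}|^6$), then $\sup_{\theta\in K}E_\theta|U(\theta)|^3<\infty$ and a uniform Berry--Esseen bound $C\,E_\theta|U|^3/(\sigma^3(\theta)\sqrt n)\le C'/\sqrt n$ gives the same conclusion with an explicit $O(n^{-1/2})$ rate in the Kolmogorov distance, which dominates the Lévy metric.

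The main obstacle is the uniformity rather than the pointwise CLT, which is entirely standard. Concretely, the two delicate points are securing the uniform lower bound $\underline{\sigma}^2>0$ (requiring pointwise non-degeneracy of $\dot{\mathbf{I}}^2(\theta)$ and continuity of $\theta\mapsto\sigma^2(\theta)$ across the compact set $K$) and verifying the uniform Lindeberg control with the $n$-growing threshold; both hinge on the uniform-integrability estimates of Lemma~\ref{lem:ddot_I2_ui} and the moment bounds of Assumption~\ref{model_assumption}, so no genuinely new estimate beyond what the preliminary analysis provides is needed.
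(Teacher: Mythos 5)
Your proposal is correct and takes essentially the same approach as the paper: both derive a uniform Lindeberg condition from the uniform integrability of $\{|\dot{\mathbf{I}}(\theta)|^4:\theta\in K\}$ supplied by Lemma~\ref{lem:ddot_I2_ui} together with continuity of the variance on the compact $K$, the only difference being that the paper packages the uniformity as a subsequence--contradiction argument invoking Theorem~A.7.4 of \cite{bickel1993efficient} and the Cram\'er--Wold device, while you verify the Lindeberg condition directly with a supremum over $K$. The one point to tighten is your claimed lower bound $\underline{\sigma}^2>0$: the non-zero moment conditions of Definition~\ref{def:2nd_smooth_point} do not exclude $\Var_\theta[\dot{\mathbf{I}}^2(\theta)]=0$ (i.e.\ $\dot{\mathbf{I}}^2(\theta)$ almost surely constant), but in that degenerate case $U(\theta)=0$ almost surely and the claim is trivial, so the argument survives after treating that case separately.
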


\begin{proof}
See Appendix \ref{app:U_CLT}.
\end{proof}

\begin{proposition}
\label{prop:V_CLT}
	Define 
$$V(Y_i\;;\theta)=\ddot{\mathbf{I}}(Y_i \;;\theta) -E_{\theta}[\ddot{\mathbf{I}}(Y_i \;;\theta)].$$ 
We have
\begin{equation}
\label{eqn:prop_CLT2}
\mathbf{L}_{\theta}\left(\frac{1}{\sqrt{n}}\sum_{i=1}^n V(Y_i\;;\theta)\right)\rightarrow N(0,\Var[\ddot{\mathbf{I}}(\theta)]),
\end{equation}
uniformly in $\theta \in K$ for compact $K \in \Theta$ in the weak topology, where $N(0,\Var[\ddot{\mathbf{I}}(\theta)]))$ is the
normal distribution with mean $0$ and covariance matrix $\Var[\ddot{\mathbf{I}}(\theta)]$. 
\end{proposition}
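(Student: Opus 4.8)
The plan is to recognize this as a uniform (in $\theta$) central limit theorem for the i.i.d.\ triangular array generated by the centered variables $V(Y_i\;;\theta) = \ddot{\mathbf{I}}(Y_i\;;\theta) - E_\theta[\ddot{\mathbf{I}}(\theta)]$, and to mirror the argument already used for Proposition \ref{prop:U_CLT}: the two statements are structurally identical, with $\dot{\mathbf{I}}^2$ and $\mathcal{I}(\theta)$ replaced by $\ddot{\mathbf{I}}$ and $E_\theta[\ddot{\mathbf{I}}(\theta)]$. First I would record the elementary facts that $E_\theta[V(Y_i\;;\theta)] = 0$ by construction and that $\Var_\theta[\ddot{\mathbf{I}}(\theta)] = E_\theta[\ddot{\mathbf{I}}(\theta)]^2 - (E_\theta[\ddot{\mathbf{I}}(\theta)])^2$ is finite, since Assumption \ref{model_assumption}(3) gives $E_\theta|\ddot{\mathbf{I}}(\theta)|^3 < \infty$ and hence a fortiori $E_\theta[\ddot{\mathbf{I}}(\theta)]^2 < \infty$. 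Thus for each fixed $\theta$ the classical Lindeberg--L\'evy CLT already yields $\frac{1}{\sqrt{n}}\sum_{i=1}^n V(Y_i\;;\theta) \to N(0,\Var_\theta[\ddot{\mathbf{I}}(\theta)])$; the real content is the uniformity over $\theta \in K$.

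To obtain uniformity I would work with the triangular array $X_{n,i}(\theta) := V(Y_i\;;\theta)/\sqrt{n}$, which within each row is i.i.d.\ with mean zero and $\sum_{i=1}^n \Var_\theta(X_{n,i}(\theta)) = \Var_\theta[\ddot{\mathbf{I}}(\theta)]$, and verify the Lindeberg condition uniformly in $\theta \in K$. The key input is Lemma \ref{lem:ddot_I2_ui}, whose equation \eqref{eqn:ddot_I2_ui_1} supplies the uniform integrability $\lim_{\lambda\to\infty}\sup_{\theta\in K} E_\theta[|\ddot{\mathbf{I}}(\theta)|^2 \mathbbm{1}_{\{|\ddot{\mathbf{I}}(\theta)|\geq \lambda\}}] = 0$. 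Since $V = \ddot{\mathbf{I}} - E_\theta[\ddot{\mathbf{I}}]$ differs from $\ddot{\mathbf{I}}$ by a shift that is bounded over $K$ (indeed $|E_\theta[\ddot{\mathbf{I}}]| \leq (E_\theta|\ddot{\mathbf{I}}|^2)^{1/2}$, which is uniformly bounded by the same uniform integrability), this transfers to uniform integrability of $V^2$; then, because $\epsilon\sqrt{n}\to\infty$, for any fixed $\lambda$ and all large $n$ one has $\epsilon\sqrt{n} > \lambda$, so $\sup_{\theta\in K} E_\theta[V^2 \mathbbm{1}_{\{|V|>\epsilon\sqrt{n}\}}] \leq \sup_{\theta\in K} E_\theta[V^2 \mathbbm{1}_{\{|V|>\lambda\}}]$, which can be driven to zero by taking $\lambda$ large. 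This establishes the uniform Lindeberg condition.

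Finally I would upgrade the uniform Lindeberg condition to uniform weak convergence exactly as in the proof of Proposition \ref{prop:U_CLT}, via the characteristic-function route: the standard Lindeberg--Feller expansion bounds $|\phi_{n,\theta}(u) - e^{-u^2 \Var_\theta[\ddot{\mathbf{I}}(\theta)]/2}|$ on each compact $u$-interval by a quantity controlled solely through the common variance and the Lindeberg sum, both of which are uniform in $\theta$ by the previous step and by the continuity of $\theta\mapsto\Var_\theta[\ddot{\mathbf{I}}(\theta)]$ on the compact $K$ (which follows from the regularity/continuity built into the model together with the uniform integrability just established). Uniform convergence of the characteristic functions on compact $u$-sets, combined with the uniform tightness furnished by the uniformly bounded second moments, yields convergence in a metric for weak convergence (for instance the bounded-Lipschitz metric) uniformly over $K$, which is the assertion.

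The step I expect to be the main obstacle is precisely this last passage from a uniform Lindeberg condition to genuinely uniform weak convergence: pointwise CLTs do not automatically patch together uniformly when the summand law depends on $\theta$, so one must either push the uniform characteristic-function bound through L\'evy's continuity theorem with explicit control of the remainder, or---if one prefers a cleaner route---establish $\sup_{\theta\in K} E_\theta|V(Y_i\;;\theta)|^3 < \infty$ and invoke the Berry--Esseen inequality to bound the Kolmogorov distance by $C \sup_{\theta\in K}\big(E_\theta|V|^3/\Var_\theta[\ddot{\mathbf{I}}]^{3/2}\big)\, n^{-1/2}$, the latter route additionally requiring $\inf_{\theta\in K}\Var_\theta[\ddot{\mathbf{I}}(\theta)] > 0$. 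I would anticipate following whichever of these two routes the companion proof of Proposition \ref{prop:U_CLT} adopts, so that the two arguments remain parallel.
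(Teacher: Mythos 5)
Your proposal is correct in substance and uses exactly the same analytic ingredients as the paper: the uniform integrability $\lim_{\lambda\to\infty}\sup_{\theta\in K}E_\theta[|\ddot{\mathbf{I}}(\theta)|^2\mathbbm{1}_{\{|\ddot{\mathbf{I}}(\theta)|\ge\lambda\}}]=0$ from Lemma \ref{lem:ddot_I2_ui}, its transfer to $V^2$, a Lindeberg-type condition for the triangular array, and the continuity of $\theta\mapsto E_\theta[V^2(\theta)]$ (which the paper obtains from the continuity of $E_\theta[\ddot{\mathbf{I}}(\theta)]^2$ already established in the proof of Lemma \ref{lem:ddot_I2_ui} together with a separate Cauchy--Schwarz argument for the continuity of $E_\theta[\ddot{\mathbf{I}}(\theta)]=2\int\ddot{s}(\theta)s(\theta)\,d\mu$). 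Where you genuinely diverge is in how uniformity over $K$ is secured. You attack it head-on, via a uniform Lindeberg condition followed by uniform control of characteristic functions (or Berry--Esseen), and you correctly identify this upgrade as the delicate step. The paper instead argues by contradiction: if uniformity fails, extract $\theta_n\to\theta$ in the compact $K$ along which $E_{\theta_n}f(n^{-1/2}\sum_i V(Y_i;\theta_n))-E_{\theta_n}f(Z)$ stays bounded away from zero, then apply a triangular-array normal approximation bound (Theorem A.7.4 of \cite{bickel1993efficient}) to the normalized variables $tV(Y_i;\theta_n)/\sigma_n$ along that single sequence, use continuity of the variance to identify the limit, and invoke the Cram\'er--Wold device to reach a contradiction. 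This subsequence device is precisely the standard way to sidestep the obstacle you flag: it reduces uniformity over $K$ to a pointwise statement along an arbitrary convergent sequence of parameters, where the quantitative normal-approximation bound applies directly, at the cost of being nonconstructive. Your direct route buys explicit rates (via Berry--Esseen) but needs the additional hypotheses you note, namely $\sup_{\theta\in K}E_\theta|V|^3<\infty$ and $\inf_{\theta\in K}\Var_\theta[\ddot{\mathbf{I}}(\theta)]>0$; the latter is not explicitly assumed in the paper (Definition \ref{def:2nd_smooth_point} only requires a non-zero second moment of $\ddot{\mathbf{I}}$ at each point), so if you pursue the Berry--Esseen variant you should either add that hypothesis or handle the degenerate-variance case separately. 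Either route, carried out carefully, yields the proposition.
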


\begin{proof}
See Appendix \ref{app:V_CLT}.
\end{proof}

\begin{proposition}
\label{prop:DotS_3}
We have, uniformly in $|t_n| \leq M$ and in $\theta\in K \subset \Theta$ for $K$ compact,
in $P_{\theta}$ probability,
\begin{align}
&\sum_{i=1}^n \delta_n^3 t_n^3 \dot{\mathbf{I}}^3(Y_i\;;\theta)-\sum_{i=1}^n \delta_n^3 t_n^3 E_{\theta}[\dot{\mathbf{I}}^3(Y_i\;;\theta)]\xrightarrow[]{p} 0,\label{eqn:DotS_3_1}\\
&\sum_{i=1}^n \delta_n^3 t_n^3 \dddot{\mathbf{I}}(Y_i \;;\theta) -\sum_{i=1}^n \delta_n^3 t_n^3 E_{\theta}[\dddot{\mathbf{I}}(Y_i \;;\theta) ]\xrightarrow[]{p} 0,\label{eqn:DotS_3_3}\\
&\sum_{i=1}^n \delta_n^3 t_n^3 \ddot{\mathbf{I}}(Y_i \;;\theta) \dot{\mathbf{I}}(Y_i\;;\theta)-\sum_{i=1}^n \delta_n^3 t_n^3 E_{\theta}[\ddot{\mathbf{I}}(Y_i \;;\theta) \dot{\mathbf{I}}(Y_i\;;\theta)]\xrightarrow[]{p} 0,\label{eqn:DotS_3_2}.
\end{align}
\end{proposition}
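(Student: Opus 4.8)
The plan is to handle the three statements in one stroke, since each is of the common form $\delta_n^3 t_n^3 \sum_{i=1}^n \big( f(Y_i\;;\theta) - E_\theta[f(\theta)] \big)$ with $f$ equal to $\dot{\mathbf{I}}^3(\theta)$, $\dddot{\mathbf{I}}(\theta)$, or $\ddot{\mathbf{I}}(\theta)\dot{\mathbf{I}}(\theta)$ respectively; because the $Y_i$ are {\iid}, each expression is a rescaled sum of {\iid} \emph{centered} random variables. The controlling heuristic is a variance count: a centered {\iid} sum of $n$ terms has standard deviation of order $\sqrt{n}$, so multiplication by $\delta_n^3 = \mathcal{O}(n^{-3/4})$ produces something of order $n^{-3/4}\cdot n^{1/2} = n^{-1/4}\to 0$. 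Centering is what makes this work: the uncentered object $\delta_n^3 \sum_i f(Y_i\;;\theta)$ is of order $n\delta_n^3 = n^{1/4}\to\infty$. This also explains the division of labor among the preceding propositions, since $n\delta_n^4 = \mathcal{O}(1)$ forces Proposition~\ref{prop:4th_LLN} to retain its mean term, whereas $n\delta_n^5 = \mathcal{O}(n^{-1/4})\to 0$ lets Proposition~\ref{prop:dotS_fifth} discard everything.

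To make this rigorous I would invoke Chebyshev's inequality. Writing $f$ for the generic summand and $\sigma_\theta^2 := \Var_\theta(f)$, independence of the $Y_i$ yields
\begin{align*}
&P_\theta\!\left( \Big| \delta_n^3 t_n^3 \sum_{i=1}^n \big(f(Y_i\;;\theta) - E_\theta f\big) \Big| > \epsilon' \right)\\
&\qquad\leq \frac{\delta_n^6 t_n^6 \, n \, \sigma_\theta^2}{(\epsilon')^2}
\leq \frac{\delta_n^6 M^6 \, n \, \sup_{\theta \in K}\sigma_\theta^2}{(\epsilon')^2}.
\end{align*}
Since $\delta_n = \mathcal{O}(n^{-1/4})$ gives $\delta_n^6 n = \mathcal{O}(n^{-1/2})$, the right-hand side is $\mathcal{O}(n^{-1/2})$ as soon as $\sup_{\theta\in K}\sigma_\theta^2 < \infty$, and the bound does not involve $t_n$ or $\theta$ beyond the stated ranges; this is exactly the claimed convergence, uniform in $|t_n|\leq M$ and $\theta\in K$.

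It remains to verify that each summand has a finite, uniformly-over-$K$ bounded second moment. For $f = \dot{\mathbf{I}}^3(\theta)$ this is $E_\theta[\dot{\mathbf{I}}^6(\theta)]<\infty$ and for $f=\dddot{\mathbf{I}}(\theta)$ it is $E_\theta[\dddot{\mathbf{I}}^2(\theta)]<\infty$, both granted pointwise by Assumption~\ref{model_assumption}(3). For the cross term $f=\ddot{\mathbf{I}}(\theta)\dot{\mathbf{I}}(\theta)$ I would apply Hölder's inequality with conjugate exponents $3/2$ and $3$,
$$E_\theta\big[\ddot{\mathbf{I}}^2(\theta)\dot{\mathbf{I}}^2(\theta)\big] \leq \big(E_\theta|\ddot{\mathbf{I}}(\theta)|^3\big)^{2/3}\big(E_\theta|\dot{\mathbf{I}}(\theta)|^6\big)^{1/3} < \infty,$$
again using the moment bounds of Assumption~\ref{model_assumption}(3). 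The main obstacle is promoting these pointwise bounds to the uniform bound $\sup_{\theta\in K}\sigma_\theta^2<\infty$ actually used above: Lemma~\ref{lem:ddot_I2_ui} already supplies the analogous uniform integrability (hence finite suprema over $K$) of $\dot{\mathbf{I}}^4(\theta)$ and $\ddot{\mathbf{I}}^2(\theta)$, and I expect the uniform control of $\dot{\mathbf{I}}^6(\theta)$, $\ddot{\mathbf{I}}^3(\theta)$ and $\dddot{\mathbf{I}}^2(\theta)$ over $K$ needed here to follow from the same continuity of $\theta\mapsto s(\theta)$ and its derivatives into $\mathcal{L}_2(\mu)$ together with compactness of $K$.
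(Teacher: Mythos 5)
Your proof follows essentially the same route as the paper's: Chebyshev's inequality with the variance count $n\delta_n^6 = \mathcal{O}(n^{-1/2})$, finiteness of the relevant second moments from the moment conditions in Assumption~\ref{model_assumption}, and H\"older's inequality with conjugate exponents $3/2$ and $3$ for the cross term $\ddot{\mathbf{I}}(\theta)\dot{\mathbf{I}}(\theta)$. Your explicit observation that the uniform-in-$\theta$ conclusion requires $\sup_{\theta\in K}\sigma_\theta^2<\infty$ rather than mere pointwise finiteness is a point the paper's own proof passes over silently; you flag it correctly and the continuity-plus-compactness argument you sketch is the right way to close it.
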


\begin{proof}
We firstly prove equation \eqref{eqn:DotS_3_1}. 
Since $E_{\theta}(\dot{\mathbf{I}}^6(\theta))$ is finite by Assumption \ref{model_assumption}, we have $\Var_{\theta} (\dot{\mathbf{I}}^3(\theta))<\infty.$
By Chebyshev's inequality and independence of data samples, we have that for any $\epsilon'>0$
\begin{align*}
&  P_{\theta}\left[\left|\sum_{i=1}^n \left(\delta_n t_n \dot{\mathbf{I}}(Y_i\;;\theta)\right)^3-\sum_{i=1}^n E_{\theta}\left(\delta_n t_n \dot{\mathbf{I}}(Y_i\;;\theta)\right)^3\right|>\epsilon'\right]\\
&\leq  \frac{1}{(\epsilon')^2} \Var_{\theta}\left[\sum_{i=1}^n \left(\delta_n t_n \dot{\mathbf{I}}(Y_i\;;\theta)\right)^3\right]
= \frac{n \delta_n^6 M^6}{(\epsilon')^2}\Var_{\theta} \left(\dot{\mathbf{I}}^3(\theta)\right)
\rightarrow  \;  0,
\end{align*}
uniformly in $\theta \in K$ and $|t_n| \leq M$. 
Equation \eqref{eqn:DotS_3_3} can be proved similarly, since $E_{\theta} \left(\dddot{\mathbf{I}}(\theta)\right)^2$ is finite by Assumption \ref{model_assumption}.

Next we prove equation \eqref{eqn:DotS_3_2}.
By H\"older's inequality, $$E_{\theta}(\ddot{\mathbf{I}}(\theta) \dot{\mathbf{I}}(\theta))^2\leq  \left(E_{\theta}[\ddot{\mathbf{I}}(\theta)]^{2\times 3/2} \right)^{2/3}\left(E_{\theta}[\dot{\mathbf{I}}(\theta)]^{2\times 3} \right)^{1/3}.$$
By Assumption \ref{model_assumption} we know that $\ddot{\mathbf{I}}(\theta)$ has finite third moment and $\dot{\mathbf{I}}(\theta)$ has finite sixth moment, which implies $$\Var_{\theta} \left(\ddot{\mathbf{I}}(Y_i \;;\theta) \dot{\mathbf{I}}(Y_i\;;\theta)\right)<\infty.$$
By Chebyshev's inequality and independence of data samples, we have that $\epsilon'>0$
\begin{align*}
&P_{\theta}\left[\left|\sum_{i=1}^n \delta_n^3 t_n^3 \ddot{\mathbf{I}}(Y_i \;;\theta) \dot{\mathbf{I}}(Y_i\;;\theta)-\sum_{i=1}^n \delta_n^3 t_n^3 E_{\theta}[\ddot{\mathbf{I}}(Y_i \;;\theta) \dot{\mathbf{I}}(Y_i\;;\theta)]\right|>\epsilon'\right]\\
&\leq  \frac{1}{(\epsilon')^2} \Var_{\theta}\left[\sum_{i=1}^n \delta_n^3 t_n^3 \ddot{\mathbf{I}}(Y_i \;;\theta) \dot{\mathbf{I}}(Y_i\;;\theta)\right]
= \frac{n \delta_n^6  M^6}{(\epsilon')^2}\Var_{\theta} \left(\ddot{\mathbf{I}}(Y_i \;;\theta) \dot{\mathbf{I}}(Y_i\;;\theta)\right)
\rightarrow  \;  0,
\end{align*}
uniformly in $\theta \in K$ and $|t_n| \leq M$.
\end{proof}

\subsection{Proof of Theorem \ref{thm:SLAN}}
\label{sec:Proof_of_Main_Theorem}
By Propositions \ref{prop:T_3to5}, \ref{prop:dotS_fifth}, \ref{prop:Tmulti_dotS}, 
and \ref{prop:Tn1_DotS34},  
equation \eqref{eqn:taylor_usage} can be rewritten as
\begin{equation}
\begin{split}
\label{eqn:scale_likelihood2}
&\hspace*{-0.3cm}\mathbf{l}(\theta+\delta_n t_n)-\mathbf{l}(\theta)\\
=&2\sum_{i=1}^n T_{ni}+\sum_{i=1}^n\delta_n t_n \dot{\mathbf{I}}(Y_i\;;\theta)-\sum_{i=1}^n T_{ni}^2-\sum_{i=1}^n \left(T_{ni}\delta_n t_n \dot{\mathbf{I}}(Y_i\;;\theta)\right)\\
&-\frac{1}{4}\sum_{i=1}^n \left(\delta_n t_n \dot{\mathbf{I}}(Y_i\;;\theta)\right)^2
+\frac{1}{12}\sum_{i=1}^n \left(\delta_n t_n \dot{\mathbf{I}}(Y_i\;;\theta)\right)^3+\frac{1}{2}\sum_{i=1}^n T_{ni} \left(\delta_n t_n \dot{\mathbf{I}}(Y_i\;;\theta)\right)^2\\
&-\frac{1}{32}\sum_{i=1}^n \left(\delta_n t_n \dot{\mathbf{I}}(Y_i\;;\theta)\right)^4+R_n(\theta,t_n).
\end{split}
\end{equation}
Here and in the sequel, $R_n(\theta,t_n)\xrightarrow[]{p} 0$ in $P_{\theta}$ probability,
uniformly in $\theta \in K$ compact $\subset \Theta$ and $|t_n| \leq M$, while the explicit expression of $R_n(\theta,t_n)$ may change line by line.
The proof proceeds by tackling the terms in equation \eqref{eqn:scale_likelihood2} one by one, which all hold uniformly in $\theta \in K$ compact $\subset \Theta$ and $|t_n| \leq M$, as follows:
\begin{enumerate}
\item For the term $2\sum_{i=1}^n T_{ni}$, by the result of Proposition \ref{prop:summation_Tn}, , $$\sum_{i=1}^n \bigg(T_{ni} - \frac{1}{4}\delta_n^2 t_n^2 \ddot{\mathbf{I}}(Y_i \;;\theta)-\frac{1}{12}(\delta_n t_n)^3\dddot{\mathbf{I}}(Y_i \;;\theta)-\frac{1}{48}(\delta_n t_n)^4\ddddot{\mathbf{I}}(Y_i \;;\theta)\bigg) \xrightarrow[]{p} 0.$$
Then by Propositions \ref{prop:DotS_3} and \ref{prop:4th_LLN}, 
$$\sum_{i=1}^n \bigg(T_{ni} - \frac{1}{4}\delta_n^2 t_n^2 \ddot{\mathbf{I}}(Y_i \;;\theta)-\frac{1}{12}(\delta_n t_n)^3E_{\theta}\left[\dddot{\mathbf{I}}(\theta)\right]-\frac{1}{48}(\delta_n t_n)^4E_{\theta}\left[\ddddot{\mathbf{I}}(\theta)\right]\bigg) \xrightarrow[]{p} 0.$$
By the result of Proposition \ref{prop:V_CLT}, we have, 
\begin{align*}
2\sum_{i=1}^n T_{ni}
=&\frac{1}{2}t_n^2 \sqrt{n}\delta_n^2 V_n(\theta)+\frac{1}{2}\delta_n^2t_n^2 n E_{\theta}[\ddot{\mathbf{I}}(\theta)]+\frac{1}{6}(\delta_n t_n)^3nE_{\theta}\left[\dddot{\mathbf{I}}(\theta)\right]\\
&+\frac{1}{24}(\delta_n t_n)^4nE_{\theta}\left[\ddddot{\mathbf{I}}(\theta)\right]+R_n(\theta,t_n),
\end{align*}
where $V_n(\theta)$ is defined in equation \eqref{eqn:V_n_theta} and  distributed as 
$$
\mathbf{L}_{\theta}\left(V_n(\theta)\right)\rightarrow N \left(0,\Var_{\theta}(\ddot{\mathbf{I}}(\theta) )\right).$$

\item For the term $\sum_{i=1}^n\delta_n t_n \dot{\mathbf{I}}(Y_i\;;\theta)$, 
 we have
\begin{align*}
	\sum_{i=1}^n\delta_n t_n \dot{\mathbf{I}}(Y_i\;;\theta)=& t_n \sqrt{n}\delta_n S_n(\theta),
\end{align*}
where $S_n(\theta)$ is defined in equation \eqref{eqn:S_n_theta} and distributed as
$$\mathbf{L}_{\theta}\left(S_n(\theta)\right)\rightarrow N(0,{\mathcal{I}}(\theta)),$$
in the weak topology, by Proposition $2.2$ of \cite{bickel1993efficient}.\\

\item For the term $-\sum_{i=1}^n T_{ni}^2$, by Proposition \ref{prop:sum_T_secondmoment}, we have
\begin{align*}
-\sum_{i=1}^n T_{ni}^2=& -\sum_{i=1}^n \left(T_{ni}^2-\frac{1}{16}\delta_n^4t_n^4 E_{\theta}[\ddot{\mathbf{I}}(\theta)]^2\right)-\sum_{i=1}^n \frac{1}{16}\delta_n^4t_n^4 E_{\theta}[\ddot{\mathbf{I}}(\theta)]^2\\
=&-\frac{1}{16}n \delta_n^4 t_n^4 E_{\theta}[\ddot{\mathbf{I}}(\theta)]^2+R_n(\theta,t_n).
\end{align*}

\item For the term $-\sum_{i=1}^n \left(T_{ni}\delta_n t_n \dot{\mathbf{I}}(Y_i\;;\theta)\right)$, we have
\begin{align*}
&-\sum_{i=1}^n \left(T_{ni}\delta_n t_n \dot{\mathbf{I}}(Y_i\;;\theta)\right)\\&=	-\sum_{i=1}^n \left(T_{ni}- \frac{1}{4}\delta_n^2 t_n^2 \ddot{\mathbf{I}}(Y_i \;;\theta)- \frac{1}{12}\delta_n^3 t_n^3 \dddot{\mathbf{I}}(Y_i \;;\theta)\right) \left(\delta_n t_n \dot{\mathbf{I}}(Y_i\;;\theta)\right)\\
&\quad-\frac{1}{4}\sum_{i=1}^n \delta_n^3 t_n^3 \ddot{\mathbf{I}}(Y_i \;;\theta) \dot{\mathbf{I}}(Y_i\;;\theta)- \frac{1}{12}\sum_{i=1}^n\delta_n^4 t_n^4 \dddot{\mathbf{I}}(Y_i \;;\theta)\dot{\mathbf{I}}(Y_i\;;\theta).
\end{align*}
By Propositions \ref{prop:4th_LLN}, \ref{prop:Tn1_DotS1} and \ref{prop:DotS_3}, we have
\begin{align*}
&-\sum_{i=1}^n \left(T_{ni}\delta_n t_n \dot{\mathbf{I}}(Y_i\;;\theta)\right)\\
&=-\frac{1}{4}n\delta_n^3 t_n^3 E_{\theta}[\ddot{\mathbf{I}}(\theta) \dot{\mathbf{I}}(\theta)]-\frac{1}{12}n\delta_n^4 t_n^4 E_{\theta}[\dddot{\mathbf{I}}(\theta)\dot{\mathbf{I}}(\theta)]+R_n(\theta,t_n).
\end{align*}

\item For the term $-\frac{1}{4}\sum_{i=1}^n \left(\delta_n t_n \dot{\mathbf{I}}(Y_i\;;\theta)\right)^2$, 
 we have
\begin{align*}
-\frac{1}{4}\sum_{i=1}^n \left(\delta_n t_n \dot{\mathbf{I}}(Y_i\;;\theta)\right)^2=&-\frac{1}{4}\delta_n^2 t_n^2\sum_{i=1}^n \left[\dot{\mathbf{I}}^2(Y_i\;;\theta) -{\mathcal{I}}(\theta)\right]-\frac{1}{4}\delta_n^2 t_n^2 n {\mathcal{I}}(\theta)\\
=&-\frac{1}{4}t_n^2 \delta_n^2 \sqrt{n} U_n(\theta)-\frac{1}{4}\delta_n^2 t_n^2 n {\mathcal{I}}(\theta),
\end{align*}
where 
$U_n(\theta)$ is defined in equation \eqref{eqn:U_n_theta} and is distributed (Proposition \ref{prop:U_CLT}) as 
$$
\mathbf{L}_{\theta}\left(U_n(\theta)\right)\rightarrow N \left(0,\Var_{\theta}[\dot{\mathbf{I}}^2(\theta)]\right).$$

\item For the term $\frac{1}{12}\sum_{i=1}^n \left(\delta_n t_n \dot{\mathbf{I}}(Y_i\;;\theta)\right)^3$, by Proposition \ref{prop:DotS_3}, we have
\begin{align*}
\frac{1}{12}\sum_{i=1}^n \left(\delta_n t_n \dot{\mathbf{I}}(Y_i\;;\theta)\right)^3=&
\frac{1}{12}\sum_{i=1}^n \delta_n^3 t_n^3\left(\dot{\mathbf{I}}^3(Y_i\;;\theta)-E_{\theta}[\dot{\mathbf{I}}^3(Y_i\;;\theta)]\right)\\
&+\frac{1}{12}n \delta_n^3 t_n^3E_{\theta}[\dot{\mathbf{I}}^3(\theta)]\\
=&\frac{1}{12}n \delta_n^3 t_n^3E_{\theta}[\dot{\mathbf{I}}^3(\theta)]+R_n(\theta,t_n).
\end{align*}

\item For the term $\frac{1}{2}\sum_{i=1}^n T_{ni} \left(\delta_n t_n \dot{\mathbf{I}}(Y_i\;;\theta)\right)^2$, by Proposition \ref{prop:Tn1_DotS2}, we have
\begin{align*}
\frac{1}{2}\sum_{i=1}^n T_{ni} \left(\delta_n t_n \dot{\mathbf{I}}(Y_i\;;\theta)\right)^2=&\frac{1}{2}\sum_{i=1}^n \left(T_{ni}- \frac{1}{4}\delta_n^2 t_n^2 \ddot{\mathbf{I}}(Y_i \;;\theta)\right) \left(\delta_n t_n \dot{\mathbf{I}}(Y_i\;;\theta)\right)^2 \\
&+\frac{1}{8}\sum_{i=1}^n\delta_n^3 t_n^3 \ddot{\mathbf{I}}(Y_i \;;\theta)\dot{\mathbf{I}}(Y_i\;;\theta)\\
=&\frac{1}{8}n\delta_n^3 t_n^3 E_{\theta}[\ddot{\mathbf{I}}(\theta)\dot{\mathbf{I}}(\theta)]+R_n(\theta,t_n).
\end{align*}

\item For the term $-\frac{1}{32}\sum_{i=1}^n \left(\delta_n t_n \dot{\mathbf{I}}(Y_i\;;\theta)\right)^4$, by Proposition \ref{prop:4th_LLN}, we have
\begin{align*}
& -\frac{1}{32}\sum_{i=1}^n \left(\delta_n t_n \dot{\mathbf{I}}(Y_i\;;\theta)\right)^4\\
&=-\frac{1}{32}\sum_{i=1}^n \left(\delta_n t_n \dot{\mathbf{I}}(Y_i\;;\theta)\right)^4+\frac{1}{32}t_n^4  n \delta_n^4 E_{\theta}[\dot{\mathbf{I}}^4(\theta)]-\frac{1}{32}t_n^4  n \delta_n^4 E_{\theta}[\dot{\mathbf{I}}^4(\theta)]\\
&=-\frac{1}{32}t_n^4 n \delta_n^4 E_{\theta}[\dot{\mathbf{I}}^4(\theta)]+R_n(\theta,t_n).
\end{align*}
\end{enumerate}

Now, we can rewrite equation \eqref{eqn:scale_likelihood2} as
\begin{align*}
&\hspace*{-0.3cm} \mathbf{l}(\theta+\delta_n t_n)-\mathbf{l}(\theta)\\
=&\frac{1}{2}t_n^2 \sqrt{n}\delta_n^2 V_n(\theta)+\frac{1}{2}\delta_n^2t_n^2 n E_{\theta}[\ddot{\mathbf{I}}(\theta)]+\frac{1}{6}(\delta_n t_n)^3nE_{\theta}\left[\dddot{\mathbf{I}}(\theta)\right]+\frac{1}{24}(\delta_n t_n)^4nE_{\theta}\left[\ddddot{\mathbf{I}}(\theta)\right]\\
&+ t_n \sqrt{n}\delta_n S_n(\theta)-\frac{1}{16}n \delta_n^4 t_n^4 E_{\theta}[\ddot{\mathbf{I}}(\theta)]^2-\frac{1}{4}n\delta_n^3 t_n^3 E_{\theta}[\ddot{\mathbf{I}}(\theta) \dot{\mathbf{I}}(\theta)]-\frac{1}{12}n\delta_n^4 t_n^4 E_{\theta}[\dddot{\mathbf{I}}(\theta)\dot{\mathbf{I}}(\theta)]
\\
&-\frac{1}{4}t_n^2 \delta_n^2 \sqrt{n} U_n(\theta)-\frac{1}{4}\delta_n^2 t_n^2 n {\mathcal{I}}(\theta)+\frac{1}{12}n \delta_n^3 t_n^3E_{\theta}[\dot{\mathbf{I}}^3(\theta)]+\frac{1}{8}n\delta_n^3 t_n^3 E_{\theta}[\ddot{\mathbf{I}}(\theta)\dot{\mathbf{I}}(\theta)]
\\
&-\frac{1}{32}t_n^4 n \delta_n^4 E_{\theta}[\dot{\mathbf{I}}^4(\theta)]+R_n(\theta,t_n).
\end{align*}
Reorganizing the terms, we have
\begin{align*}
	&\hspace*{-0.3cm} \mathbf{l}(\theta+\delta_n t_n)-\mathbf{l}(\theta)\\
	=&t_n \left\{\sqrt{n}\delta_nS_n(\theta) \right\}+t_n^2 \left\{\sqrt{n}\delta_n^2\left[\frac{1}{2}V_n(\theta)-\frac{1}{4}U_n(\theta)\right]+ n\delta_n^2 \left[\frac{1}{2}E_{\theta}[\ddot{\mathbf{I}}(\theta)]-\frac{1}{4}{\mathcal{I}}(\theta)\right]\right\}\\
	&+t_n^3 \left\{n\delta_n^3 \left[\frac{1}{12}E_{\theta}[\dot{\mathbf{I}}^3(\theta)]-\frac{1}{8}E_{\theta}[\ddot{\mathbf{I}}(\theta) \dot{\mathbf{I}}(\theta)]+\frac{1}{6}E_{\theta}\left[\dddot{\mathbf{I}}(\theta)\right]\right]\right\}\\
	&+t_n^4 \left\{ n \delta_n^4 \left[-\frac{1}{32}E_{\theta}[\dot{\mathbf{I}}^4(\theta)]-\frac{1}{16}E_{\theta}[\ddot{\mathbf{I}}(\theta)]^2-\frac{1}{12} E_{\theta}[\dddot{\mathbf{I}}(\theta)\dot{\mathbf{I}}(\theta)]+\frac{1}{24}E_{\theta}\left[\ddddot{\mathbf{I}}(\theta)\right]\right]\right\}\\
	&+R_n(\theta,t_n).
\end{align*}
 
We complete the proof by noting that differentiating $\int s^2(\theta) d\mu=1$ with respect to $\theta$ yields $\int \dot{s}(\theta)s(\theta) d\mu=0$, and further differentiating with respect to $\theta$ yields 
\begin{equation*}
\int \ddot{s}(\theta)s(\theta) d\mu+\int \dot{s}^2(\theta) d\mu=0,
\end{equation*}
which gives
\begin{equation*}
2E_{\theta}[\ddot{\mathbf{I}}(\theta)]=4\int \ddot{s}(\theta) s(\theta) d\mu=-4\int \dot{s}^2(\theta) d\mu=-{\mathcal{I}}(\theta).
\end{equation*}

\section{Properties of the MCLE methodology}
\label{sec:mc}

In this section, we firstly elaborate how $\{\overline{\beta}_{\iota}\}_{\iota \in \{1,2,3\}}$ in equation \eqref{meta1} may be obtained in practice. 
Recalling that $\theta^\ast_{j,n}-\theta^{\ast}_n=jn^{-1/4}$ for $j$ in a finite set $\mathcal{J}$. We take the classical setting that $\mathcal{J}=\{-J,-J+1,\cdots,0,\cdots,J-1,J\}$ for $J$ being an integer greater than $1$, where we exclude $J=1$ since we need at least $4$ values to interpolate a cubic polynomial curve. Write 
\begin{align*}
\overline{\mathbf{Y}}=\left( \begin{array}{c} \overline{\mathbf{l}}(\theta^\ast_{-J,n}) \\ \vdots\\ \overline{\mathbf{l}}(\theta^\ast_{n}) \\ \vdots\\\overline{\mathbf{l}}(\theta^\ast_{J,n}) \end{array} \right), \quad \quad
\overline{\mathbf{X}}=\left( \begin{array}{cccc} 1 & (-Jn^{-1/4}) &(-Jn^{-1/4})^2&(-Jn^{-1/4})^3 \\
\vdots & \vdots & \vdots& \vdots\\
1 & 0 & 0 & 0\\
\vdots & \vdots & \vdots & \vdots\\
1 & (Jn^{-1/4}) &(Jn^{-1/4})^2 &(Jn^{-1/4})^3 \end{array} \right),\end{align*}
$\overline{\beta}=(\overline{\beta}_0,\overline{\beta}_1,\overline{\beta}_2,\overline{\beta}_3)^T$, and $\overline{\epsilon}=(\overline{\epsilon}_{-J,n}, \cdots, \overline{\epsilon}_0 , \cdots,\overline{\epsilon}_{J,n})^T$, where the superscript ``$T$" stands for the transpose operation.
Now we fit ``data'' $\overline{\mathbf{Y}}$ to $\overline{\mathbf{X}}$ by linear regression 
$$\overline{\mathbf{Y}}=\overline{\mathbf{X}}\overline{\beta}+\overline{\epsilon}.$$
By least-squares estimation, we obtain the estimation of regression coefficients as
\begin{equation}
\label{eqn:beta_LSE}
\widehat{\beta}=(\widehat{\beta}_0,\widehat{\beta}_1,\widehat{\beta}_2,\widehat{\beta}_3)^T=\left(\overline{\mathbf{X}}^T\overline{\mathbf{X}}\right)^{-1}\overline{\mathbf{X}}^T\overline{\mathbf{Y}},
\end{equation}
where
\begin{align*}
&\overline{\mathbf{X}}^T\overline{\mathbf{X}}\\
&=\left( \begin{array}{cccc} J & \sum_{j=-J}^J (jn^{-1/4}) &\sum_{j=-J}^J (jn^{-1/4})^2 &\sum_{j=-J}^J (jn^{-1/4})^3 \\
\sum_{j=-J}^J (jn^{-1/4}) & \sum_{j=-J}^J (jn^{-1/4})^2 &\sum_{j=-J}^J (jn^{-1/4})^3 &\sum_{j=-J}^J (jn^{-1/4})^4\\
\sum_{j=-J}^J (jn^{-1/4})^2 & \sum_{j=-J}^J (jn^{-1/4})^3 &\sum_{j=-J}^J (jn^{-1/4})^4 &\sum_{j=-J}^J (jn^{-1/4})^5\\
\sum_{j=-J}^J (jn^{-1/4})^3 & \sum_{j=-J}^J (jn^{-1/4})^4 &\sum_{j=-J}^J (jn^{-1/4})^5 &\sum_{j=-J}^J (jn^{-1/4})^6 \end{array} \right).
\end{align*}

Before we investigate the order of $\widehat{\beta}$, let us firstly explore
 the orders of the determinant and adjugate matrix of $\overline{\mathbf{X}}^T\overline{\mathbf{X}}$ in the following two lemmas.
\begin{lemma}
\label{lemma:order_determinate}
For $J$ being a fixed integer greater than $1$, the determinant of $\overline{\mathbf{X}}^T\overline{\mathbf{X}}$, denoted as $\operatorname{det}\left(\overline{\mathbf{X}}^T\overline{\mathbf{X}}\right)$, is of order $\mathcal{O}(n^{-3})$.
\end{lemma}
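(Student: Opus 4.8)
The plan is to exploit the Hankel (moment-matrix) structure of $\overline{\mathbf{X}}^T\overline{\mathbf{X}}$ after pulling out the scale $n^{-1/4}$. Writing $h=n^{-1/4}$ and indexing rows and columns by $a,b\in\{0,1,2,3\}$, the $(a,b)$ entry of $\overline{\mathbf{X}}^T\overline{\mathbf{X}}$ is $\sum_{j=-J}^{J}(jh)^{a+b}=h^{a+b}m_{a+b}$, where $m_k:=\sum_{j=-J}^{J}j^k$ depends only on $J$. The first step is to record that this identity means $\overline{\mathbf{X}}^T\overline{\mathbf{X}}=DVD$, where $D=\operatorname{diag}(1,h,h^2,h^3)$ and $V$ is the $n$-independent Hankel matrix with entries $V_{ab}=m_{a+b}$.

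Second, by multiplicativity of the determinant, $\det(\overline{\mathbf{X}}^T\overline{\mathbf{X}})=\det(D)^2\det(V)=h^{2(0+1+2+3)}\det(V)=h^{12}\det(V)=n^{-3}\det(V)$. Since $V$ does not depend on $n$, it then remains only to verify that $\det(V)$ is a finite, nonzero constant, after which the claimed order $\mathcal{O}(n^{-3})$ follows at once (indeed $\det(\overline{\mathbf{X}}^T\overline{\mathbf{X}})$ is exactly $n^{-3}\det(V)$).

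Third, to establish $\det(V)\neq0$ I would use the Gram representation together with the Cauchy–Binet formula. Let $W$ be the $(2J+1)\times4$ matrix whose row indexed by $j\in\{-J,\dots,J\}$ is $(1,j,j^2,j^3)$; then $V=W^\top W$, and Cauchy–Binet gives $\det(V)=\sum_{S}\det(W_S)^2$, where $S$ ranges over the $4$-element subsets of $\{-J,\dots,J\}$ and $W_S$ is the corresponding $4\times4$ submatrix. Each $W_S$ is Vandermonde, so $\det(W_S)=\prod_{j<j',\,j,j'\in S}(j'-j)$ is nonzero because the nodes are distinct. Hence $\det(V)$ is a sum of squares with at least one strictly positive term whenever there are at least four distinct nodes, i.e.\ whenever $2J+1\geq4$; the hypothesis $J>1$ gives $2J+1\geq5>4$, so $\det(V)>0$.

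The argument is essentially a careful bookkeeping of the powers of $h$, so there is no serious analytic obstacle; the only point requiring attention is the nondegeneracy of $V$, which is precisely where the assumption $J>1$ is used. For $J=1$ there are only three nodes, the cubic design matrix is rank-deficient, and $\det(V)=0$, which is exactly why the paper stipulates that $J$ be an integer greater than $1$.
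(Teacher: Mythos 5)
Your proof is correct, and it takes a genuinely different route from the paper's. The paper proves the lemma by brute force: it expands $\det\left(\overline{\mathbf{X}}^T\overline{\mathbf{X}}\right)$ by cofactors along the first row, writes out all four $3\times 3$ minors explicitly in terms of the power sums $\sum_{j=-J}^J (jn^{-1/4})^k$, and then tracks the powers of $n^{-1/4}$ term by term (each product of entries accumulates total exponent $12$, giving $n^{-3}$). Your factorization $\overline{\mathbf{X}}^T\overline{\mathbf{X}}=DVD$ with $D=\operatorname{diag}(1,h,h^2,h^3)$ collapses all of that bookkeeping into the single identity $\det\left(\overline{\mathbf{X}}^T\overline{\mathbf{X}}\right)=h^{12}\det(V)=n^{-3}\det(V)$, and is clearly the more transparent argument. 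It also buys something the paper's proof does not explicitly deliver: via the Gram representation $V=W^\top W$ and Cauchy--Binet with Vandermonde minors, you show $\det(V)>0$, i.e.\ that the determinant is \emph{exactly} of order $n^{-3}$ rather than merely $\mathcal{O}(n^{-3})$. That nondegeneracy is in fact needed downstream, since Theorem \ref{thm:MCestimator} divides by this determinant to get the orders of $\left(\overline{\mathbf{X}}^T\overline{\mathbf{X}}\right)^{-1}_{22}$ and $\left(\overline{\mathbf{X}}^T\overline{\mathbf{X}}\right)^{-1}_{33}$, so your version closes a small gap the paper leaves implicit. One incidental benefit of your computation: the $(1,1)$ entry of $\overline{\mathbf{X}}^T\overline{\mathbf{X}}$ is $m_0=2J+1$, which corrects what appears to be a typo ($J$, or $K$ in the appendix) in the paper's displayed matrix; this does not affect either argument's conclusion.
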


\begin{proof} 
See Appendix \ref{app:order_determinate}.
\end{proof}

\begin{lemma}
\label{lemma:order_adjugate}
For $J$ being a fixed integer greater than $1$, the adjugate of $\overline{\mathbf{X}}^T\overline{\mathbf{X}}$, denoted as $\operatorname{adj}\left(\overline{\mathbf{X}}^T\overline{\mathbf{X}}\right)$, has that
 $$\operatorname{adj}\left(\overline{\mathbf{X}}^T\overline{\mathbf{X}}\right)_{22}= \mathcal{O}(n^{-5/2})\quad\text{and}\quad \operatorname{adj}\left(\overline{\mathbf{X}}^T\overline{\mathbf{X}}\right)_{33}= \mathcal{O}(n^{-2}).$$
\end{lemma}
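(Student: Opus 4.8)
The plan is to read off both orders directly from the sparse Hankel structure of $\overline{\mathbf{X}}^T\overline{\mathbf{X}}$, exactly in the spirit of the proof of Lemma~\ref{lemma:order_determinate}. Set $a=n^{-1/4}$ and $m_k=\sum_{j=-J}^{J}j^{k}$, so that the $(r,c)$ entry of $M:=\overline{\mathbf{X}}^T\overline{\mathbf{X}}$ (with $r,c\in\{1,2,3,4\}$ indexing the powers $0,1,2,3$) equals $a^{\,r+c-2}m_{r+c-2}$. Because the grid $\{-J,\dots,J\}$ is symmetric about the origin, all odd power sums vanish, $m_1=m_3=m_5=0$, leaving a checkerboard of zeros in which only entries with $r+c$ even survive. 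I would record this structure at the outset, since it is what makes the subsequent determinants factor.

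The second step uses the fact that, for a symmetric matrix, the diagonal entries of the adjugate coincide with the principal cofactors, $\operatorname{adj}(M)_{kk}=\det M_{\widehat{k}\widehat{k}}$, where $M_{\widehat{k}\widehat{k}}$ deletes row $k$ and column $k$. For $k=2$ the surviving rows and columns are $\{1,3,4\}$ (powers $0,2,3$), and the vanishing of $m_3,m_5$ decouples the power-$3$ coordinate, giving
\begin{equation*}
M_{\widehat{2}\widehat{2}}=\begin{pmatrix} m_0 & a^2 m_2 & 0\\ a^2 m_2 & a^4 m_4 & 0\\ 0 & 0 & a^6 m_6\end{pmatrix},\qquad \det M_{\widehat{2}\widehat{2}}=a^{10}\,m_6\,(m_0 m_4-m_2^2).
\end{equation*}
Since $a^{10}=n^{-5/2}$, this is $\mathcal{O}(n^{-5/2})$. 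For $k=3$ the surviving rows and columns are $\{1,2,4\}$ (powers $0,1,3$), the odd moments now decoupling the power-$0$ coordinate, so that
\begin{equation*}
M_{\widehat{3}\widehat{3}}=\begin{pmatrix} m_0 & 0 & 0\\ 0 & a^2 m_2 & a^4 m_4\\ 0 & a^4 m_4 & a^6 m_6\end{pmatrix},\qquad \det M_{\widehat{3}\widehat{3}}=m_0\,a^{8}\,(m_2 m_6-m_4^2),
\end{equation*}
and $a^{8}=n^{-2}$ yields $\mathcal{O}(n^{-2})$.

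There is no genuine obstacle here, only bookkeeping; the one point worth confirming is that the stated orders are sharp, i.e.\ that the scalar prefactors do not vanish. The numbers $m_0=2J+1$ and $m_6$ are manifestly positive, and $m_0 m_4-m_2^2>0$ for every $J\ge 1$ by the Cauchy--Schwarz inequality applied to $1$ and $j^{2}$ over the grid (never proportional, because of the $j=0$ coordinate). The remaining factor satisfies $m_2 m_6-m_4^2=(\sum_j j^2)(\sum_j j^6)-(\sum_j j^4)^2\ge 0$ by Cauchy--Schwarz applied to $j$ and $j^{3}$, with equality precisely when $j^2$ is constant over the nonzero grid points; that degenerate case is exactly $J=1$, and it is excluded by the hypothesis $J>1$, which is the only place that hypothesis enters. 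This is the same nondegeneracy underlying the leading constant of $\det M$ in Lemma~\ref{lemma:order_determinate}, and it confirms $\operatorname{adj}(M)_{22}=\mathcal{O}(n^{-5/2})$ and $\operatorname{adj}(M)_{33}=\mathcal{O}(n^{-2})$.
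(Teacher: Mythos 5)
Your proof is correct and follows essentially the same route as the paper: both identify $\operatorname{adj}(M)_{kk}$ with the determinant of the $3\times 3$ principal submatrix obtained by deleting row and column $k$, and read off the power of $n^{-1/4}$. Your version is cleaner in two respects. First, by noting that the odd power sums $m_1=m_3=m_5=0$ vanish on the symmetric grid, you reduce each minor to a block-diagonal $2\times2$ plus $1\times1$ form, whereas the paper carries out the full cofactor expansion; second, you verify via Cauchy--Schwarz that the scalar prefactors $m_0m_4-m_2^2$ and $m_2m_6-m_4^2$ are strictly positive, pinpointing that $J>1$ is needed exactly to keep $m_2m_6-m_4^2$ from degenerating --- a sharpness check the paper omits. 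It is also worth noting that the paper's Appendix G contains a duplication error: the second display repeats the $\operatorname{adj}_{22}$ submatrix (rows and columns $\{1,3,4\}$) verbatim while asserting the $\mathcal{O}(n^{-2})$ conclusion, whereas the $\operatorname{adj}_{33}$ computation should use the $\{1,2,4\}$ submatrix as you correctly do; your write-up therefore supplies the computation the paper intended but did not actually display.
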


\begin{proof}
See Appendix \ref{app:order_adjugate}.
\end{proof}

Recall that in equation \eqref{meta1}, we have $\overline{\beta}_2=\mathcal{O}(n)$ and $\overline{\beta}_3=\mathcal{O}(n)$, and then we can see that 
the contribution from the term $\overline{\beta}_3 (j n^{-1/4})^3$ in finding the desired maximizer in the interval $[-Jn^{-1/4},Jn^{-1/4}]$ is asymptotically negligible on the $\mathcal{O}(n^{-1/2})$ scale.
The MCLE is therefore close to the maximizer of the quadratic approximation. Given that the coefficient of the quadratic term is negative by the RLAN property in equation \eqref{eq:rlan}, we have
\begin{equation}
\label{eqn:MCLEestimator_formula}
\widehat\theta^{\text{MCLE}}_n=\theta_n^{\ast}+\frac{\widehat{\beta}_1}{-2\widehat{\beta}_2} + o(n^{-1/2}),
\end{equation}
where $\widehat{\beta}_1$ and $\widehat{\beta}_2$ are given in equation \eqref{eqn:beta_LSE}.
In the following theorem, we compare the performance of $\widehat\theta^{\text{MCLE}}_n$ with the generalized estimator $\widehat{\theta}^{\oneStepA}_n$, which is defined on the $\delta_n=n^{-1/4}$ scale as follows:
\begin{align}
\label{eqn:rescale_onestep}
\widehat{\theta}^{\oneStepA}_n=&\theta_n^{\ast}+\delta_n \times \frac{\sqrt{n}\delta_nS_n(\theta_n^{\ast})}{n\delta_n^2 \mathcal{I}(\theta_n^{\ast})}.
\end{align}
Note that, $\widehat{\theta}^{\oneStepA}_n$ is the generalization of $\widehat\theta^{\oneStepC}_n$ (Section \ref{sec:Background_and_motivation}), for the reason that equation \eqref{eqn:rescale_onestep} with $\delta_n=n^{-1/2}$ instead of $\delta_n=n^{-1/4}$,
gives $\widehat\theta^{\oneStepC}_n$.


\begin{thm}
	\label{thm:MCestimator}
	Suppose that the data are modeled as an {\iid} sequence $Y_1,\dots,Y_n$ drawn from a regular parametric model $\mathbf{P}=\{P_{\theta}: \theta\in\Theta \}$ satisfying Assumption \ref{model_assumption}. 
Take $m=m(n)$ Monte Carlo simulations per observation where $\mathcal{O}(\sqrt{n})\ll m\ll \mathcal{O}(n)$, and take $\mathcal{J}=\{-J,-J+1,\cdots,0,\cdots,J-1,J\}$ for $J$ being a fixed integer greater than $1$.
Then the maximum cubic log-likelihood estimator $\widehat\theta^{\text{MCLE}}_n$ is efficient.
\end{thm}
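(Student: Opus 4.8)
The plan is to establish that $\widehat\theta^{\text{MCLE}}_n$ is asymptotically equivalent to the rescaled one-step estimator $\widehat{\theta}^{\oneStepA}_n$ of equation \eqref{eqn:rescale_onestep}, which is efficient by the classical one-step theory under the LAN property contained in Theorem \ref{thm:SLAN}. Simplifying \eqref{eqn:rescale_onestep} shows that the scale cancels, $\widehat{\theta}^{\oneStepA}_n=\theta_n^{\ast}+S_n(\theta_n^\ast)/\big(\sqrt{n}\,\mathcal{I}(\theta_n^\ast)\big)$, so it achieves $\sqrt n\big(\widehat{\theta}^{\oneStepA}_n-\theta\big)\to N\big(0,\mathcal{I}(\theta)^{-1}\big)$. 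By equation \eqref{eqn:MCLEestimator_formula} we have $\widehat\theta^{\text{MCLE}}_n=\theta_n^{\ast}+\widehat\beta_1/(-2\widehat\beta_2)+o(n^{-1/2})$, so it suffices to prove that $\widehat\beta_1/(-2\widehat\beta_2)=S_n(\theta_n^\ast)/\big(\sqrt{n}\,\mathcal{I}(\theta_n^\ast)\big)+o_p(n^{-1/2})$. The uniform-in-$\theta$ conclusions of Theorem \ref{thm:SLAN} let me treat the discretized, uniformly $\sqrt n$-consistent $\theta_n^\ast$ as fixed on the grid, via the standard Le Cam discretization argument.

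Next I would exploit the exact linearity of the Monte Carlo meta model. Since equation \eqref{meta1} expresses $\overline{\mathbf{Y}}=\overline{\mathbf{X}}\,\overline{\beta}+\overline{\epsilon}$ identically, the least-squares estimator \eqref{eqn:beta_LSE} splits cleanly as $\widehat\beta=\overline{\beta}+(\overline{\mathbf{X}}^T\overline{\mathbf{X}})^{-1}\overline{\mathbf{X}}^T\overline{\epsilon}$. Matching \eqref{meta1} against the RLAN expansion \eqref{eq:rlan:revisited} with $\delta_n=n^{-1/4}$ identifies the signal coefficients $\overline{\beta}_1=\sqrt{n}\,S_n(\theta_n^\ast)+o(\sqrt n)$ and $\overline{\beta}_2=-\tfrac12 n\,\mathcal{I}(\theta_n^\ast)+o(n)$; here the Monte Carlo bias contributes only the linear term $C_\gamma (n/m)$ to $\overline{\beta}_1$ through \eqref{meta3}, which is $o(\sqrt n)$ precisely because $m\gg\sqrt n$ forces $n/m\ll\sqrt n$, and it leaves $\overline{\beta}_2$ unchanged at leading order. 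It then remains to bound the Monte Carlo perturbation $(\overline{\mathbf{X}}^T\overline{\mathbf{X}})^{-1}\overline{\mathbf{X}}^T\overline{\epsilon}$. Using the odd/even block structure of $\overline{\mathbf{X}}^T\overline{\mathbf{X}}$ together with Lemma \ref{lemma:order_determinate} and Lemma \ref{lemma:order_adjugate}, the relevant diagonal inverse entries are $[(\overline{\mathbf{X}}^T\overline{\mathbf{X}})^{-1}]_{\beta_1\beta_1}=\mathcal{O}(n^{1/2})$ and $[(\overline{\mathbf{X}}^T\overline{\mathbf{X}})^{-1}]_{\beta_2\beta_2}=\mathcal{O}(n)$; combining these with the per-grid-point Monte Carlo variance of order $n/m$ (as displayed in Figure~\ref{f:illustration}) yields $\widehat\beta_1-\overline{\beta}_1=\mathcal{O}_p\big(n^{3/4}m^{-1/2}\big)$ and $\widehat\beta_2-\overline{\beta}_2=\mathcal{O}_p\big(n\,m^{-1/2}\big)$.

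Finally I would assemble the ratio. The denominator satisfies $-2\widehat\beta_2=n\,\mathcal{I}(\theta_n^\ast)\big(1+o_p(1)\big)$, since $(\widehat\beta_2-\overline\beta_2)/(n\mathcal I)=\mathcal{O}_p(m^{-1/2})\to0$. For the numerator, $\overline{\beta}_1/(n\mathcal I)=S_n(\theta_n^\ast)/\big(\sqrt n\,\mathcal{I}(\theta_n^\ast)\big)+o(n^{-1/2})$ recovers the efficient increment, while the Monte Carlo part contributes $(\widehat\beta_1-\overline\beta_1)/(n\mathcal I)=\mathcal{O}_p\big(n^{-1/4}m^{-1/2}\big)$, which is $o_p(n^{-1/2})$ exactly when $n^{1/4}m^{-1/2}\to0$, i.e.\ when $m\gg\sqrt n$. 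Hence $\widehat\beta_1/(-2\widehat\beta_2)=S_n(\theta_n^\ast)/\big(\sqrt n\,\mathcal{I}(\theta_n^\ast)\big)+o_p(n^{-1/2})$, giving $\widehat\theta^{\text{MCLE}}_n-\widehat{\theta}^{\oneStepA}_n=o_p(n^{-1/2})$ and thus $\sqrt n\big(\widehat\theta^{\text{MCLE}}_n-\theta\big)\to N\big(0,\mathcal{I}(\theta)^{-1}\big)$, the efficiency claim; the upper bound $m\ll n$ keeps us in the scalable regime where the bias expansion \eqref{meta3} and the stated Monte Carlo noise scaling apply. I expect the main obstacle to be this error-propagation step: controlling how the Monte Carlo noise $\overline{\epsilon}$ is amplified by the nearly singular matrix $\overline{\mathbf{X}}^T\overline{\mathbf{X}}$, whose determinant is only $\mathcal{O}(n^{-3})$, and verifying that the amplification of the linear coefficient $\widehat\beta_1$ is just barely controlled at the threshold $m\gg\sqrt n$. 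This is exactly where the sharp orders furnished by Lemma \ref{lemma:order_determinate} and Lemma \ref{lemma:order_adjugate} do the essential work.
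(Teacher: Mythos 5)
Your proposal is correct and follows essentially the same route as the paper's proof: both rest on the least-squares representation \eqref{eqn:beta_LSE}, the orders $\big(\overline{\mathbf{X}}^T\overline{\mathbf{X}}\big)^{-1}_{22}=\mathcal{O}(n^{1/2})$ and $\big(\overline{\mathbf{X}}^T\overline{\mathbf{X}}\big)^{-1}_{33}=\mathcal{O}(n)$ from Lemmas \ref{lemma:order_determinate} and \ref{lemma:order_adjugate}, the per-grid-point Monte Carlo variance $n/m$, the delta-method control of $\widehat{\beta}_1/(-2\widehat{\beta}_2)$, and the same use of $m\gg\sqrt{n}$ to kill both the Monte Carlo variance relative to $\operatorname{SE_{stat}^2}=\mathcal{O}(1/n)$ and the $\mathcal{O}(1/m)$ bias from \eqref{meta3}. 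Your explicit decomposition $\widehat\beta=\overline\beta+(\overline{\mathbf{X}}^T\overline{\mathbf{X}})^{-1}\overline{\mathbf{X}}^T\overline\epsilon$ and the direct comparison with $\widehat{\theta}^{\oneStepA}_n$ is only a reorganization of the paper's $\operatorname{SE_{MC}^2}/\operatorname{SE_{stat}^2}\to 0$ argument, not a different method.
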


\begin{proof}
By the classical results of linear regression (see, e.g., equation $(2.13)$ on page $12$ of \cite{rao2008linear}), we know that $\mathbb{E}(\widehat{\beta}_1)=\overline{\beta}_1$ and $\mathbb{E}(\widehat{\beta}_2)=\overline{\beta}_2$. 
By the RLAN property (Definition \ref{def:RLAN}) we can see that the coefficients $\overline{\beta}_1$ and $\overline{\beta}_2$ are of order $\sqrt{n}$ and $n$ respectively (see Section \ref{sec:Our_contributions} ($2$) for illustration). 
Recall that by least-squares estimation we have 
$$
\widehat{\beta}=(\widehat{\beta}_0,\widehat{\beta}_1,\widehat{\beta}_2,\widehat{\beta}_3)^T=\left(\overline{\mathbf{X}}^T\overline{\mathbf{X}}\right)^{-1}\overline{\mathbf{X}}^T\overline{\mathbf{Y}}.
$$
By Lemma \ref{lemma:order_determinate} we have $\operatorname{det}\left(\overline{\mathbf{X}}^T\overline{\mathbf{X}}\right)=\mathcal{O}(n^{-3})$, and by Lemma \ref{lemma:order_adjugate} we can see that  $\operatorname{adj}\left(\overline{\mathbf{X}}^T\overline{\mathbf{X}}\right)_{22}=\mathcal{O}(n^{-5/2})$ and $\operatorname{adj}\left(\overline{\mathbf{X}}^T\overline{\mathbf{X}}\right)_{33}=\mathcal{O}(n^{-2})$.
Hence, by the formula that $$\left(\overline{\mathbf{X}}^T\overline{\mathbf{X}}\right)^{-1}_{ij}=\left(\frac{1}{\operatorname{det}\left(\overline{\mathbf{X}}^T\overline{\mathbf{X}}\right)}\operatorname{adj}\left(\overline{\mathbf{X}}^T\overline{\mathbf{X}}\right)\right)_{ij},$$
we have
\begin{equation}
\label{eqn:order_XX} \left(\overline{\mathbf{X}}^T\overline{\mathbf{X}}\right)^{-1}_{22}=\mathcal{O}(n^{1/2})\quad \text{and} \quad \left(\overline{\mathbf{X}}^T\overline{\mathbf{X}}\right)^{-1}_{33}=\mathcal{O}(n).
\end{equation}
By the classical results of linear regression (see, e.g., equation $(2.15)$ on page $12$ of \cite{rao2008linear}), we have that
$$\Var(\widehat{\beta}_1)=\big(\overline{\mathbf{X}}^T\overline{\mathbf{X}}\big)_{22}^{-1}\Var(\overline{\epsilon}_{j,n})\quad\text{and}\quad \Var(\widehat{\beta}_2)=\big(\overline{\mathbf{X}}^T\overline{\mathbf{X}}\big)_{33}^{-1}\Var(\overline{\epsilon}_{j,n}).$$
Then by the delta method (see, Proposition $9.32$ in \cite{keener2011theoretical})
 and by equation \eqref{eqn:order_XX}, together with the fact that $\frac{m}{n}\overline{\epsilon}_{j,n}$ convergences in distribution to a normal distribution having mean zero and positive finite variance, one obtains that 
$$\Var(\widehat{\beta}_1)= \mathcal{O}\left(n^{1/2} \times \frac{n}{m}\right)\quad\text{and}\quad\Var(\widehat{\beta}_2)= \mathcal{O}\left(n \times \frac{n}{m}\right).$$  
 
By the covariance inequality that for two random variables $\zeta_1$ and $\zeta_2$ their covariance
$$\Cov(\zeta_1, \zeta_2)\leq \sqrt{\Var(\zeta_1)\Var(\zeta_2)},$$
we have 
\begin{align*}
\frac{\overline{\beta}_1^2}{4\overline{\beta}_2^2}\bigg[ \frac{\Var(\widehat{\beta}_1)}{\overline{\beta}_1^2}-2\frac{\Cov(\widehat{\beta}_1,\widehat{\beta}_2)}{\overline{\beta}_1\overline{\beta}_2}+\frac{\Var(\widehat{\beta}_2)}{\overline{\beta}_2^2}\bigg]= \mathcal{O}\left( \frac{n}{n^2} \bigg[\frac{n^{1/2}\times \frac{n}{m}}{n}\bigg] \right)= \mathcal{O}\left( \frac{n^{1/2}\times\frac{1}{m}}{n} \right).
\end{align*}
By the delta method, one has that as $n\rightarrow \infty$,
$
\frac{n}{n^{1/2}\times\frac{1}{m}}\left(\frac{\widehat{\beta}_1}{-2\widehat{\beta}_2}\right)
$
convergences in distribution to a normal distribution having positive finite variance.
Recall that under the conditions imposed in Theorem \ref{thm:MCestimator}, we have equation \eqref{eqn:MCLEestimator_formula}.
Then, in the context of Monte Carlo inference under investigation, we have $$\operatorname{SE_{MC}^2}=\mathcal{O}\left( \Var\left(\frac{\widehat{\beta}_1}{-2\widehat{\beta}_2}\right)\right) = \mathcal{O}\left( \frac{n^{1/2}\times\frac{1}{m}}{n} \right).$$
In the context of Monte Carlo inference under investigation, the statistical standard error $\operatorname{SE_{stat}}$ is the standard deviation of the MCLE constructed with no Monte Carlo error.
Thus $\operatorname{SE_{stat}^2}$ is given by the variance of the classical efficient estimator $\widehat{\theta}^{\oneStepA}_n$, i.e.,
\begin{equation}
\label{eqn:se_thetaA}
\operatorname{SE_{stat}^2}=\Var\left[\widehat{\theta}^{\oneStepA}_n \right]=\frac{1}{n{\mathcal{I}}(\theta)}= \mathcal{O}\left( \frac{1}{n}\right).
\end{equation}
Hence, given $\mathcal{O}(\sqrt{n})\ll m(n)\ll \mathcal{O}(n)$, we have $\operatorname{SE_{MC}^2}/\operatorname{SE_{stat}^2}\to 0$.
Furthermore, the gradient of the bias in the likelihood evaluation, $C_{\gamma} n \big/ m$, leads to a bias of order $1/m$ in finding the location of $\theta$ that gives the maximum of the metamodel.
Taking $\mathcal{O}(\sqrt{n})\ll m(n)\ll \mathcal{O}(n)$ ensures the asymptotic bias in the estimator is negligible compared $\operatorname{SE_{stat}}$.
\end{proof}

\section*{Acknowledgements}
The authors would like to thank the anonymous reviewers, the Associate Editor, and the Editor-in-Chief for their constructive comments that greatly improved the quality of this paper. This research project was supported by NSF grant DMS-$1761603$.

\bibliography{ms}
\bigskip
\bigskip

\begin{table}[h!]
	\begin{center}
		\caption{Table of Notation}	\label{tab:TableOfNotation}
		\begin{tabular}{r c p{7cm} }
			\toprule
			$s(\theta)=s(\cdot\;;\theta):=\sqrt{p(\cdot\;;\theta)}$ && Square root of density $p(\cdot\;;\theta)$, Eqn. \eqref{eqn:density_def}.\\
			$\mathbf{l}(\theta)=\sum_{i=1}^n l(Y_i\;;\theta)$ && Log-likelihood of samples $(Y_1,\cdots,Y_n)$, Eqn. \eqref{eqn:Sample_loglikelihood}.\\
			$\dot{\mathbf{I}}(\theta)=2\frac{\dot{s}(\theta)}{s(\theta)}\mathbbm{1}_{\{s(\theta)>0\}}$ & & Variable defined in Definition \ref{def:2nd_smooth_point}.\\
			$\ddot{\mathbf{I}}(\theta)=2\frac{\ddot{s}(\theta)}{s(\theta)}\mathbbm{1}_{\{s(\theta)>0\}}$ & & Variable defined in Definition \ref{def:2nd_smooth_point}.\\
			$\dddot{\mathbf{I}}(\theta)=2\frac{\dddot{s}(\theta)}{s(\theta)}\mathbbm{1}_{\{s(\theta)>0\}}$ & & Variable defined in Assumption \ref{model_assumption}.\\
			$\ddddot{\mathbf{I}}(\theta)=2\frac{\ddddot{s}(\theta)}{s(\theta)}\mathbbm{1}_{\{s(\theta)>0\}}$ & & Variable defined in Assumption \ref{model_assumption}.\\
			${\mathcal{I}}(\theta)=E_{\theta}[ \dot{\mathbf{I}}(\theta)]^2$ &  &  The second moment of $\dot{\mathbf{I}}(\theta)$, Definition \ref{def:2nd_smooth_point}.\\
			$T_n=\left\{\frac{s(\theta+\delta_n t)}{s(\theta)}-1-\frac{\delta_n  \dot{s}(\theta)t}{s(\theta)} \right\}\mathbbm{1}_{\{s(\theta)>0\}}$ & & Key variable in this paper, Eqn. \eqref{eqn:Tn}.\\
			$A_n=\big\{\max_{1\leq i \leq n}|T_{ni}+\frac{1}{2}\delta_n t_n \dot{\mathbf{I}}(\theta)|<\eta\big\}$ && Truncated variable, Eqn. \eqref{eqn:An_classical}.\\
			$\widetilde{A}=\widetilde{A}_n(\epsilon)=\big\{\max_{1\leq i \leq n}|T_{ni}|<\epsilon\big\}$ & & Truncated variable, Eqn. \eqref{eqn:An}.\\
			$S_n(\theta)=\frac{1}{\sqrt{n}}\sum_{i=1}^n \dot{\mathbf{I}}(Y_i\;;\theta)$ & & Variable defined in Eqn. \eqref{eqn:S_n_theta}.\\
			$V_n(\theta)=\frac{1}{\sqrt{n}}\sum_{i=1}^n \left[ \ddot{\mathbf{I}}(Y_i \;;\theta) -E_{\theta}[\ddot{\mathbf{I}}(\theta)] \right]$ & & Variable defined in Eqn. \eqref{eqn:V_n_theta}.\\
			$U_n(\theta)=\frac{1}{\sqrt{n}}\sum_{i=1}^n \left[\dot{\mathbf{I}}^2(Y_i\;;\theta) -{\mathcal{I}}(\theta)\right]$ & & Variable defined in Eqn. \eqref{eqn:U_n_theta}.\\			
			$\widehat\theta^{\oneStepB}_n=\theta_n^{\ast}+\sqrt{n}\times \frac{S_n(\theta_n^{\ast})}{{\mathcal{I}}(\theta_n^{\ast})}$ && Estimator defined in Eqn. \eqref{eq:classical_onestep_est}.\\
			$\widehat{\theta}^{\oneStepA}_n=\theta_n^{\ast}+\delta_n \times \frac{\sqrt{n}\delta_nS_n(\theta_n^{\ast})}{n\delta_n^2 \mathcal{I}(\theta_n^{\ast})}$ && Estimator defined in Eqn. \eqref{eqn:rescale_onestep}.\\
			$\widehat\theta^{\text{MCLE}}_n$&& Estimator defined in Eqn. \eqref{eqn:MCLEestimator_def}.\\
			\bottomrule
		\end{tabular}
	\end{center}	
\end{table}

\newpage
\appendix

\section{Proof of Lemma \ref{lem:Tn_second_moment}}\label{app:Tn_second_moment}
\textbf{Proof of equation \eqref{eqn::Tn_second_moment1}}:
	Note that
	\begin{equation}
	\begin{split}
	& \hspace*{-0.4cm} E_{\theta}\left |T_n- \frac{1}{4}(\delta_n t_n)^2 \ddot{\mathbf{I}}(\theta)\right|^2\\
	=& \int \left(\left\{\frac{s(\theta+\delta_n t_n)}{s(\theta)}-1-\frac{1}{2}\delta_n t_n \dot{\mathbf{I}}(\theta) \right\}-\frac{1}{4}(\delta_n t_n)^2 \ddot{\mathbf{I}}(\theta)\right)^2 s^2(\theta)d\mu\\
	=& \left\lVert s(\theta+\delta_n t_n)-s(\theta)- \delta_n t_n \dot{s}(\theta)-\frac{1}{2}(\delta_n t_n)^2\ddot{s}(\theta)\right\rVert^2.
	\end{split}
	\end{equation}
	Recall that, by Taylor series, for any function $h$ with $(n+1)$-th derivative $h^{(n+1)}$ we have
	\begin{equation}
	\label{eqn:taylor_series}
	\begin{split}
	h(x)=&h(x_0)+(x-x_0)h'(x_0)+\frac{h''(x_0)}{2!}(x-x_0)^2+\cdots\\
	&+\frac{h^{(n)}(x_0)}{n!}(x-x_0)^n
	+\frac{1}{n!}\int_{x_0}^x (x-t)^nh^{(n+1)}(t)dt.
	\end{split}
	\end{equation}
	Hence, for $K$ compact, 
	\begin{equation}
	\begin{split}
	& \hspace*{-1.2cm} \sup_{\theta\in K}\frac{1}{(\delta_n t_n)^4} \left\lVert s(\theta+\delta_n t_n)-s(\theta)- \delta_n t_n \dot{s}(\theta)-\frac{1}{2}(\delta_n t_n)^2\ddot{s}(\theta)\right\rVert^2\\
	=& \sup_{\theta\in K}\frac{1}{(\delta_n t_n)^4} \left\lVert (\delta_n t_n)^2 \left(\int_0^1 (1-\lambda)\ddot{s}(\theta+\lambda \delta_n t_n)d\lambda -\frac{1}{2}\ddot{s}(\theta) \right) \right\rVert^2\\
	\leq &  \sup_{\theta\in K}\frac{1}{(\delta_n t_n)^4} \left\lVert (\delta_n t_n)^2 \int_0^1 (1-\lambda)\left(\ddot{s}(\theta+\lambda \delta_n t_n) -\ddot{s}(\theta) \right) d\lambda \right\rVert^2\\
	\leq &   \int_0^1 (1-\lambda)\sup_{\theta\in K}\left\lVert  \left(\ddot{s}(\theta+\lambda \delta_n t_n) -\ddot{s}(\theta) \right) \right\rVert^2 d\lambda\\
	\rightarrow & \;  0,
	\end{split}
	\end{equation}
	as $\delta_n\rightarrow 0$ and $0<|t_n|\leq M$, by the continuity of the map $\theta\rightarrow \ddot{s}(\theta)$.\\
	
\noindent	\textbf{Proof of equation \eqref{eqn::Tn_second_moment2}}:
	Note that
	\begin{align*}
	& \hspace*{-0.4cm} E_{\theta}\left |T_n- \frac{1}{4}(\delta_n t_n)^2 \ddot{\mathbf{I}}(\theta)-\frac{1}{12}(\delta_nt_n)^3\dddot{\mathbf{I}}(\theta)\right|^2\\
	=& \int \left(\left\{\frac{s(\theta+\delta_n t_n)}{s(\theta)}-1-\frac{1}{2}\delta_n t_n \dot{\mathbf{I}}(\theta) \right\}-\frac{1}{4}(\delta_n t_n)^2 \ddot{\mathbf{I}}(\theta)-\frac{1}{12}(\delta_n t_n)^3\dddot{\mathbf{I}}(\theta)\right)^2 s^2(\theta)d\mu\\
	=& \left\lVert s(\theta+\delta_n t_n)-s(\theta)- \delta_n t_n \dot{s}(\theta)-\frac{1}{2}(\delta_n t_n)^2\ddot{s}(\theta)-\frac{1}{3!}(\delta_n t_n)^3\dddot{s}(\theta)\right\rVert^2.
	\end{align*}
	Hence, for $K$ compact, 
	\begin{equation}
	\begin{split}
	& \hspace*{-1.1cm} \sup_{\theta\in K}\frac{1}{(\delta_n t_n)^6} \left\lVert s(\theta+\delta_n t_n)-s(\theta)- \delta_n t_n \dot{s}(\theta)-\frac{1}{2}(\delta_n t_n)^2\ddot{s}(\theta)-\frac{1}{3!}(\delta_n t_n)^3\dddot{s}(\theta)\right\rVert^2\\
	=& \sup_{\theta\in K}\frac{1}{(\delta_n t_n)^6} \left\lVert (\delta_n t_n)^3 \left(\int_0^1 \frac{1}{2} (1-\lambda)^2\dddot{s}(\theta+\lambda \delta_n t_n)d\lambda -\frac{1}{3!}\dddot{s}(\theta) \right) \right\rVert^2\\
	\leq &  \sup_{\theta\in K}\frac{1}{(\delta_n t_n)^6} \left\lVert (\delta_n t_n)^3 \frac{1}{2}\int_0^1 (1-\lambda)^2\left(\dddot{s}(\theta+\lambda \delta_n t_n) -\dddot{s}(\theta) \right) d\lambda \right\rVert^2\\
	\leq &   \int_0^1 \frac{1}{2}(1-\lambda)^2\sup_{\theta\in K}\left\lVert  \left(\dddot{s}(\theta+\lambda \delta_n t_n) -\dddot{s}(\theta) \right) \right\rVert^2 d\lambda\\
	\rightarrow & \;  0,
	\end{split}
	\end{equation}
	as $\delta_n\rightarrow 0$ and $0<|t_n|\leq M$, by the continuity of the map $\theta\rightarrow \dddot{s}(\theta)$.\\
	
\noindent	\textbf{Proof of equation \eqref{eqn::Tn_second_moment3}}:
	Note that
	\begin{align*}
	& \hspace*{-0.2cm} E_{\theta}\left |T_n- \frac{1}{4}(\delta_n t_n)^2 \ddot{\mathbf{I}}(\theta)-\frac{1}{12}(\delta_n t_n)^3\dddot{\mathbf{I}}(\theta)-\frac{1}{48}(\delta_n t_n)^4\ddddot{\mathbf{I}}(\theta)\right|^2\\
	=& \int \left(\left\{\frac{s(\theta+\delta_n t_n)}{s(\theta)}-1-\frac{1}{2}\delta_n t_n \dot{\mathbf{I}}(\theta) \right\}-\frac{1}{4}(\delta_n t_n)^2 \ddot{\mathbf{I}}(\theta)-\frac{1}{12}(\delta_n t_n)^3\dddot{\mathbf{I}}(\theta)\right.\\
	&\left.-\frac{1}{48}(\delta_n t_n)^4\ddddot{\mathbf{I}}(\theta)\right)^2 s^2(\theta)d\mu\\
	=& \left\lVert s(\theta+\delta_n t_n)-s(\theta)- \delta_n t_n \dot{s}(\theta)-\frac{1}{2}(\delta_n t_n)^2\ddot{s}(\theta)-\frac{1}{3!}(\delta_n t_n)^3\dddot{s}(\theta)-\frac{1}{4!}(\delta_n t_n)^4\ddddot{s}(\theta)\right\rVert^2.
	\end{align*}
	Hence, for $K$ compact, 
	\begin{align*}
	& \hspace*{-1.2cm} \sup_{\theta\in K}\frac{1}{(\delta_n t_n)^8} \left\lVert s(\theta+\delta_n t_n)-s(\theta)- \delta_n t_n \dot{s}(\theta)-\frac{1}{2}(\delta_n t_n)^2\ddot{s}(\theta)-\frac{1}{3!}(\delta_n t_n)^3\dddot{s}(\theta)\right.\\
	&\hspace*{1.2cm}\left.-\frac{1}{4!}(\delta_n t_n)^4\ddddot{s}(\theta)\right\rVert^2\\
	=& \sup_{\theta\in K}\frac{1}{(\delta_n t_n)^8} \left\lVert (\delta_n t_n)^4 \left(\int_0^1 \frac{1}{3!} (1-\lambda)^3\ddddot{s}(\theta+\lambda \delta_n t_n)d\lambda -\frac{1}{4!}\ddddot{s}(\theta) \right) \right\rVert^2\\
	\leq &  \sup_{\theta\in K}\frac{1}{(\delta_n t_n)^8} \left\lVert (\delta_n t_n)^4 \frac{1}{3!}\int_0^1 (1-\lambda)^3\left(\ddddot{s}(\theta+\lambda \delta_n t_n) -\ddddot{s}(\theta) \right) d\lambda \right\rVert^2\\
	\leq &   \int_0^1 \frac{1}{3!}(1-\lambda)^3\sup_{\theta\in K}\left\lVert  \left(\ddddot{s}(\theta+\lambda \delta_n t_n) -\ddddot{s}(\theta) \right) \right\rVert^2 d\lambda\\
	\rightarrow & \;  0,
	\end{align*}
	as $\delta_n\rightarrow 0$ and $0<|t_n|\leq M$, by the continuity of the map $\theta\rightarrow \ddddot{s}(\theta)$.

\section{Proof of Lemma \ref{lem:ddot_I2_ui}}\label{app:ddot_I2_ui}
\textbf{Proof of equation \eqref{eqn:ddot_I2_ui_1}: }	
We apply reductio ad absurdum here by assuming that there exist $\theta_n \in K$ and $\lambda_n \rightarrow \infty$ such that
\begin{equation}
\begin{split}\label{eqn:reductio_ad_absurdum1}
E_{\theta_n}\left[\left |\ddot{\mathbf{I}}(\theta_n)\right|^2 \mathbbm{1}_{\left\{\left |\ddot{\mathbf{I}}(\theta_n)\right|\geq \lambda_n\right\}}\right]>\alpha>0.
\end{split}
\end{equation}
Since $K$ is compact, we may assume that $\theta_n \rightarrow \theta \in K$. 

Recall that 
$$E_{\theta}[\ddot{\mathbf{I}}(\theta)]^2=E_{\theta}|\ddot{\mathbf{I}}(\theta)|^2=4\int \ddot{s}^2(\theta)\mathbbm{1}_{\left\{s(\theta)>0\right\}} d\mu.$$
We have
\begin{align*}
&\hspace*{-1.2cm} |\ddot{\mathcal{I}}(\theta+h)-E_{\theta}[\ddot{\mathbf{I}}(\theta)]^2|\\
=& 4\left| \int \ddot{s}^2(\theta+h)d\mu- \int \ddot{s}^2(\theta)d\mu \right|\\
\leq & 4\int |\ddot{s}(\theta+h)|\cdot |\ddot{s}(\theta+h)-\ddot{s}(\theta)|d\mu+4\int |\ddot{s}(\theta)|\cdot |\ddot{s}(\theta+h)-\ddot{s}(\theta)|d\mu\\
\leq & 4\left\lVert \ddot{s}(\theta+h) \right\rVert \cdot  \left\lVert \ddot{s}(\theta+h)-\ddot{s}(\theta) \right\rVert + 4\left\lVert \ddot{s}(\theta) \right\rVert \cdot \left\lVert \ddot{s}(\theta+h)-\ddot{s}(\theta) \right\rVert\\
\rightarrow & \;  0,
\end{align*}
as $h \rightarrow 0$, by the continuity of $\theta \rightarrow \ddot{s}(\theta)$ in $\mathcal{L}_2(\mu)$. Hence, we have that 
$E_{\theta}[\ddot{\mathbf{I}}(\theta)]^2$ is continuous in $\Theta \in \mathbb{R}$.
It follows that 
$$E_{\theta_n}\left |\ddot{\mathbf{I}}(\theta_n)\right|^2 \rightarrow E_{\theta}|\ddot{\mathbf{I}}(\theta)|^2.$$
Hence, by Lemma A.$7.2.$B of \cite{bickel1993efficient} and Assumption \ref{model_assumption}, the sequence of random variable $\{ |\ddot{\mathbf{I}}(\theta_n)|^2\}$ is uniformly integrable, hence
$$\lim_{n\rightarrow \infty}E_{\theta_n}\left |\ddot{\mathbf{I}}(\theta_n)\right|^2 \mathbbm{1}_{\left\{\left |\ddot{\mathbf{I}}(\theta_n)\right|\geq \lambda_n\right\}}=0,$$
which contradicts to \eqref{eqn:reductio_ad_absurdum1}. Therefore, one has
$$\lim_{\lambda \rightarrow \infty} \sup_{\theta\in K} E_{\theta}\left[|\ddot{\mathbf{I}}(\theta)|^2 \mathbbm{1}_{\left\{|\ddot{\mathbf{I}}(\theta)|\geq \lambda\right\}}\right]=0,$$
as desired. \\

\noindent\textbf{Proof of equation \eqref{eqn:ddot_I2_ui_2}: }
We apply reductio ad absurdum here by assuming that there exist $\theta_n \in K$ and $\lambda_n \rightarrow \infty$ such that
\begin{equation}
\begin{split}\label{eqn:reductio_ad_absurdum2}
E_{\theta_n}\left[\left |\dot{\mathbf{I}}(\theta_n)\right|^4 \mathbbm{1}_{\left\{\left |\dot{\mathbf{I}}(\theta_n)\right|\geq \lambda_n\right\}}\right]>\alpha>0.
\end{split}
\end{equation}
Since $K$ is compact, we may assume that $\theta_n \rightarrow \theta \in K$. 

We firstly show that $E_{\theta}\left(\dot{\mathbf{I}}(\theta)\right)^4$ is continuous in $\Theta \in \mathbb{R}$.
By the continuity of $\theta \rightarrow p(\theta)$ which is given by Proposition $1$.(i) on page $13$ of \cite{bickel1993efficient},  
$\theta \rightarrow s(\theta)$ is continuous which is given by the definition of $s(\cdot)$. Let $\phi(\theta)=s^{-1}(\theta)$ for $s(\theta)\neq 0$, and then $\phi(\theta)$ is continuous. Note that
$\dot{\phi}(\theta)=-s^{-2}(\theta)\dot{s}(\theta)\mathbbm{1}_{\{s(\theta)>0\}} $. By the continuity of 
$\theta \rightarrow \dot{s}(\theta)$ which is given by Definition $2$.(ii) on page $12$ of \cite{bickel1993efficient} and our Assumption \ref{model_assumption}, we know that $\dot{\phi}(\theta)$ is continuous with respect to $\theta$.
Note that
\begin{align*}
\frac{1}{16}E_{\theta}\left(\dot{\mathbf{I}}(\theta)\right)^4
=\int \left(\frac{ \dot{s}^4(\theta)}{s^2(\theta)}\right) \mathbbm{1}_{\{s(\theta)>0\}}  d\mu=-\int \dot{s}^3(\theta)\dot{\phi}(\theta) d\mu.
\end{align*}
 We have that as $h \rightarrow 0$,
\begin{align*}
&\hspace*{-1.1cm}  \left|\int \dot{s}^3(\theta+h)\dot{\phi}(\theta+h) d\mu-\int \dot{s}^3(\theta)\dot{\phi}(\theta) d\mu\right|\\
\leq & \int |\dot{s}^3(\theta+h)|\cdot |\dot{\phi}(\theta+h)-\dot{\phi}(\theta)|d\mu+\int |\dot{\phi}(\theta)|\cdot |\dot{s}^3(\theta+h)-\dot{s}^3(\theta)|d\mu\nonumber\\
\leq & \left\lVert \dot{s}^3(\theta+h) \right\rVert \cdot  \left\lVert \dot{\phi}(\theta+h)-\dot{\phi}(\theta) \right\rVert + \left\lVert \dot{\phi}(\theta) \right\rVert \cdot \left\lVert \dot{s}^3(\theta+h)-\dot{s}^3(\theta) \right\rVert\nonumber\\
\rightarrow & \;  0,
\end{align*}
by the continuity of $\theta \rightarrow \dot{s}(\theta)$ in $\mathcal{L}_2(\mu)$ and the continuity of $\theta \rightarrow \dot{\phi}(\theta)$ in $\mathcal{L}_2(\mu)$. 
It follows that 
$$E_{\theta_n}\left |\dot{\mathbf{I}}(\theta_n)\right|^4 \rightarrow E_{\theta}|\dot{\mathbf{I}}(\theta)|^4.$$
Hence, by Lemma A.$7.2.$B of \cite{bickel1993efficient} and Assumption \ref{model_assumption}, the sequence of random variable $\{ |\dot{\mathbf{I}}(\theta_n)|^4\}$ is uniformly integrable, hence
$$\lim_{n\rightarrow \infty}E_{\theta_n}\left |\dot{\mathbf{I}}(\theta_n)\right|^4 \mathbbm{1}_{\left\{\left |\dot{\mathbf{I}}(\theta_n)\right|\geq \lambda_n\right\}}=0,$$
which contradicts to \eqref{eqn:reductio_ad_absurdum2}. Therefore, one has
$$\lim_{\lambda \rightarrow \infty} \sup_{\theta\in K} E_{\theta}\left[|\dot{\mathbf{I}}(\theta)|^4 \mathbbm{1}_{\left\{|\dot{\mathbf{I}}(\theta)|\geq \lambda\right\}}\right]=0,$$
as desired. \\

\noindent\textbf{Proof of equation \eqref{eqn:ddot_I2_ui_3}: }	
We apply reductio ad absurdum here by assuming that there exist $\theta_n \in K$ and $\lambda_n \rightarrow \infty$ such that
\begin{equation}
\begin{split}\label{eqn:reductio_ad_absurdum3}
E_{\theta_n}\left[\left |\ddddot{\mathbf{I}}(\theta_n)\right| \mathbbm{1}_{\left\{\left |\ddddot{\mathbf{I}}(\theta_n)\right|\geq \lambda_n\right\}}\right]>\alpha>0.
\end{split}
\end{equation}
Since $K$ is compact, we may assume that $\theta_n \rightarrow \theta \in K$. 
Note that
\begin{align*}
E_{\theta}[\ddddot{\mathbf{I}}(\theta)]
=2\int\ddddot{s}(\theta)s(\theta) d\mu
\end{align*}
and
\begin{equation*}
\begin{split}
&\hspace*{-1.1cm} \left|\int|\ddddot{s}(\theta+h)|s(\theta+h) d\mu-\int |\ddddot{s}(\theta)|s(\theta) d\mu\right|\\
\leq & \int |\ddddot{s}(\theta+h)|\cdot |{s}(\theta+h)-{s}(\theta)|d\mu+\int |{s}(\theta)|\cdot \big||\ddddot{s}(\theta+h)|-|\ddddot{s}(\theta)|\big|d\mu\\
\leq & \left\lVert \ddddot{s}(\theta+h) \right\rVert \cdot  \left\lVert {s}(\theta+h)-{s}(\theta) \right\rVert + \left\lVert {s}(\theta) \right\rVert \cdot \big\lVert |\ddddot{s}(\theta+h)|-|\ddddot{s}(\theta)| \big\rVert\\
\rightarrow & \;  0,
\end{split}
\end{equation*}
as $h \rightarrow 0$, by the continuity of $\theta \rightarrow \ddddot{s}(\theta)$ in $\mathcal{L}_2(\mu)$, the continuity of $\theta \rightarrow s(\theta)$ in $\mathcal{L}_2(\mu)$, and the fact that the absolute value function of a continuous function is continuous. 
It follows that 
$$E_{\theta_n}\left |\ddddot{\mathbf{I}}(\theta_n)\right| \rightarrow E_{\theta}\left |\ddddot{\mathbf{I}}(\theta)\right|.$$
Hence, by Lemma A.$7.2.$B of \cite{bickel1993efficient} and Assumption \ref{model_assumption}, the sequence of random variable $\{ |\ddddot{\mathbf{I}}(\theta_n)|\}$ is uniformly integrable, hence
$$\lim_{n\rightarrow \infty}E_{\theta_n}\left |\ddddot{\mathbf{I}}(\theta_n)\right| \mathbbm{1}_{\left\{\left |\ddddot{\mathbf{I}}(\theta_n)\right|\geq \lambda_n\right\}}=0,$$
which contradicts to \eqref{eqn:reductio_ad_absurdum3}. Therefore, one has
$$\lim_{\lambda \rightarrow \infty} \sup_{\theta\in K} E_{\theta}\left[\left |\ddddot{\mathbf{I}}(\theta)\right| \mathbbm{1}_{\left\{\left |\ddddot{\mathbf{I}}(\theta)\right|\geq \lambda\right\}}\right]=0,$$
as desired. \\

\noindent\textbf{Proof of equation \eqref{eqn:ddot_I2_ui_4}: }	
We apply reductio ad absurdum here by assuming that there exist $\theta_n \in K$ and $\lambda_n \rightarrow \infty$ such that
\begin{equation}
\begin{split}\label{eqn:reductio_ad_absurdum4}
E_{\theta_n}\left[\left |\dddot{\mathbf{I}}(\theta_n)\dot{\mathbf{I}}(\theta_n)\right| \mathbbm{1}_{\left\{\left |\dddot{\mathbf{I}}(\theta_n)\dot{\mathbf{I}}(\theta_n)\right|\geq \lambda_n\right\}}\right]>\alpha>0.
\end{split}
\end{equation}
Since $K$ is compact, we may assume that $\theta_n \rightarrow \theta \in K$. 
Note that
\begin{align*}
E_{\theta}[\dddot{\mathbf{I}}(\theta)\dot{\mathbf{I}}(\theta)]
=4\int\dddot{s}(\theta)\dot{s}(\theta) d\mu
\end{align*}
and
\begin{equation*}
\begin{split}
&\hspace*{-1.1cm} \left|\int|\dddot{s}(\theta+h)\dot{s}(\theta+h)| d\mu-\int|\dddot{s}(\theta)\dot{s}(\theta)| d\mu\right|\\
\leq & \int |\dddot{s}(\theta+h)|\cdot \big||\dot{s}(\theta+h)|-|\dot{s}(\theta)|\big|d\mu+\int |\dot{s}(\theta)|\cdot \big||\dddot{s}(\theta+h)|-|\dddot{s}(\theta)|\big|d\mu\\
\leq & \left\lVert \dddot{s}(\theta+h) \right\rVert \cdot  \big\lVert |\dot{s}(\theta+h)|-|\dot{s}(\theta)| \big\rVert + \left\lVert \dot{s}(\theta) \right\rVert \cdot \big\lVert |\dddot{s}(\theta+h)|-|\dddot{s}(\theta)| \big\rVert\\
\rightarrow & \;  0,
\end{split}
\end{equation*}
as $h \rightarrow 0$, by the continuity of $\theta \rightarrow \dddot{s}(\theta)$ in $\mathcal{L}_2(\mu)$, the continuity of $\theta \rightarrow \dot{s}(\theta)$ in $\mathcal{L}_2(\mu)$, and the fact that the absolute value function of a continuous function is continuous. 
It follows that 
$$E_{\theta_n}\left |\dddot{\mathbf{I}}(\theta_n)\dot{\mathbf{I}}(\theta_n)\right| \rightarrow E_{\theta}|\dddot{\mathbf{I}}(\theta)\dot{\mathbf{I}}(\theta)|.$$
Hence, by Lemma A.$7.2.$B of \cite{bickel1993efficient} and Assumption \ref{model_assumption}, 
$$\lim_{n\rightarrow \infty}E_{\theta_n}\left |\dddot{\mathbf{I}}(\theta_n)\dot{\mathbf{I}}(\theta_n)\right| \mathbbm{1}_{\left\{\left |\dddot{\mathbf{I}}(\theta_n)\dot{\mathbf{I}}(\theta_n)\right|\geq \lambda_n\right\}}=0,$$
which contradicts to \eqref{eqn:reductio_ad_absurdum4}. Therefore, one has
$$\lim_{\lambda \rightarrow \infty} \sup_{\theta\in K} E_{\theta}\left[|\dddot{\mathbf{I}}(\theta)\dot{\mathbf{I}}(\theta)| \mathbbm{1}_{\left\{|\dddot{\mathbf{I}}(\theta)\dot{\mathbf{I}}(\theta)|\geq \lambda\right\}}\right]=0,$$
as desired. 

\section{Proof of Proposition \ref{prop:sum_T_secondmoment}}\label{app:sum_T_secondmoment}
We firstly prove the case that $r=0$. Note that
\begin{equation*}
\begin{split}
&\hspace*{-0.9cm} \sum_{i=1}^n \left|T_{ni}^2-\frac{1}{16}\delta_n^4t_n^4 E_{\theta}[\ddot{\mathbf{I}}(\theta)]^2\right|\\
\leq & \sum_{i=1}^n \left|T_{ni}^2 -\frac{1}{16}\delta_n^4t_n^4(\ddot{\mathbf{I}}(Y_i\;;\theta))^2\right|
+\sum_{i=1}^n \left|\frac{1}{16}\delta_n^4t_n^4(\ddot{\mathbf{I}}(Y_i\;;\theta))^2 
-\frac{1}{16}\delta_n^4 t_n^4 E_{\theta}[\ddot{\mathbf{I}}(\theta)]^2\right|.
\end{split}
\end{equation*}
On one hand, we have that
\begin{equation*}
\begin{split}
E_{\theta}\sum_{i=1}^n \left|T_{ni}^2 -\frac{1}{16} \delta_n^4t_n^4(\ddot{\mathbf{I}}(Y_i\;;\theta))^2\right|= & nE_{\theta}\left|T_{n}^2 - \left(\frac{1}{4}\delta_n^2 t_n^2\ddot{\mathbf{I}}(\theta)\right)^2 \right|
\end{split}
\end{equation*}
By Lemma \ref{lem:Tn_second_moment}, uniformly in $|t_n| \leq M$ and in $\theta\in K \subset \Theta$ for $K$ compact, one has
$$E_{\theta}\left |T_n-\frac{1}{4} (\delta_n t_n)^2 \ddot{\mathbf{I}}(\theta)\right|^2=o(\delta_n^4),$$
which implies
$$E_{\theta}(\sqrt{n}T_{n})^2=o(1)$$
for $t_n=0$,
and 
$$E_{\theta}\left(\frac{\sqrt{n}}{t_n^2}T_{n} -\frac{1}{4}\sqrt{n}\delta_n^2\ddot{\mathbf{I}}(\theta)\right)^2=o(1)$$
for $0<|t_n| \leq M$.
It follows, by Lemma A.$7.1$ in \cite{bickel1993efficient}, that
\begin{equation}
\label{eqn:nTn_moment}
E_{\theta}\left|(\sqrt{n}T_{n})^2 -\frac{1}{16}(t_n^2\sqrt{n}\delta_n^2\ddot{\mathbf{I}}(\theta))^2 \right|=o(1).
\end{equation}
On the other hand, we have that
\begin{equation*}
\left(\frac{1}{n}\sum_{i=1}^n \ddot{\mathbf{I}}(Y_i\;;\theta)^2
- E_{\theta}[\ddot{\mathbf{I}}(\theta)]^2\right) \xrightarrow[]{a.s.} 0,
\end{equation*}
uniformly in $|t_n| \leq M$ and in $\theta\in K \subset \Theta$ for $K$ compact,
by the definition of $E_{\theta}[\ddot{\mathbf{I}}(\theta)]^2$, Lemma \ref{lem:ddot_I2_ui}, and 
Chung's uniform strong law of large number which can be seen in Theorem A.$7.3$ in \cite{bickel1993efficient}. Hence, 
\begin{equation*}
n\delta_n^4 t_n^4\left(\frac{1}{n}\sum_{i=1}^n \ddot{\mathbf{I}}(Y_i\;;\theta)^2
-  E_{\theta}[\ddot{\mathbf{I}}(\theta)]^2\right) \xrightarrow[]{a.s.} 0,
\end{equation*}
and then
\begin{equation}
\label{sum_T_secondmoment_inequality}
\sum_{i=1}^n \left|T_{ni}^2-\frac{1}{16}\delta_n^4t_n^4 E_{\theta}[\ddot{\mathbf{I}}(\theta)]^2\right|\xrightarrow[]{p} 0,
\end{equation}
uniformly in $|t_n| \leq M$ and in $\theta\in K \subset \Theta$ for $K$ compact, in $P_{\theta}$ probability.
Next, for $r>0$, considering that
\begin{equation*}
\begin{split}
\max_{1\leq i \leq n}\left|T_{ni}^2 - \frac{1}{16}\delta_n^4t_n^4E_{\theta}[\ddot{\mathbf{I}}(\theta)]^2\right|
\leq \sum_{i=1}^n \left|T_{ni}^2 - \frac{1}{16}\delta_n^4t_n^4E_{\theta}[\ddot{\mathbf{I}}(\theta)]^2\right|
\end{split}
\end{equation*}
and 
\begin{equation*}
\begin{split}
&\sum_{i=1}^n \left|T_{ni}^2 - \frac{1}{16}\delta_n^4t_n^4E_{\theta}[\ddot{\mathbf{I}}(\theta)]^2\right|^{1+r}\\
&\leq \max_{1\leq i \leq n}\left|T_{ni}^2 - \frac{1}{16}\delta_n^4t_n^4E_{\theta}[\ddot{\mathbf{I}}(\theta)]^2\right|^r \sum_{i=1}^n \left|T_{ni}^2 - \frac{1}{16}\delta_n^4t_n^4E_{\theta}[\ddot{\mathbf{I}}(\theta)]^2\right|,
\end{split}
\end{equation*}
by \eqref{sum_T_secondmoment_inequality},
we have that in $P_{\theta}$ probability,
$$\sum_{i=1}^n \left|T_{ni}^2 - \frac{1}{16}\delta_n^4 t_n^4E_{\theta}[\ddot{\mathbf{I}}(\theta)]^2\right|^{1+r}\xrightarrow[]{p} 0\quad \text{for}\; r>0,$$  
uniformly in $\theta \in K$ and $|t_n| \leq M$.

\section{Proof of Proposition \ref{prop:Tmulti_dotS}}\label{app:Tmulti_dotS}
We firstly consider the case that $l=2$ and $1\leq k< 6$. Note that
\begin{equation*}
\begin{split}
\sum_{i=1}^n \left| T_{ni}^2\left(\delta_n t_n \dot{\mathbf{I}}(Y_i\;;\theta)\right)^k \right|
\leq & \sum_{i=1}^n \left| \left(T_{ni}^2 - \frac{1}{16}\delta_n^4 t_n^4E_{\theta}[\ddot{\mathbf{I}}(\theta)]^2\right) \left(\delta_n t_n \dot{\mathbf{I}}(Y_i\;;\theta)\right)^k\right|\\
&+\sum_{i=1}^n \left| \frac{1}{16}\delta_n^4 t_n^4E_{\theta}[\ddot{\mathbf{I}}(\theta)]^2 \left(\delta_n t_n \dot{\mathbf{I}}(Y_i\;;\theta)\right)^k\right|,
\end{split}
\end{equation*}
and by H\"older's inequality in the counting measure we have
\begin{equation*}
\begin{split}
&\hspace*{-1.5cm}  \sum_{i=1}^n \left|\left(T_{ni}^2 - \frac{1}{16}\delta_n^4 t_n^4E_{\theta}[\ddot{\mathbf{I}}(\theta)]^2\right) \left(\delta_n t_n \dot{\mathbf{I}}(Y_i\;;\theta)\right)^k\right|\\
\leq & \left[\sum_{i=1}^n \left|T_{ni}^2 - \frac{1}{16}\delta_n^4 t_n^4E_{\theta}[\ddot{\mathbf{I}}(\theta)]^2\right|^{\frac{6}{6-k}}\right]^{\frac{6-k}{6}} \left[\sum_{i=1}^n \left|\delta_n t_n \dot{\mathbf{I}}(Y_i\;;\theta)\right|^{k \times \frac{6}{k}} \right]^{\frac{k}{6}}.
\end{split}
\end{equation*}
We can see that, to complete the proof,  
it suffices to show that uniformly in $|t_n| \leq M$ and in $\theta\in K \subset \Theta$ for $K$ compact, in $P_{\theta}$ probability,
$$\sum_{i=1}^n \left|T_{ni}^2 - \frac{1}{16}\delta_n^4 t_n^4E_{\theta}[\ddot{\mathbf{I}}(\theta)]^2\right|^{1+r}\xrightarrow[]{p} 0\quad \text{for any}\; r\geq 0,$$  
$$\sum_{i=1}^n \left|\delta_n t_n \dot{\mathbf{I}}(Y_i\;;\theta)\right|^m\xrightarrow[]{p} 0,\quad \text{for any}\; m\in \{5,6\},$$ and 
$$\sum_{i=1}^n \left|\frac{1}{16}\delta_n^4 t_n^4E_{\theta}[\ddot{\mathbf{I}}(\theta)]^2 \left(\delta_n t_n \dot{\mathbf{I}}(Y_i\;;\theta)\right)^k \right| \xrightarrow[]{p} 0\quad\text{for any}\; 1\leq k< 6.$$
The above conditions are satisfied by Propositions \ref{prop:sum_T_secondmoment} and  \ref{prop:dotS_fifth}, and noting that for $|t_n| \leq M$ and $\epsilon'>0$
\begin{equation*}
\begin{split}
&\hspace*{-1.5cm}P_{\theta}\left( \sum_{i=1}^n \left|\delta_n^4 t_n^4E_{\theta}[\ddot{\mathbf{I}}(\theta)]^2 \left(\delta_n t_n \dot{\mathbf{I}}(Y_i\;;\theta)\right)^k \right|>\epsilon' \right)\\
\leq & \frac{1}{\epsilon'} E_{\theta}\left( \sum_{i=1}^n \left|\delta_n^4 t_n^4E_{\theta}[\ddot{\mathbf{I}}(\theta)]^2 \left(\delta_n t_n \dot{\mathbf{I}}(Y_i\;;\theta)\right)^k \right|\right)\\
\leq & \frac{(\delta_n)^k M^{k+4}}{\epsilon'}\left|E_{\theta}[\ddot{\mathbf{I}}(\theta)]^2\right| E_{\theta}\left|\dot{\mathbf{I}}(\theta)\right|^k\\
\rightarrow & \;  0.
\end{split}
\end{equation*}

The result can be extended to any $l>2$ and $k\geq 6$, by noting that
\begin{align*}
&\sum_{i=1}^n \left|T_{ni}^l \left(\delta_n t_n \dot{\mathbf{I}}(Y_i\;;\theta)\right)^k \right|\\
&\leq 
\bigg(\max_{1\leq i \leq n} |T_{ni}|^{l-2}\bigg)\bigg(\max_{1\leq i \leq n}\left|\left(\delta_n t_n \dot{\mathbf{I}}(Y_i\;;\theta)\right)^5\right|\bigg) \sum_{i=1}^n \left|T_{ni}^2 \left(\delta_n t_n \dot{\mathbf{I}}(Y_i\;;\theta)\right)^{k-5} \right|\\
&\leq 
\bigg(\max_{1\leq i \leq n} |T_{ni}|^{l-2}\bigg)\bigg(\sum_{i=1}^n\left|\left(\delta_n t_n \dot{\mathbf{I}}(Y_i\;;\theta)\right)^5\right|\bigg) \sum_{i=1}^n \left|T_{ni}^2 \left(\delta_n t_n \dot{\mathbf{I}}(Y_i\;;\theta)\right)^{k-5} \right|
\end{align*}
and using Propositions \ref{prop:A_complement} and \ref{prop:dotS_fifth}.

\section{Proof of Proposition \ref{prop:U_CLT}}\label{app:U_CLT}
Let $f: \mathbb{R}\rightarrow \mathbb{R}$ be bounded and continuous. To prove 
\eqref{eqn:prop_CLT},
it suffices to show that
\begin{equation}
\label{eqn:prop_CLT_suffice}
\sup_{\theta\in K}\left(E_{\theta}f\left(\frac{1}{\sqrt{n}}\sum_{i=1}^n U(Y_i\;;\theta)\right)-E_{\theta}f(Z)\right)\rightarrow 0,
\end{equation}
as $n \rightarrow \infty$, where $Z\sim N(0,E_{\theta}[U^2(\theta)])$. Now suppose that \eqref{eqn:prop_CLT_suffice} fails and then there exists $\theta_n\in K$ such that
\begin{equation}
\label{eqn:prop_CLT_fail}
\limsup_{n\rightarrow \infty}\left(E_{\theta_n}f\left(\frac{1}{\sqrt{n}}\sum_{i=1}^n U(Y_i\;;\theta_n)\right)-E_{\theta_n}f(Z)\right)> 0.
\end{equation}
Since $K$ is compact, without loss of generality, we assume that $\theta_n\rightarrow \theta \in K$.

By Lemma \ref{lem:ddot_I2_ui}, we have
	$$\lim_{\lambda \rightarrow \infty} \sup_{\theta\in K} E_{\theta}\left[|\dot{\mathbf{I}}(\theta)|^4 \mathbbm{1}_{\left\{|\dot{\mathbf{I}}(\theta)|\geq \lambda\right\}}\right]=0,$$
which by Jensen's inequality implies 
	$$\lim_{\lambda \rightarrow \infty} \sup_{\theta\in K} E_{\theta}\left[|\dot{\mathbf{I}}(\theta)|^2 \mathbbm{1}_{\left\{|\dot{\mathbf{I}}(\theta)|\geq \lambda\right\}}\right]=0.$$
Hence, by the definition of $U(Y_i\;;\theta)$, we have
\begin{equation}
\label{eqn:U_u_ui}
\lim_{\lambda\rightarrow \infty}\sup_{\theta\in K}E_{\theta}\left| U(Y_i\;;\theta) \right|^2\mathbbm{1}_{\{\left| U(Y_i\;;\theta) \right|>\lambda\}}=0.
\end{equation}
Applying the bound of Theorem A.$7.4$ on page $470$ of \cite{bickel1993efficient} to the mean $0$ variance $1$ random variable $tU(Y_i\;;\theta_n)/\sigma_n$ where $t\in\mathbb{R}$ and $$\sigma_n^2=t^2E_{\theta}[U^2(Y_i\;;\theta_n)],$$ by \eqref{eqn:U_u_ui}, we have
$$\mathbf{L}_{\theta_n}\left(\frac{t}{\sqrt{n}}\sum_{i=1}^n U(\theta_n)\bigg/\sigma_n\right)\rightarrow N(0,1),
$$
for all $\theta_n$. Now we see that if the continuity of $E_{\theta_n}[U^2(Y_i\;;\theta_n)]$ with respect to $\theta_n$ holds, we can have
$$\mathbf{L}_{\theta_n}\left(\frac{t}{\sqrt{n}}\sum_{i=1}^n U(\theta_n)\right)\rightarrow N(0,t^2 E_{\theta}[U^2(\theta)]),
$$
and then in view of the Cram{\'e}r-Wold device we can obtain the contradiction to \eqref{eqn:prop_CLT_fail} which completes the proof.
Hence, for the reason that the continuity of $E_{\theta}[\dot{\mathbf{I}}^2(\theta)]$ is given in Proposition A.$5.3$.A of \cite{bickel1993efficient}, the continuity of $E_{\theta}[\dot{\mathbf{I}}^4(\theta)]$ with respect to $\theta$ (covered in the proof of Lemma \ref{lem:ddot_I2_ui}) completes the proof.

\section{Proof of Proposition \ref{prop:V_CLT}}\label{app:V_CLT}
Let $f: \mathbb{R}\rightarrow \mathbb{R}$ be bounded and continuous. To prove 
\eqref{eqn:prop_CLT2},
it suffices to show that
\begin{equation}
\label{eqn:prop_CLT_suffice2}
\sup_{\theta\in K}\left(E_{\theta}f\left(\frac{1}{\sqrt{n}}\sum_{i=1}^n V(Y_i\;;\theta)\right)-E_{\theta}f(Z)\right)\rightarrow 0,
\end{equation}
as $n \rightarrow \infty$, where $Z\sim N(0,\Var[\ddot{\mathbf{I}}(\theta)])$. Now suppose that \eqref{eqn:prop_CLT_suffice2} fails and then there exists $\theta_n\in K$ such that
\begin{equation}
\label{eqn:prop_CLT_fail2}
\limsup_{n\rightarrow \infty}\left(E_{\theta_n}f\left(\frac{1}{\sqrt{n}}\sum_{i=1}^n V(Y_i\;;\theta_n)\right)-E_{\theta_n}f(Z)\right)> 0.
\end{equation}
Since $K$ is compact, without loss of generality, we assume that $\theta_n\rightarrow \theta \in K$. 

By the proof of Lemma \ref{lem:ddot_I2_ui}, we know that 
the sequence of random variables $\{ |\ddot{\mathbf{I}}(\theta_n)|^2\}_n$ is uniformly integrable, and then it is not hard to obtain that the sequence of random variables $\{ |V(\theta_n)|^2\}_n$ is uniformly integrable which further yields
\begin{equation}
\label{eqn:V_u_ui}
\lim_{\lambda \rightarrow \infty} \sup_{\theta\in K} E_{\theta}\left[\left |V(\theta)\right|^2 \mathbbm{1}_{\left\{\left |V(\theta)\right|\geq \lambda\right\}}\right]=0.
\end{equation}
Applying the bound of Theorem A.$7.4$ on page $470$ of \cite{bickel1993efficient} to the mean $0$ variance $1$ random variable $tV(Y_i\;;\theta_n)/\sigma_n$ where $t\in\mathbb{R}$ and $$\sigma_n^2=t^2E_{\theta_n}[V^2(Y_i\;;\theta_n)],$$ by \eqref{eqn:V_u_ui}, we have
$$\mathbf{L}_{\theta_n}\left(\frac{t}{\sqrt{n}}\sum_{i=1}^n V(Y_i\;;\theta_n)\bigg/\sigma_n\right)\rightarrow N(0,1),
$$
for all $\theta_n$. Now we see that if the continuity of $E_{\theta_n}[V^2(Y_i\;;\theta_n)]$ with respect to $\theta_n$ holds, we can have
$$\mathbf{L}_{\theta_n}\left(\frac{t}{\sqrt{n}}\sum_{i=1}^n V(Y_i\;;\theta_n)\right)\rightarrow N(0,t^2 E_{\theta}[V^2(\theta)]),
$$
and then in view of the Cram{\'e}r-Wold device we can obtain the contradiction to \eqref{eqn:prop_CLT_fail} which completes the proof.

Recall that we have shown that $E_{\theta}[\ddot{\mathbf{I}}(\theta)]^2$ is continuous in $\Theta \in \mathbb{R}$, in the proof of Lemma \ref{lem:ddot_I2_ui}.
Considering that $$\int \ddot{s}(\theta) s(\theta)\mathbbm{1}_{\left\{s(\theta)=0\right\}} d\mu=0,$$
we have
$$\frac{1}{2}E_{\theta}[\ddot{\mathbf{I}}(\theta)]=\int \ddot{s}(\theta) s(\theta)\mathbbm{1}_{\left\{s(\theta)>0\right\}} d\mu=\int \ddot{s}(\theta) s(\theta) d\mu.$$
Then, as $h \rightarrow 0$, by the continuity of $\theta \rightarrow \ddot{s}(\theta)$ and $\theta \rightarrow s(\theta)$ in $\mathcal{L}_2(\mu)$,
\begin{align*}
&\hspace*{-1.4cm}  \left| \int \ddot{s}(\theta+h)s(\theta+h)d\mu- \int \ddot{s}(\theta)s(\theta)d\mu \right|\\
\leq & \int |\ddot{s}(\theta+h)|\cdot |s(\theta+h)-s(\theta)|d\mu+\int |s(\theta)|\cdot |\ddot{s}(\theta+h)-\ddot{s}(\theta)|d\mu\nonumber\\
\leq & C\left\lVert \ddot{s}(\theta+h) \right\rVert \cdot  \left\lVert s(\theta+h)-s(\theta) \right\rVert + C\left\lVert s(\theta) \right\rVert \cdot \left\lVert \ddot{s}(\theta+h)-\ddot{s}(\theta) \right\rVert\nonumber\\
\rightarrow & \;  0.
\end{align*}
 Hence, we have the continuity of $E_{\theta_n}[V^2(Y_i\;;\theta_n)]$ with respect to $\theta_n$, as desired.

\section{Proof of Lemma \ref{lemma:order_determinate}}\label{app:order_determinate}

We can rewrite $\operatorname{det}\left(\overline{\mathbf{X}}^T\overline{\mathbf{X}}\right)$ as follows
\begin{align*}
\operatorname{det}\left(\overline{\mathbf{X}}^T\overline{\mathbf{X}}\right)=K\mathcal{A}-
\left(\sum_{j=-J}^J (jn^{-1/4})\right)\mathcal{B}+\left(\sum_{j=-J}^J (jn^{-1/4})^2\right)\mathcal{C}-\left(\sum_{j=-J}^J (jn^{-1/4})^3\right)\mathcal{D},
\end{align*}
where 
\begin{align*}
\mathcal{A}=&\operatorname{det}\left( \begin{array}{ccc} 
 \sum_{j=-J}^J (jn^{-1/4})^2 &\sum_{j=-J}^J (jn^{-1/4})^3 &\sum_{j=-J}^J (jn^{-1/4})^4\\
 \sum_{j=-J}^J (jn^{-1/4})^3 &\sum_{j=-J}^J (jn^{-1/4})^4 &\sum_{j=-J}^J (jn^{-1/4})^5\\
 \sum_{j=-J}^J (jn^{-1/4})^4 &\sum_{j=-J}^J (jn^{-1/4})^5 &\sum_{j=-J}^J (jn^{-1/4})^6 \end{array} \right)\\
 =& \sum_{j=-J}^J (jn^{-1/4})^2\left(\sum_{j=-J}^J (jn^{-1/4})^4 \sum_{j=-J}^J (jn^{-1/4})^6-\left(\sum_{j=-J}^J (jn^{-1/4})^5\right)^2\right)\\
 &- \sum_{j=-J}^J (jn^{-1/4})^3\left(\sum_{j=-J}^J (jn^{-1/4})^3 \sum_{j=-J}^J (jn^{-1/4})^6-\sum_{j=-J}^J (jn^{-1/4})^4\sum_{j=-J}^J (jn^{-1/4})^5\right)\\
&+ \sum_{j=-J}^J (jn^{-1/4})^4\left(\sum_{j=-J}^J (jn^{-1/4})^3 \sum_{j=-J}^J (jn^{-1/4})^5-\left(\sum_{j=-J}^J (jn^{-1/4})^4\right)^2\right),
\end{align*}

\begin{align*}
\mathcal{B}=&\operatorname{det}\left( \begin{array}{ccc} 
\sum_{j=-J}^J (jn^{-1/4})  &\sum_{j=-J}^J (jn^{-1/4})^3 &\sum_{j=-J}^J (jn^{-1/4})^4\\
\sum_{j=-J}^J (jn^{-1/4})^2  &\sum_{j=-J}^J (jn^{-1/4})^4 &\sum_{j=-J}^J (jn^{-1/4})^5\\
\sum_{j=-J}^J (jn^{-1/4})^3  &\sum_{j=-J}^J (jn^{-1/4})^5 &\sum_{j=-J}^J (jn^{-1/4})^6 \end{array} \right)\\
 =& \sum_{j=-J}^J (jn^{-1/4})\left(\sum_{j=-J}^J (jn^{-1/4})^4 \sum_{j=-J}^J (jn^{-1/4})^6-\left(\sum_{j=-J}^J (jn^{-1/4})^5\right)^2\right)\\
 &- \sum_{j=-J}^J (jn^{-1/4})^3\left(\sum_{j=-J}^J (jn^{-1/4})^2 \sum_{j=-J}^J (jn^{-1/4})^6-\sum_{j=-J}^J (jn^{-1/4})^3\sum_{j=-J}^J (jn^{-1/4})^5\right)\\
&+ \sum_{j=-J}^J (jn^{-1/4})^4\left(\sum_{j=-J}^J (jn^{-1/4})^2 \sum_{j=-J}^J (jn^{-1/4})^5-\sum_{j=-J}^J (jn^{-1/4})^3\sum_{j=-J}^J (jn^{-1/4})^4\right),
\end{align*}

\begin{align*}
\mathcal{C}=&\operatorname{det}\left( \begin{array}{ccc}
\sum_{j=-J}^J (jn^{-1/4}) & \sum_{j=-J}^J (jn^{-1/4})^2  &\sum_{j=-J}^J (jn^{-1/4})^4\\
\sum_{j=-J}^J (jn^{-1/4})^2 & \sum_{j=-J}^J (jn^{-1/4})^3 &\sum_{j=-J}^J (jn^{-1/4})^5\\
\sum_{j=-J}^J (jn^{-1/4})^3 & \sum_{j=-J}^J (jn^{-1/4})^4  &\sum_{j=-J}^J (jn^{-1/4})^6 \end{array} \right)\\
 =& \sum_{j=-J}^J (jn^{-1/4})\left(\sum_{j=-J}^J (jn^{-1/4})^3 \sum_{j=-J}^J (jn^{-1/4})^6-\sum_{j=-J}^J (jn^{-1/4})^4\sum_{j=-J}^J (jn^{-1/4})^5\right)\\
 &- \sum_{j=-J}^J (jn^{-1/4})^2\left(\sum_{j=-J}^J (jn^{-1/4})^2 \sum_{j=-J}^J (jn^{-1/4})^6-\sum_{j=-J}^J (jn^{-1/4})^3\sum_{j=-J}^J (jn^{-1/4})^5\right)\\
&+ \sum_{j=-J}^J (jn^{-1/4})^4\left(\sum_{j=-J}^J (jn^{-1/4})^2 \sum_{j=-J}^J (jn^{-1/4})^4-\left(\sum_{j=-J}^J (jn^{-1/4})^3\right)^2\right),
\end{align*}
and
\begin{align*}
\mathcal{D}=&\operatorname{det}\left( \begin{array}{ccc} 
\sum_{j=-J}^J (jn^{-1/4}) & \sum_{j=-J}^J (jn^{-1/4})^2 &\sum_{j=-J}^J (jn^{-1/4})^3 \\
\sum_{j=-J}^J (jn^{-1/4})^2 & \sum_{j=-J}^J (jn^{-1/4})^3 &\sum_{j=-J}^J (jn^{-1/4})^4 \\
\sum_{j=-J}^J (jn^{-1/4})^3 & \sum_{j=-J}^J (jn^{-1/4})^4 &\sum_{j=-J}^J (jn^{-1/4})^5  \end{array} \right)
\\
 =& \sum_{j=-J}^J (jn^{-1/4})\left(\sum_{j=-J}^J (jn^{-1/4})^3 \sum_{j=-J}^J (jn^{-1/4})^5-\left(\sum_{j=-J}^J (jn^{-1/4})^4\right)^2\right)\\
 &- \sum_{j=-J}^J (jn^{-1/4})^2\left(\sum_{j=-J}^J (jn^{-1/4})^2 \sum_{j=-J}^J (jn^{-1/4})^5-\sum_{j=-J}^J (jn^{-1/4})^3\sum_{j=-J}^J (jn^{-1/4})^4\right)\\
&+ \sum_{j=-J}^J (jn^{-1/4})^3\left(\sum_{j=-J}^J (jn^{-1/4})^2 \sum_{j=-J}^J (jn^{-1/4})^4-\left(\sum_{j=-J}^J (jn^{-1/4})^3\right)^2\right).
\end{align*}
 By the fact that
\begin{eqnarray*}
&\sum_{i=1}^n i=\frac{n(n+1)}{2}, &\sum_{i=1}^n i^2=\frac{n(n+1)(2n+1)}{6},\\
&\sum_{i=1}^n i^3=\frac{n^2(n+1)^2}{4}, &\sum_{i=1}^n i^4=\frac{n(n+1)(2n+1)(3n^2+3n-1)}{30},\\
&\sum_{i=1}^n i^5=\frac{n^2(n+1)^2(2n^2+2n-1)}{12}, &\sum_{i=1}^n i^6=\frac{n(n+1)(2n+1)(3n^4+6n^3-3n+1)}{42},
\end{eqnarray*}
summarizing the above results, we have $\operatorname{det}\left(\overline{\mathbf{X}}^T\overline{\mathbf{X}}\right)=\mathcal{O}(n^{-3})$.

\section{Proof of Lemma \ref{lemma:order_adjugate}}\label{app:order_adjugate}
We have that
\begin{align*}
&\hspace*{-0.5cm}\operatorname{adj}\left(\overline{\mathbf{X}}^T\overline{\mathbf{X}}\right)_{22}\\
=&\operatorname{det}\left( \begin{array}{cccc} K  &\sum_{j=-J}^J (jn^{-1/4})^2 &\sum_{j=-J}^J (jn^{-1/4})^3 \\
\sum_{j=-J}^J (jn^{-1/4})^2  &\sum_{j=-J}^J (jn^{-1/4})^4 &\sum_{j=-J}^J (jn^{-1/4})^5\\
\sum_{j=-J}^J (jn^{-1/4})^3 &\sum_{j=-J}^J (jn^{-1/4})^5 &\sum_{j=-J}^J (jn^{-1/4})^6 \end{array} \right)\\
=&K\left(\sum_{j=-J}^J (jn^{-1/4})^4 \sum_{j=-J}^J (jn^{-1/4})^6-\left(\sum_{j=-J}^J (jn^{-1/4})^5\right)^2\right)\\
&-\sum_{j=-J}^J (jn^{-1/4})^2 \left(\sum_{j=-J}^J (jn^{-1/4})^2 \sum_{j=-J}^J (jn^{-1/4})^6-\sum_{j=-J}^J (jn^{-1/4})^3\sum_{j=-J}^J (jn^{-1/4})^5\right)\\
&+\sum_{j=-J}^J (jn^{-1/4})^3 \left(\sum_{j=-J}^J (jn^{-1/4})^2 \sum_{j=-J}^J (jn^{-1/4})^5-\sum_{j=-J}^J (jn^{-1/4})^3\sum_{j=-J}^J (jn^{-1/4})^4\right)\\
=&\; \mathcal{O}(n^{-5/2})
\end{align*}
and 
\begin{align*}
&\hspace*{-0.5cm}\operatorname{adj}\left(\overline{\mathbf{X}}^T\overline{\mathbf{X}}\right)_{22}\\
=&\operatorname{det}\left( \begin{array}{cccc} K  &\sum_{j=-J}^J (jn^{-1/4})^2 &\sum_{j=-J}^J (jn^{-1/4})^3 \\
\sum_{j=-J}^J (jn^{-1/4})^2  &\sum_{j=-J}^J (jn^{-1/4})^4 &\sum_{j=-J}^J (jn^{-1/4})^5\\
\sum_{j=-J}^J (jn^{-1/4})^3 &\sum_{j=-J}^J (jn^{-1/4})^5 &\sum_{j=-J}^J (jn^{-1/4})^6 \end{array} \right)\\
=&K\left(\sum_{j=-J}^J (jn^{-1/4})^4 \sum_{j=-J}^J (jn^{-1/4})^6-\left(\sum_{j=-J}^J (jn^{-1/4})^5\right)^2\right)\\
&-\sum_{j=-J}^J (jn^{-1/4})^2 \left(\sum_{j=-J}^J (jn^{-1/4})^2 \sum_{j=-J}^J (jn^{-1/4})^6-\sum_{j=-J}^J (jn^{-1/4})^3\sum_{j=-J}^J (jn^{-1/4})^5\right)\\
&+\sum_{j=-J}^J (jn^{-1/4})^3 \left(\sum_{j=-J}^J (jn^{-1/4})^2 \sum_{j=-J}^J (jn^{-1/4})^5-\sum_{j=-J}^J (jn^{-1/4})^3\sum_{j=-J}^J (jn^{-1/4})^4\right)\\
=&\; \mathcal{O}(n^{-2}).
\end{align*}

\end{document}